\providecommand{\U}[1]{\protect\rule{.1in}{.1in}}
\numberwithin{equation}{section}
\newtheorem{theorem}{Theorem}[section]
\newtheorem{lemma}[theorem]{Lemma}
\newtheorem{corollary}[theorem]{Corollary}
\newtheorem{proposition}[theorem]{Proposition}
\newtheorem{remark}[theorem]{Remark}
\newtheorem{example}[theorem]{Example}
\def\<{\langle}
\def\>{\rangle}
\def\E{\mathbb{E}}
\def\P{\mathbb{P}}
\def\R{\mathbb{R}}
\def\T{\mathbb{T}}
\def\Z{\mathbb{Z}}
\def\div{{\rm div}}
\begin{document}

\title{A scaling limit for Vlasov equations with electrostatic fluctuations}

\author{Franco Flandoli\thanks{Email: franco.flandoli@sns.it. Scuola Normale Superiore of Pisa, Piazza dei Cavalieri 7, 56124 Pisa, Italy}
\qquad Dejun Luo\thanks{Email: luodj@amss.ac.cn. SKLMS, Academy of Mathematics and Systems Science, Chinese Academy of Sciences, Beijing 100190, China and School of Mathematical Sciences, University of Chinese Academy of
Sciences, Beijing 100049, China}}
\maketitle

\vspace{-15pt}

\begin{abstract}
We consider a Vlasov equation for a plasma with a given constant magnetic field, and introduce a white noise perturbation of the electric field in the electrostatic approximation, with a discussion of the motivations of such random perturbation. We prove that diffusion in velocity emerges in a suitable scaling limit of the noise, and also discuss the physical relevance of this result.
\end{abstract}

\textbf{Keywords:} Vlasov equation, plasma, electrostatic fluctuation, transport noise, scaling limit

\section{Introduction}

Consider the Vlasov-type equation
  $$\partial_t f + v\cdot\nabla_x f+ (E_\rho + B v\times \bm{e}_3) \cdot\nabla_v f = 0,$$
where $f=f(x,v,t)$ is the density of plasma particles in the phase space $\T^3\times \R^3$, $\rho(x,t)= \int_{\R^3} f(x,v,t) \, dv$; $B\in \R$ is a constant, $\bm{e}_3= (0,0,1)^\ast$ and $v\times \bm{e}_3= (v_2,-v_1, 0)^\ast$; moreover,
  $$E_{\rho(t)} (x) = \int_{\T^3} \nabla G(x-y) \rho(y,t)\, dy, $$
$G$ being the Green function on $\T^3$, $G(x)\approx -C |x|^{-1}$ for $x$ near the origin. $E_\rho$ stands for the electric field associated to the particle density, while $B \bm{e}_3$ represents some fixed external magnetic field.

Here we briefly discuss how to introduce noise in the above equation, see Section \ref{sec-particular-noise} for a more detailed physical explanation. Assume that $f= \bar f + \tilde f$, where $\bar f$ (resp. $\tilde f$) represents the large (resp. small) scale part of the density; accordingly, we have $\rho= \int \bar f\,d v + \int \tilde f\,d v=: \bar \rho + \tilde \rho$, and the equation for $\bar f$ becomes
  $$\partial_t \bar f + v\cdot\nabla_x \bar f+ (E_{\bar \rho} + B v\times \bm{e}_3) \cdot\nabla_v \bar f + E_{\tilde \rho}\cdot\nabla_v \bar f = 0. $$
Motivated by stochastic model reduction, we approximate the electric field $E_{\tilde \rho}$ corresponding to small scale density by some Gaussian random field
  $$W= W(x,t) = \sum_{k} \sigma_k(x) W^k_t, $$
where $\{\sigma_k \}_k$ are some smooth gradient vector fields on $\T^3$ and $\{W^k \}_k$ are independent standard Brownian motions. We stress a major difference from recent works mentioned below, where the noise is often assumed to be divergence-free in the space variable, while here it is of gradient type since it arises from small scale fluctuations of electric field; a key point is that the noise is divergence-free in the velocity variable.

We can write the above equation as a stochastic partial differential equation (SPDE) with transport noise:
  $$d \bar f + v\cdot\nabla_x \bar f\, dt + (E_{\bar \rho} + B v\times \bm{e}_3) \cdot\nabla_v \bar f \, dt + \sum_k \sigma_k \cdot\nabla_v \bar f \circ d W^k_t =0.$$
Here, $\circ d$ means the Stratonovich stochastic differential, a choice which is motivated by the Wong--Zakai principle (cf. \cite{BF95}). This is the equation treated below and we shall omit the bar in $ \bar f$ to simplify notation.

Technically speaking, we have been motivated, to introduce the above noise and
investigate the scaling limit, on one side by a series of recent works on a
new scaling limit based on the It\^{o}-Stratonovich duality (see e.g.
\cite{FGL21a, FGL22, Gal, Luo21} and more references at the beginning of Section \ref{subsec-scaling-limit}), and on the other side by the work
\cite{BardosBesse} which contains two very interesting scaling limit results
related to turbulent perturbations. Our approach and assumptions are
technically very different and therefore complement the study of
\cite{BardosBesse} by a different viewpoint. From the physical side, moreover,
we have been strongly motivated by the increasing knowledge and understanding
of turbulence in plasma. As an example of works discussing topics and ideas
which may connect with our noise model and limit result, let us mention
\cite{Hutch1, Hutch2} which insist on coherent dynamical structures;
our noise is composed of random structures mimicking that intuition. We also mention
\cite{Ewart et al, Nastac et al} which discuss in particular the role
that turbulence may have in restoring a form of chaoticity typical of
molecular collisions, in a framework like non-collisional plasma that a priori
could then miss any statistical mechanics approach. In Subsection \ref{subsect collisions} we shortly discuss this issue in more detail.

\subsection{Weak existence of stochastic Vlasov equations}

We will write the time variable $t$ as a subscript to save space. As discussed in the last part, we consider the stochastic Vlasov equation
  \begin{equation}\label{stoch-Vlasov}
  d f + v\cdot\nabla_x f\, dt + (E_{\rho} + B v\times \bm{e}_3) \cdot\nabla_v f \, dt + \sum_k \sigma_k \cdot\nabla_v f \circ d W^k_t =0,
  \end{equation}
where $\rho_t(x)= \int f_t(x,v)\, dv$ and
  $$E_{\rho_t}(x) = (\nabla G\ast \rho_t)(x)= \int_{\T^3} \nabla G(x-y) \rho_t(y)\, dy. $$

For simplicity, we assume the noise $W(x,t)= \sum_k \sigma_k(x) W^k_t$ in \eqref{stoch-Vlasov} is translation-invariant and smooth in the space variable, namely, the covariance function
  $$Q(x-y)= \sum_k \sigma_k(x)\otimes \sigma_k(y)$$
satisfies $Q\in C_b^\infty(\T^3)$. Moreover, we can assume that
  \begin{equation}\label{covariance}
  Q(0)= \sum_k \sigma_k(x)\otimes \sigma_k(x) = 2\kappa I_3
  \end{equation}
for some $\kappa>0$, $I_3$ being the $3\times 3$ unit matrix; see Example \ref{exa-noises} below for mathematical construction of such noises. Under these assumptions, equation \eqref{stoch-Vlasov} can be written in It\^o form as
  \begin{equation}\label{stoch-Vlasov-Ito}
  d f + v\cdot\nabla_x f\, dt + (E_{\rho} + B v\times \bm{e}_3) \cdot\nabla_v f \, dt + \sum_k \sigma_k \cdot\nabla_v f \, d W^k_t = \kappa \Delta_v f\, dt,
  \end{equation}
where $\kappa \Delta_v$ is the It\^o-Stratonovich corrector; we remark that this operator is a ``fake'' dissipation term since it is exactly cancelled by the martingale part in energy estimates.  We endow the above equation with an initial condition $f_0\in L^1(\T^3\times \R^3,\R_+)$, together with further regularity to be specified later. The meaning of weak solution to \eqref{stoch-Vlasov-Ito} is defined as usual against test functions $\phi\in C_c^\infty(\T^3\times \R^3)$. The main result of this part is the existence of weak solutions to \eqref{stoch-Vlasov-Ito} which will be proved in Section \ref{sec-proof-existence}.

\begin{theorem}[Existence]\label{thm-existence}
Assume that the initial condition $f_0\in (L^1\cap L^3)(\T^3\times \R^3,\R_+)$ has finite kinetic energy
  $$\mathcal K(f_0):= \int |v|^2 f_0(x,v)\, dx dv<+\infty.$$
Then the stochastic Vlasov equation \eqref{stoch-Vlasov-Ito} admits a weak solution $f\in L^\infty\big(0,T; (L^1\cap L^3)(\T^3\times \R^3,\R_+) \big)$ such that
\begin{itemize}
\item[\rm(a)] for any $p\in [1,3]$, $\P$-a.s. $\|f_t \|_{L^p_{x,v}} \le \|f_0 \|_{L^p_{x,v}}$ for all $t\in [0,T]$;
\item[\rm(b)] for all $q\ge 1$, it holds $\E \mathcal K(f_t)^{2q} \le C_{q,T}<\infty$  for all $t\in [0,T]$.
\end{itemize}
\end{theorem}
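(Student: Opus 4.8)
The plan is to prove existence by a regularisation-and-compactness scheme: mollify the singular Coulomb kernel, solve the resulting SPDE with smooth coefficients, derive the bounds (a)--(b) uniformly in the regularisation parameter, and finally pass to the limit. Concretely, I would replace $E_\rho=\nabla G\ast\rho$ by $E^n_\rho=\nabla G\ast(\rho\ast\theta_n)$ for a standard mollifier $\theta_n$, so that the force field $E^n_\rho(x)$ is smooth, bounded and Lipschitz in $x$ and depends continuously on $f$ in a weak topology. For fixed $n$ the phase-space drift $(v,\,E^n_\rho(x)+Bv\times\bm e_3)$ has at most linear growth in $v$ and the noise coefficients $\sigma_k(x)$ are smooth, so the regularised equation is solvable: one may either run the (common-noise) McKean--Vlasov characteristics
$$dX_t=V_t\,dt,\qquad dV_t=\big(E^n_{\rho_t}(X_t)+BV_t\times\bm e_3\big)\,dt+\sum_k\sigma_k(X_t)\circ dW^k_t,$$
with $\rho_t$ the conditional spatial density given $W$, whose well-posedness follows from a fixed point since the mollified interaction is Lipschitz; or, equivalently, close a fixed-point argument at the level of the linear transport SPDE $f\mapsto f[\rho]$. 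This produces a solution $f^n$ of the regularised problem.

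Because $E^n_\rho$ depends only on $x$, each of the operators $v\cdot\nabla_x$, $(E^n_\rho+Bv\times\bm e_3)\cdot\nabla_v$ and $\sigma_k\cdot\nabla_v$ is divergence-free in phase space. In Stratonovich form the regularised equation is therefore a pure transport equation along a volume-preserving stochastic flow, whence $\|f^n_t\|_{L^p_{x,v}}=\|f_0\|_{L^p_{x,v}}$ pathwise for every $p\in[1,3]$; this yields (a) at the regularised level, and after the limit the stated inequality by weak lower semicontinuity of the norm.

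For the energy I would test the It\^o equation against $|v|^2$. The term $v\cdot\nabla_x$ vanishes after integration by parts in $x$, the magnetic term drops out because $v\cdot(v\times\bm e_3)=0$, and $\kappa\Delta_v$ contributes $6\kappa\|f_0\|_{L^1}$. Integrating the equation in $v$ shows that $\rho$ solves the noise-free continuity equation $d\rho_t=-\nabla_x\cdot j_t\,dt$ with $j_t=\int v f^n_t\,dv$, so the electric contribution $2\int v\cdot E^n_\rho f^n\,dxdv$ is minus the time derivative of the potential energy $\|E^n_{\rho_t}\|_{L^2}^2$. This gives the clean balance
$$d\big(\mathcal K(f^n_t)+\|E^n_{\rho_t}\|_{L^2}^2\big)=6\kappa\|f_0\|_{L^1}\,dt+dM^n_t,\qquad dM^n_t=2\sum_k\Big(\int v\cdot\sigma_k f^n_t\,dxdv\Big)dW^k_t,$$
with both left-hand terms nonnegative. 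Writing $g_t(x)=\int v f^n_t\,dv$ and using $\sum_k\sigma_k(x)\otimes\sigma_k(y)=Q(x-y)\in C_b^\infty$, the quadratic variation is $d\langle M^n\rangle_t=4\iint g_t(x)^\ast Q(x-y)g_t(y)\,dxdy\,dt\le C\,\mathcal K(f^n_t)\,dt$ by Cauchy--Schwarz and $\|f^n_t\|_{L^1}=\|f_0\|_{L^1}$. Applying It\^o's formula to $\mathcal E_t^{2q}$ with $\mathcal E_t:=\mathcal K(f^n_t)+\|E^n_{\rho_t}\|_{L^2}^2$, taking expectations to kill the martingale, using $\mathcal K\le\mathcal E$ and Young's inequality on the lower powers, I obtain $\frac{d}{dt}\,\E\mathcal E_t^{2q}\le C_q\big(1+\E\mathcal E_t^{2q}\big)$; Gronwall then gives $\E\mathcal E_t^{2q}\le C_{q,T}$ uniformly in $n$, which is (b).

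Finally I would let $n\to\infty$. The uniform bounds give weak-$\ast$ compactness of $f^n$ in $L^\infty_t(L^1\cap L^3)$, the equation controls the time increments, and so the laws of $(f^n,W)$ are tight in a suitable path space; Skorokhod representation produces a.s.\ convergent versions on a new probability space. The main obstacle is the passage to the limit in the nonlinear product $E^n_{\rho^n}\cdot\nabla_v f^n$, for which weak convergence of $f^n$ alone is insufficient. Here I would invoke a velocity-averaging lemma together with the interpolation bound $\|\rho^n_t\|_{L^{3/2}_x}\le C\,\|f^n_t\|_{L^3_{x,v}}^{1/2}\,\mathcal K(f^n_t)^{1/2}$ --- precisely the reason for assuming $f_0\in L^3$ with finite energy --- to upgrade weak to strong compactness of $\rho^n$ in $L^2_{t,x}$; since $\rho^n\in L^{3/2}_x$ gives $E^n_{\rho^n}\in L^3_x$, the strong convergence $E^n_{\rho^n}\to E_\rho$ paired against the weak $L^3$ limit of $f^n$ identifies the nonlinear term. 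The remaining linear terms, including the stochastic integral $\sum_k\sigma_k\cdot\nabla_v f^n\,dW^k$, pass to the limit by standard arguments for transport-noise SPDEs, and the limit $f$ is a weak solution inheriting (a)--(b) from the uniform estimates.
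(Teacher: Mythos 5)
Your proposal follows the same overall architecture as the paper: regularize the singular kernel, solve the regularized SPDE via stochastic characteristics (the paper invokes \cite{CogFla} for exactly this in Proposition \ref{prop-well-posedness-approx}), obtain pathwise conservation of $\|f\|_{L^p_{x,v}}$ from the volume-preserving flow, and close the moment bound (b) through a kinetic-plus-potential energy balance whose martingale part has quadratic variation $\lesssim \int_0^t \mathcal K(f_s)\,ds$, followed by Gronwall. Two points in your energy step need care. First, you derive the balance at the PDE level using the (correct and useful) observation that $\rho$ solves the noise-free continuity equation, whereas the paper computes along characteristics and symmetrizes in $(x,v)\leftrightarrow(y,w)$ (Proposition \ref{prop-kinetic}); both work, but your identity ``electric work $=-\frac{d}{dt}\|E^n_{\rho_t}\|_{L^2}^2$'' does not close as stated: the force in your equation is singly mollified, $\nabla G\ast\theta_n\ast\rho$, while differentiating $\|E^n_{\rho_t}\|_{L^2}^2$ in time produces the doubly mollified force $\nabla G\ast\theta_n\ast\theta_n\ast\rho$. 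You must either put the doubly mollified field into the equation, or define the potential energy with kernel $G\ast\theta_n$ (taking $\widehat{\theta}_n\ge 0$ to keep it bounded below), which is in effect what the paper's $\mathcal V_\delta$ does. Second, taking expectations ``to kill the martingale'' requires a localization: a priori the integral is only a local martingale, and the paper uses stopping times $\tau_n=\inf\{t:\mathcal K(f^\delta_t)\ge n\}$ plus Fatou.

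The genuine gap is the identification of the nonlinear limit. You invoke ``a velocity-averaging lemma'' to get strong compactness of $\rho^n$ in $L^2_{t,x}$, but classical averaging lemmas require the source in $\partial_t f+v\cdot\nabla_x f=(\dots)$ to be a function (or a $(x,v)$-derivative of a function) in some $L^p_{t,x,v}$; here the equation carries the martingale forcing $\sum_k\sigma_k\cdot\nabla_v f^n\,dW^k$, which pathwise is not absolutely continuous in time, so the deterministic lemma cannot be applied $\omega$-by-$\omega$, and stochastic versions of averaging lemmas are specialized tools whose hypotheses you would have to state and verify; as written, the crucial step is a black box. The paper avoids strong compactness of the densities altogether: it proves the increment bound $\E\|f^\delta_t-f^\delta_s\|_{H^{-4}_{x,v}}^4\le C|t-s|^2$ (Lemma \ref{lem-Holder}; this is where the $L^3$ hypothesis enters, through Lemma \ref{app-lem-product}(ii) with $p=3$), gets tightness in $C\big([0,T],H^{-\epsilon}_{x,v,loc}\big)$ by Simon's theorem, and after Skorokhod identifies the nonlinearity by duality: the strong convergence $\tilde f^n-\tilde f\to 0$ in $H^{-\epsilon}_{x,v,loc}$ is paired against a uniform $H^{\epsilon}$ bound on $\nabla_v\phi\cdot E_{\tilde\rho^n}$, via $\|E_{\tilde\rho^n}\|_{H^{2\epsilon}_x}\lesssim\|\tilde\rho^n\|_{L^{7/5}_x}\lesssim\|\tilde f^n\|_{L^2}^{4/7}\mathcal K(\tilde f^n)^{3/7}$ (Lemma \ref{app-lem-density} with $p=2$). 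If you wish to keep your strong-times-weak structure, there is a repair that uses your own observation and needs no averaging lemma: since $\partial_t\rho^n=-\div_x j^n$ holds pathwise with no noise and no singular coefficient, the bounds $\rho^n\in L^\infty_t L^{3/2}_x$ (your interpolation) and $j^n\in L^\infty_t L^1_x$ (Cauchy--Schwarz with the energy) give, by Aubin--Lions/Simon, strong compactness of $\rho^n$ in $C\big([0,T],H^{-1/2-\epsilon}_x\big)$; the kernel $\nabla G$ gains one derivative, so $E_{\rho^n}$ converges strongly in $C\big([0,T],L^2_x\big)$, which is all your product argument requires.
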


The $L^3$-integrability of $f_0$ looks a little strange; it implies the existence of a sequence of approximate solutions bounded in $ L^3(\T^3\times \R^3)$, which will be used in \eqref{eq-L-3} in the proof of Lemma \ref{lem-Holder} below.

\subsection{Scaling limit for stochastic Vlasov equations}\label{subsec-scaling-limit}

In recent years, there have been intensive studies on the It\^o-Stratonovich diffusion limit, starting from the work \cite{Gal} which shows that, under a suitable rescaling of the space covariance of noises, a sequence of stochastic linear transport equations on the torus converge to the deterministic heat equation. This result was soon extended in \cite{FGL21a} to the vorticity form of stochastic 2D Euler equation with transport noise, where the limit equation becomes the deterministic 2D Navier-Stokes equation; see also \cite{Luo21} for similar result on the stochastic 2D inviscid Boussinesq system. The paper \cite{FGL22} extends the results of \cite{Gal} to stochastic transport-diffusion equations on bounded domains with Dirichlet boundary condition, while \cite{Hua25} and \cite{LXZ25} deal with the cases of compact manifold and full space $\R^d$, respectively. We refer to \cite{CL23, Agr24a, QS24, LiuLuo24} for works treating various models in different setting, and to \cite{FGL24, ZhaHua24} for quantitative estimates on convergence rates in the scaling limits. We also mention the recent papers \cite{FlaLuo24, BFL24, BL24} dealing with some 3D fluid models where a stretching part of noise is involved. See the lecture notes \cite{FL23} for more comprehensive introductions to this field.

Motivated by the above works, in this part we show that, under a similar scaling of the noise in \eqref{stoch-Vlasov-Ito}, the stochastic Vlasov equations converge weakly to the deterministic Vlasov equation with an extra dissipation term:
  \begin{equation}\label{limit-Vlasov}
  \partial_t \bar f + v\cdot\nabla_x \bar f+ (E_{\bar\rho} + B v\times \bm{e}_3) \cdot\nabla_v \bar f = \kappa\Delta_v \bar f, \quad \bar f|_{t=0}= f_0,
  \end{equation}
where $\bar\rho_t (x)= \int \bar f_t(x,v)\, dv$.

To this end, assume we are given a sequence of smooth noise
  $$W_N= W_N(x,t) = \sum_k \sigma_k^N(x) W^k_t, $$
with covariance functions
  \begin{equation}\label{covariance-N}
  Q_N(x-y)= \sum_k \sigma_k^N(x)\otimes \sigma_k^N(y), \quad x,y\in \T^3
  \end{equation}
satisfying
  \begin{equation}\label{condition-covariance}
  Q_N(0)= 2\kappa I_3\ (\forall\, N\ge 1) \quad \mbox{and} \quad \lim_{N\to \infty} \|Q_N \|_{L^r}= 0
  \end{equation}
for all $r\in [1,\infty)$. We will provide in Example \ref{exa-noises} an explicit choice of noises with the above properties, see also Section \ref{sec-particular-noise} for more physical discussions. Note that we can regard $Q_N$ as an operator acting on vector fields $A:\T^3\to \R^3$ defined by
  $$(Q_N A)(x)= (Q_N\ast A)(x)= \int_{\T^3} Q_N(x-y) A(y)\, dy, \quad x\in \T^3. $$

Here is the second main result of this paper.

\begin{theorem}[Scaling limit]\label{thm-limit}
Assume that the initial condition $f_0$ belongs to $(L^1\cap L^3)(\T^3\times \R^3)$ with finite kinetic energy
  $$\mathcal K(f_0)= \int_{\T^3\times \R^3} |v|^2 f_0(x,v)\, dx dv<+\infty. $$
Let $W_N$ be a sequence of smooth noises whose covariance functions $Q_N$ verify the condition \eqref{condition-covariance}. For any $N\ge 1$, let $f^N$ be a weak solution to the stochastic Vlasov equation
  \begin{equation}\label{stoch-Vlasov-Ito-N}
  d f^N + v\cdot\nabla_x f^N\, dt + (E_{\rho^N} + B v\times \bm{e}_3) \cdot\nabla_v f^N \, dt + \sum_k \sigma^N_k \cdot\nabla_v f^N \, d W^k_t = \kappa \Delta_v f^N\, dt
  \end{equation}
with initial condition $f^N_0= f_0$. Then any convergent subsequence of $\{f^N\}_N$ converge weakly, in the topology of $C\big([0,T], H^{-\epsilon }_{x,v, loc} \big)$, to weak solutions of \eqref{limit-Vlasov}.
\end{theorem}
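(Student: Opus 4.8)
The plan is to follow the compactness-plus-identification scheme underlying the It\^o--Stratonovich diffusion limits of \cite{Gal, FGL21a, FGL22}. First I would collect the uniform-in-$N$ bounds: by Theorem \ref{thm-existence}(a)--(b) the solutions satisfy $\|f^N_t\|_{L^p_{x,v}}\le\|f_0\|_{L^p_{x,v}}$ for $p\in[1,3]$ and $\E\,\mathcal K(f^N_t)^{2q}\le C_{q,T}$, and these are independent of the noise because the corrector $\kappa\Delta_v$ in \eqref{stoch-Vlasov-Ito-N} is pinned by the normalisation $Q_N(0)=2\kappa I_3$ in \eqref{condition-covariance}. The heuristic is then exactly that of the quoted references: the $N$-independent corrector $\kappa\Delta_v$ survives, while the martingale part is killed by the decorrelation $\|Q_N\|_{L^r}\to0$, so that any limit point solves the deterministic equation \eqref{limit-Vlasov}.

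In the tested formulation, for $\phi\in C_c^\infty(\T^3\times\R^3)$,
\begin{equation*}
\langle f^N_t,\phi\rangle=\langle f_0,\phi\rangle+\int_0^t\big\langle f^N_s,\,v\cdot\nabla_x\phi+(E_{\rho^N_s}+B\,v\times\bm e_3)\cdot\nabla_v\phi+\kappa\Delta_v\phi\big\rangle\,ds+M^N_t(\phi),
\end{equation*}
with $M^N_t(\phi)=-\sum_k\int_0^t\langle f^N_s,\sigma^N_k\cdot\nabla_v\phi\rangle\,dW^k_s$; integration by parts produces no extra terms since $v\cdot\nabla_x$, $B\,v\times\bm e_3$, the $x$-dependent field $E_{\rho^N}$ and each $\sigma^N_k$ are divergence-free in the relevant variable. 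The key computation is the quadratic variation, which with $z=(x,v)$ and $\sum_k\sigma^N_k(x)\otimes\sigma^N_k(x')=Q_N(x-x')$ reads
\begin{equation*}
[M^N(\phi)]_t=\int_0^t\!\!\iint f^N_s(z)f^N_s(z')\,\nabla_v\phi(z)^\ast Q_N(x-x')\nabla_v\phi(z')\,dz\,dz'\,ds.
\end{equation*}
Since $\phi$ is compactly supported, the $v$-marginals of $f^N_s$ over the bounded $v$-support of $\phi$ are uniformly bounded in $L^1_x\cap L^3_x$, and a Young-inequality estimate controls the bracket by $C\,\|Q_N\|_{L^{3/2}}\to0$. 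Hence $\E\,[M^N(\phi)]_t\to0$ and the noise disappears.

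Next I would prove tightness of the laws of $f^N$ in $C([0,T],H^{-\epsilon}_{x,v,loc})$. At fixed time the uniform $L^3_{x,v}$ bound gives, via Rellich, precompactness in $H^{-\epsilon}$ on bounded regions, while $\E\,\mathcal K(f^N_t)^{2q}\le C_{q,T}$ controls the mass at large $|v|$ and so handles the non-compactness of $\R^3_v$; for time regularity the drift terms are Lipschitz in $t$ and the martingale is H\"older in $t$ by Burkholder--Davis--Gundy applied to the bracket above, so a Kolmogorov-type criterion yields tightness. Passing to a subsequence and invoking Skorokhod representation, I would realise the $f^N$ on one probability space with $f^N\to\bar f$ almost surely in $C([0,T],H^{-\epsilon}_{x,v,loc})$.

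The main obstacle is identifying the limit of the nonlinear, nonlocal term $\langle f^N_s,E_{\rho^N_s}\cdot\nabla_v\phi\rangle$, a product of two only weakly convergent factors. The resolution is the smoothing of the convolution: since $\rho^N$ is bounded in $L^3_x$, the map $\rho\mapsto E_\rho=\nabla G\ast\rho$ is bounded from $L^3_x$ into $W^{1,3}_x$ and hence compact into $L^3_x$ by Rellich, so weak convergence $\rho^N\rightharpoonup\bar\rho$ upgrades to strong convergence $E_{\rho^N}\to E_{\bar\rho}$ in $L^3_x$. The marginal convergence $\rho^N\rightharpoonup\bar\rho$, with $\bar\rho_t=\int\bar f_t\,dv$, I would derive from the a.s. convergence of $f^N$ together with the kinetic-energy bound preventing loss of mass as $|v|\to\infty$. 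Writing the difference as $\langle f^N-\bar f,E_{\bar\rho}\cdot\nabla_v\phi\rangle+\langle f^N,(E_{\rho^N}-E_{\bar\rho})\cdot\nabla_v\phi\rangle$, the first term vanishes because $E_{\bar\rho}\cdot\nabla_v\phi\in H^{\epsilon}_{x,v}$ (as $E_{\bar\rho}\in W^{1,3}_x\hookrightarrow H^{\epsilon}_x$) pairs with the $H^{-\epsilon}$-convergence, and the second vanishes by the strong $L^3_x$ convergence of $E_{\rho^N}$ against the uniformly $L^{3/2}_x$-bounded marginal of $f^N$. The transport, magnetic and Laplacian terms pass to the limit directly against the smooth $\phi$, and combined with $M^N(\phi)\to0$ this shows every subsequential limit $\bar f$ solves the weak form of \eqref{limit-Vlasov}.
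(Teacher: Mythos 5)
Your overall scheme coincides with the paper's: uniform bounds inherited from Theorem \ref{thm-existence}, tightness in $C\big([0,T],H^{-\epsilon}_{x,v,loc}\big)$ via time-regularity estimates in a negative Sobolev space (the paper reuses the computations of Lemma \ref{lem-Holder} together with Simon's theorem), Skorokhod representation, and identification of the limit with the noise killed by $\|Q_N\|_{L^r}\to 0$. Your quadratic-variation estimate is correct and is essentially the paper's argument with different exponents: the paper bounds the marginal $A^N_s$ in $L^{7/5}_x$ via Lemma \ref{app-lem-density} (with $p=2$) and pairs it against $\|Q_N\|_{L^{7/4}}$, while you exploit the compact $v$-support of $\nabla_v\phi$ and Lemma \ref{app-lem-compact} to bound the cut-off marginal in $L^{3/2}_x$ and pair it against $\|Q_N\|_{L^{3/2}}$; both are admissible since \eqref{condition-covariance} gives decay in every $L^r$, $r<\infty$, and yours is in fact slightly simpler because it avoids the kinetic energy at that point.

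The genuine gap is in your identification of the nonlinear term: the claim that $\rho^N$ is bounded in $L^3_x$ is false, and everything built on it ($E_{\rho^N}$ bounded in $W^{1,3}_x$, compactness into $L^3_x$, $E_{\bar\rho}\in W^{1,3}_x\hookrightarrow H^{\epsilon}_x$) fails as stated. The density $\rho^N_t(x)=\int_{\R^3}f^N_t(x,v)\,dv$ is a marginal over the \emph{unbounded} velocity space, so an $L^3_{x,v}$ bound alone does not control it; combining $\|f^N_t\|_{L^1\cap L^3}$ with the kinetic energy through Lemma \ref{app-lem-density} (with $N=3$, $p=3$, hence $r(3)=3/2$) yields only $\rho^N_t\in L^{3/2}_x$, and even $f\in L^\infty_{x,v}$ would give no more than $L^{5/3}_x$. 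The step is repairable with corrected exponents: $\rho^N$ bounded in $L^{3/2}_x$ gives $E_{\rho^N}$ bounded in $W^{1,3/2}_x$, which embeds into $H^{1/2}_x$ and compactly into $L^q_x$ for every $q<3$; one then pairs $\|E_{\rho^N}-E_{\bar\rho}\|_{L^{3/2}_x}\to 0$ against the cut-off marginal $A^N_s$, which \emph{is} bounded in $L^3_x$ by Lemma \ref{app-lem-compact}, and for your first term one gets $E_{\bar\rho}\cdot\nabla_v\phi\in H^{\epsilon}_{x,v}$ for small $\epsilon$ from $E_{\bar\rho}\in H^{1/2}_x$ (or via the paper's route $L^{7/5}_x\subset H^{-9/14}_x$, whence $\|E_{\bar\rho}\|_{H^{2\epsilon}_x}\lesssim\|\bar\rho\|_{L^{7/5}_x}$, together with the product estimate used for $J_n^2$). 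With these corrections your splitting — strong convergence of the field against the weakly convergent $f^N$ — is a workable alternative to the paper's, which instead places the approximating field $E_{\rho^N}$ in the term containing $f^N-\bar f$ and never needs strong convergence of the electric field.
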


We remark that, by Theorem \ref{thm-existence}, the approximate solutions $\{f^N\}_N$ satisfy the properties (a) and (b) therein; hence, the weak limits of $\{f^N\}_N$ belong to the same regularity class. However, we are unable to prove uniqueness of solutions to \eqref{limit-Vlasov} under such regularity conditions. Therefore, we cannot claim that the whole sequence $\{f^N\}_N$ is weakly convergent.

This paper is organized as follows. After some preparations in Section 2, we provide in Section \ref{sec-proof-existence} the proof of Theorem \ref{thm-existence} which shows the existence of weak solutions to the stochastic Vlasov equation \eqref{stoch-Vlasov-Ito}.
Then we prove the above scaling limit result (Theorem \ref{thm-limit}) in Section \ref{sec-proof-limit}. Since the idea is very close to the existence proof, we omit the details and only show why the martingale part vanishes in the limit. We provide in Section \ref{sec-particular-noise} detailed discussions on the choice of physically well motivated noises satisfying the condition \eqref{condition-covariance}.

\section{Useful lemmas and example of noises}

In the proof below we will use some classical lemmas which can be found in \cite{Horst82, HH84}, where the results are proved on the phase space $\R^N \times \R^N$ with $N\ge 2$. In the sequel, $\omega_N$ is the surface area of $N-1$ dimensional sphere, and $\lambda^N$ is the Lebesgue measure on $\R^N$.

The first result is taken from \cite[Lemma 3.2]{HH84}, see also \cite[Lemma 5.5]{Horst82}.

\begin{lemma}\label{app-lem-density}
Assume $1<p\le \infty$, $f\in L^p(\R^{2N})$, $f\ge 0$, $\mathcal K(f):= \int_{\R^{2N}} |v|^2 f(x,v)\, dx dv<\infty$. Let $\rho(x)= \int f(x,v)\, dv$ and $r= r(p):= (2p +N(p-1))/(2+ N(p-1))$ if $p<\infty$, and $r=r(p)= (2+N)/N$ if $p=\infty$. Then $\rho\in L^r(\R^N)$ and
  $$\|\rho \|_{L^r} \le C \|f \|_{L^p}^{2p'/(N+2p')} \mathcal K(f)^{N/(N+2p')} $$
with $C= 1+(\omega_N/N)^{1/p'}$.
\end{lemma}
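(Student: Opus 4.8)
The plan is to establish a pointwise (in $x$) interpolation bound for $\rho$ and then integrate it. Fixing $x$ and a threshold $R>0$ to be chosen, I split
$$\rho(x) = \int_{|v|\le R} f(x,v)\,dv + \int_{|v|> R} f(x,v)\,dv.$$
On the low-velocity piece I apply H\"older's inequality in $v$ over the ball $B_R\subset\R^N$, whose volume is $\omega_N R^N/N$, to get
$$\int_{|v|\le R} f(x,v)\,dv \le \Big(\tfrac{\omega_N}{N}\Big)^{1/p'} R^{N/p'}\, \|f(x,\cdot)\|_{L^p_v}.$$
On the high-velocity piece I insert the factor $|v|^2/|v|^2$ and use $|v|>R$ to obtain
$$\int_{|v|> R} f(x,v)\,dv \le R^{-2}\int_{\R^N} |v|^2 f(x,v)\,dv.$$
Writing $g(x):=\|f(x,\cdot)\|_{L^p_v}$ and $h(x):=\int_{\R^N} |v|^2 f(x,v)\,dv$, this yields $\rho(x)\le (\omega_N/N)^{1/p'}R^{N/p'} g(x) + R^{-2} h(x)$ for every $R>0$.

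Next I optimize in $R$ pointwise. Choosing $R=R(x)$ so that the two terms balance, i.e.\ $R^{N/p'+2}=h(x)/g(x)$ when $g(x),h(x)>0$, both terms become equal to $g(x)^{2p'/(N+2p')}h(x)^{N/(N+2p')}$, and therefore
$$\rho(x)\le \Big(1+(\omega_N/N)^{1/p'}\Big)\, g(x)^{2p'/(N+2p')}\, h(x)^{N/(N+2p')},$$
which is exactly the claimed constant $C$. The degenerate cases $g(x)=0$ or $h(x)=0$ are handled by letting $R\to\infty$ or $R\to0$, which force $\rho(x)=0$. Since this is a pointwise inequality valid for a.e.\ $x$, no measurability issue for $R(x)$ arises.

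Finally I raise this bound to the power $r$ and integrate in $x$, obtaining $\|\rho\|_{L^r}^r\le C^r\int_{\R^N} g^{\alpha}h^{\beta}\,dx$ with $\alpha=2p'r/(N+2p')$ and $\beta=Nr/(N+2p')$, and then apply H\"older's inequality to separate the two factors. The exponents are dictated by the two available global quantities $\int g^p\,dx=\|f\|_{L^p}^p$ and $\int h\,dx=\mathcal K(f)$: I use conjugate exponents $s,s'$ fixed by $\alpha s=p$ and $\beta s'=1$. The compatibility condition $1/s+1/s'=1$ is precisely the identity $r(2p'+Np)=p(N+2p')$, and a short computation using $p'=p/(p-1)$ shows this is equivalent to $r=(2p+N(p-1))/(2+N(p-1))$, matching the definition of $r(p)$. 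With these exponents H\"older gives $\int g^{\alpha}h^{\beta}\,dx\le (\|f\|_{L^p}^p)^{\alpha/p}\,\mathcal K(f)^{\beta}$; taking $r$-th roots and using $\alpha/r=2p'/(N+2p')$ and $\beta/r=N/(N+2p')$ produces exactly the stated inequality.

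The case $p=\infty$ is identical with $p'=1$: the low-velocity estimate becomes $\int_{|v|\le R} f\,dv\le \|f(x,\cdot)\|_{L^\infty_v}\,\omega_N R^N/N$, and the exponents reduce to $2/(N+2)$ and $N/(N+2)$, so that $r=(2+N)/N$. I do not expect a genuine obstacle, as this is a classical splitting-and-optimization interpolation; the only real care needed is the exponent bookkeeping in the final H\"older step and the verification that the balancing choice of $R$ reproduces the precise constant $C=1+(\omega_N/N)^{1/p'}$.
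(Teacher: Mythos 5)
The paper does not prove this lemma at all: it is quoted verbatim from the literature (Lemma 3.2 of [HH84], see also Lemma 5.5 of [Horst82]), so there is no internal proof to compare against. Your argument is correct and complete, and it is precisely the classical splitting-and-optimization proof that underlies those references: split the $v$-integral at radius $R$, use H\"older on $\{|v|\le R\}$ and Chebyshev with the weight $|v|^2$ on $\{|v|>R\}$, balance $R^{N/p'}g(x)=R^{-2}h(x)$ pointwise (which reproduces the exact constant $C=1+(\omega_N/N)^{1/p'}$), and then close with the H\"older step in $x$ whose exponent compatibility is exactly the definition of $r(p)$ --- all of which you verify correctly, including the degenerate cases and the $p=\infty$ variant.
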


The next estimate can be proved by H\"older's inequality.

\begin{lemma}\label{app-lem-compact}
Let $p\in[1, \infty]$, $f\in L^p(\R^{2N})$, $K\subset \R^N$ compact, $\rho_K(x):= \int_{K} f(x,v)\, dv$, $x\in \R^N$. Then $\rho_K\in L^p(\R^N)$ and $\|\rho_K \|_{L^p} \le (\lambda^N(K))^{1/p'} \|f \|_{L^p}$.
\end{lemma}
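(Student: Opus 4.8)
The plan is to reduce the estimate to a pointwise (in $x$) application of H\"older's inequality in the velocity variable, followed by integration in $x$. First I would write, for each fixed $x\in\R^N$,
$$\rho_K(x)=\int_K f(x,v)\,dv=\int_{\R^N}\mathbbm{1}_K(v)\,f(x,v)\,dv,$$
and apply H\"older's inequality with conjugate exponents $p$ and $p'$ in the $v$-integral, treating $\mathbbm{1}_K$ as the $L^{p'}$-factor. For $1\le p<\infty$ this produces the pointwise bound
$$|\rho_K(x)|\le\Big(\int_K dv\Big)^{1/p'}\Big(\int_K|f(x,v)|^p\,dv\Big)^{1/p}=(\lambda^N(K))^{1/p'}\Big(\int_K|f(x,v)|^p\,dv\Big)^{1/p},$$
where the compactness of $K$ guarantees $\lambda^N(K)<\infty$, so the constant is finite.

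Next I would raise this inequality to the $p$-th power and integrate in $x$ over $\R^N$, obtaining
$$\int_{\R^N}|\rho_K(x)|^p\,dx\le(\lambda^N(K))^{p/p'}\int_{\R^N}\int_K|f(x,v)|^p\,dv\,dx.$$
Since the integrand $|f|^p$ is nonnegative and measurable, Tonelli's theorem lets me exchange the order of integration and bound the inner $v$-integral over $K$ by the one over all of $\R^N$; hence the double integral is at most $\|f\|_{L^p(\R^{2N})}^p$. Taking $p$-th roots yields the claimed inequality $\|\rho_K\|_{L^p}\le(\lambda^N(K))^{1/p'}\|f\|_{L^p}$.

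Finally I would dispose of the endpoint cases, which are even simpler. For $p=\infty$ one has $p'=1$, and the estimate $|\rho_K(x)|\le\int_K|f(x,v)|\,dv\le\lambda^N(K)\,\|f\|_{L^\infty}$ gives the result directly with $(\lambda^N(K))^{1/p'}=\lambda^N(K)$. For $p=1$ the factor $(\lambda^N(K))^{1/p'}$ equals $1$, and the bound reduces to Tonelli applied to $\int_{\R^N}\int_K|f|\,dv\,dx\le\|f\|_{L^1}$. There is no genuine obstacle in this lemma: the only points deserving a word of care are the measurability of $\rho_K$ (a routine consequence of Fubini--Tonelli applied to the slices of $f$) and the bookkeeping of the conjugate exponent, so that the power of $\lambda^N(K)$ comes out as $1/p'$ uniformly across $p\in[1,\infty]$.
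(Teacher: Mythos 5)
Your proof is correct and is exactly the argument the paper has in mind: the paper gives no details, stating only that the lemma ``can be proved by H\"older's inequality,'' and your pointwise H\"older step in $v$ (with $\mathbbm{1}_K$ as the $L^{p'}$ factor) followed by integration in $x$ via Tonelli is precisely that argument, with the endpoint cases $p=1,\infty$ handled correctly.
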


The following estimates can be found in \cite[Lemma 3.5]{HH84}.

\begin{lemma}\label{app-lem-product}
Let $p\in (1, \infty]$, $f_1, f_2\in L^p(\R^{2N})$, $a\in [0,N)$.

{\rm (i)} If $p<\infty$, $\frac1p + \frac1{r(p)} + \frac{a}N =2 $ and $\mathcal K(|f_2|) <+\infty$, then
  $$\aligned
  & \int_K \int_{\R^{2N}} \frac1{|x-y|^a} |f_1(x,v)|\, |f_2(y,w)| \, dx dv dy dw\\
  &\le C (\lambda^{2N} (K))^{1/p'} \|f_1 \|_{L^p} \|f_2 \|_{L^p}^{2p'/(N+2p')} \mathcal K(|f_2|)^{N/(N+2p')}
  \endaligned $$
for all compact sets $K\subset \R^{2N}$.

{\rm (ii)} If $ \frac2{r(p)} + \frac{a}N =2 $ and $\mathcal K(|f_1|) \vee \mathcal K(|f_2|) <+\infty$, then
  $$\aligned
  & \int_{\R^{2N}} \int_{\R^{2N}} \frac1{|x-y|^a} |f_1(x,v)|\, |f_2(y,w)| \, dx dv dy dw\\
  &\le C \big[\|f_1 \|_{L^p} \|f_2 \|_{L^p} \big]^{2p'/(N+2p')} \big[\mathcal K(|f_1|) \mathcal K(|f_2|) \big]^{N/(N+2p')} .
  \endaligned $$
\end{lemma}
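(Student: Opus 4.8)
The plan is to remove the velocity variables first and reduce both statements to a bilinear Hardy--Littlewood--Sobolev (HLS) inequality for the spatial marginals, the key point being that the singular kernel $|x-y|^{-a}$ does not depend on the velocities $v,w$. Writing $\rho_i(x)=\int_{\R^N}|f_i(x,v)|\,dv$ for the spatial densities, Fubini's theorem turns the left-hand side of (ii) into $\int_{\R^N}\int_{\R^N}|x-y|^{-a}\rho_1(x)\rho_2(y)\,dx\,dy$, and similarly for (i) with $\rho_1$ replaced by the velocity-truncated density coming from $K$. The two engines are then: Lemma \ref{app-lem-density}, which places each marginal with finite kinetic energy into $L^{r(p)}(\R^N)$ with the interpolation bound $\|\rho_i\|_{L^{r(p)}}\le C\|f_i\|_{L^p}^{\theta}\mathcal K(|f_i|)^{1-\theta}$, where $\theta=2p'/(N+2p')$ and $1-\theta=N/(N+2p')$; and the classical bilinear HLS inequality $\int\int|x-y|^{-a}g(x)h(y)\,dx\,dy\le C\|g\|_{L^{r_1}}\|h\|_{L^{r_2}}$, valid for $a\in(0,N)$ and exponents $r_1,r_2\in(1,\infty)$ satisfying the scaling relation $\frac1{r_1}+\frac1{r_2}+\frac aN=2$. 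The construction is arranged so that this scaling relation coincides exactly with the hypotheses imposed on $r(p)$ and $a$ in (i) and (ii).

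For (ii) I would take $r_1=r_2=r(p)$: the hypothesis $\frac{2}{r(p)}+\frac aN=2$ is precisely the HLS scaling relation, and it forces $r(p)=2/(2-a/N)\in(1,2)$, which lies in the admissible range. Applying bilinear HLS gives $C\|\rho_1\|_{L^{r(p)}}\|\rho_2\|_{L^{r(p)}}$, and then Lemma \ref{app-lem-density} applied to $|f_1|$ and $|f_2|$ (both having finite kinetic energy) yields the product form $C[\|f_1\|_{L^p}\|f_2\|_{L^p}]^{2p'/(N+2p')}[\mathcal K(|f_1|)\mathcal K(|f_2|)]^{N/(N+2p')}$, which is the claim. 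For (i) only $f_2$ has finite kinetic energy, so I would treat the two factors asymmetrically: integrate out $w$ and apply Lemma \ref{app-lem-density} to obtain $\rho_2\in L^{r(p)}$ as before, but for the $f_1$-factor, which is integrated over the compact set $K$, I would integrate out the velocity by Hölder's inequality over $K$ exactly as in the proof of Lemma \ref{app-lem-compact}; this produces a velocity-truncated density in $L^p(\R^N)$ controlled by $(\lambda^{2N}(K))^{1/p'}\|f_1\|_{L^p}$, hence the power-one appearance of $\|f_1\|_{L^p}$ and the finite-measure prefactor. Choosing $r_1=p$ and $r_2=r(p)$ in bilinear HLS --- admissible since the hypothesis $\frac1p+\frac1{r(p)}+\frac aN=2$ is again the scaling relation and $p,r(p)\in(1,\infty)$ --- then assembles the stated estimate.

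The main obstacles are bookkeeping rather than conceptual. First, one must verify that the exponents feeding into bilinear HLS genuinely lie in the open range $(1,\infty)$ and satisfy the scaling relation; here the identities $r(p)>1\Leftrightarrow p>1$ and $\frac1{r(p)}=\frac{2+N(p-1)}{2p+N(p-1)}$ must be combined with the stated hypotheses, and the degenerate case $a=0$ (where the kernel is constant and no HLS is needed, the estimate following directly from Fubini together with Lemmas \ref{app-lem-density} and \ref{app-lem-compact}) should be separated out. Second, and most delicately, in (i) the extraction of the finite-measure prefactor from the compact set $K\subset\R^{2N}$ requires care: the quantity naturally produced by Hölder in the velocity direction is a measure of the velocity section of $K$, and identifying it with the stated power $(\lambda^{2N}(K))^{1/p'}$ is exactly the step that must be handled in line with the truncation argument of Lemma \ref{app-lem-compact}. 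Once the marginals are placed in the correct Lebesgue spaces, the remainder is the routine algebra of collecting the interpolation exponents $\theta$ and $1-\theta$ into the stated product form.
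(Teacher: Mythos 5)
The paper itself offers no proof of this lemma: it is quoted from Horst--Hunze \cite[Lemma 3.5]{HH84}, and only part (ii) is ever used (in \eqref{eq-L-3}, with $N=3$, $p=3$, $a=2$). So your proposal has to stand on its own. For part (ii) it does: by Tonelli the left-hand side equals $\int\int |x-y|^{-a}\rho_1(x)\rho_2(y)\,dx\,dy$ with $\rho_i(x)=\int_{\R^N}|f_i(x,v)|\,dv$; the hypothesis $\frac{2}{r(p)}+\frac{a}{N}=2$ gives $r(p)=\frac{2N}{2N-a}\in(1,2)$ (and forces $a>0$ automatically, since $a=0$ would give $r(p)=1$, i.e.\ $p=1$, excluded), so the bilinear Hardy--Littlewood--Sobolev inequality applies with $r_1=r_2=r(p)$, and Lemma \ref{app-lem-density} converts each $\|\rho_i\|_{L^{r(p)}}$ into $\|f_i\|_{L^p}^{2p'/(N+2p')}\mathcal K(|f_i|)^{N/(N+2p')}$. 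This is correct, complete, and almost certainly the intended argument behind the citation.

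Part (i), however, has a genuine gap, and it is precisely the step you flagged and deferred: converting the velocity sections of $K$ into the factor $(\lambda^{2N}(K))^{1/p'}$. (Note first that the only sensible reading is $(x,v)\in K$, since for $f_1\in L^p$ with $p>1$ the full marginal $\int_{\R^N}|f_1(x,v)|\,dv$ need not be finite.) H\"older in $v$ gives $g(x):=\int \mathbf{1}_K(x,v)|f_1(x,v)|\,dv\le(\lambda^N(K_x))^{1/p'}\|f_1(x,\cdot)\|_{L^p_v}$, hence $\|g\|_{L^p}\le(\sup_x\lambda^N(K_x))^{1/p'}\|f_1\|_{L^p}$, and $\sup_x\lambda^N(K_x)$ is \emph{not} controlled by $\lambda^{2N}(K)$: for $K=A\times B$ with $\lambda^N(A)$ small, testing $f_1=\mathbf{1}_{A\times B}$ shows the inequality $\|g\|_{L^p}\le(\lambda^{2N}(K))^{1/p'}\|f_1\|_{L^p}$ fails by the factor $\lambda^N(A)^{-1/p'}$. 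Worse, this cannot be repaired, because the statement as transcribed appears to be false: take $N=3$, $p=p'=2$ (so $r(2)=7/5$, $a=33/14$), $f_2(y,w)=|y|^{-1}\mathbf{1}_{\{|y|\le1\}}\mathbf{1}_{\{|w|\le1\}}$ (which is in $L^2$ with finite kinetic energy), $K=\{|x|\le\epsilon\}\times\{|v|\le1\}$ and $f_1=\mathbf{1}_K$. Then $F(x)=\int|x-y|^{-a}\rho_2(y)\,dy\gtrsim |x|^{2-a}=|x|^{-5/14}$ near the origin, so the left-hand side is $\gtrsim\epsilon^{3-5/14}$, while the right-hand side scales like $(\lambda^{6}(K))^{1/2}\|f_1\|_{L^2}\sim\epsilon^{3}$, and the ratio blows up as $\epsilon\to0$. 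The structural reason is that at the critical scaling $\frac1p+\frac1{r(p)}+\frac aN=2$ the potential $F$ lies in $L^{p'}$ but not in $L^\infty$, whereas a bound uniform over all compacts carrying the factor $\lambda^{2N}(K)^{1/p'}$ would require exactly $F\in L^\infty$. What your route does prove --- and what matches the structure of Lemma \ref{app-lem-compact}, where the compact set lives in velocity space only --- is the estimate with $(\lambda^N(\Pi_v K))^{1/p'}$ in place of $(\lambda^{2N}(K))^{1/p'}$, $\Pi_v K$ denoting the projection of $K$ onto the velocity variable; the quoted form of (i) should be read that way or checked against \cite{HH84}. Since the paper only uses part (ii), none of its results are affected.
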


We finish this section by giving an example of noises verifying the properties \eqref{condition-covariance}.

\begin{example}\label{exa-noises}
Let $\Z^3_0:= \Z^3\setminus \{0\}$ be the set of nonzero lattice points, with the partition $\Z^3_0= \Z^3_+ \cup \Z^3_-$ satisfying $\Z^3_+ =- \Z^3_-$. Set $a_k= \frac{k}{|k|}$ for $k\in \Z^3_+$ and $a_k= -\frac{k}{|k|}$ for $k\in \Z^3_-$. Now we define the gradient vector fields on $\T^3$ by $\sigma_k(x)= a_k e^{2\pi ik\cdot x}$, $x\in \T^3,\, k\in \Z^3_0$; the family $\{\sigma_k\}_{k\in \Z^3_0}$ constitutes a complete orthonormal system of the space of square integrable and gradient fields on $\T^3$ with zero mean. Let
  $$W_N(x,t) = \sqrt{3\kappa} \sum_{k} \Gamma^N_k \sigma_{k}(x) W^{k}_t, $$
where the coefficients $\Gamma^N= (\Gamma^N_k)_k \in \ell^2(\Z^3_0)$ are radial symmetric in $k\in \Z^3_0$ and satisfy $\|\Gamma^N \|_{\ell^2}=1$, $\{W^{k} \}_{k\in \Z^3_0}$ is a family of complex Brownian motions with quadratic variation
  $$\big[W^{k}, W^{l}\big]_t = 2 \delta_{k,-l}\, t. $$
Then, for any $x,y\in \T^3$, the covariance function verifies
  $$\aligned
  Q_N(x-y) &= 6\kappa\sum_{k} \big(\Gamma^N_k \big)^2 \sigma_{k}(x)\otimes \sigma_{-k}(y)
  = 6\kappa\sum_{k} \big(\Gamma^N_k \big)^2 \frac{k\otimes k}{|k|^2} e^{2\pi ik\cdot (x-y)}.
  \endaligned $$

As in \cite{FlaLuo21, Gal}, we can take $\{\Gamma^N \}_N \subset \ell^2(\Z^3_0)$ such that
  $$\big\| \Gamma^N \big\|_{\ell^2}=1\quad \mbox{for all } N\ge 1, \mbox{ but} \quad \big\|\Gamma^N \big\|_{\ell^\infty} \to 0 \quad \mbox{as } N\to \infty.$$
Now it is not difficult to show that $Q_N(0)=2\kappa I_3$, cf. \cite[(2.3)]{FlaLuo21}; moreover,
  $$ |Q_N(x)| \le 6 \kappa\sum_{k} \big(\Gamma^N_k \big)^2 = 6\kappa \quad \mbox{for all } x\in \T^3, $$
and
  $$ \|Q_N \|_{L^2} = 6\kappa \bigg[\sum_{k} \big(\Gamma^N_k \big)^4 \Big| \frac{k\otimes k}{|k|^2}\Big|^2 \bigg]^{1/2} \le 6\kappa\, \big\|\Gamma^N \big\|_{\ell^\infty}.$$
Fix any $r\in [1,\infty)$; if $r\in [1,2]$, then
  $$ \|Q_N \|_{L^r} \le \|Q_N \|_{L^2} \le 6\kappa\, \big\|\Gamma^N \big\|_{\ell^\infty}, $$
while if $r\in (2,\infty)$, then by interpolation with parameters $\frac1r = \frac{2/r}2 + \frac{1-2/r}\infty$,
  \[ \|Q_N \|_{L^r} \le \|Q_N \|_{L^2}^{2/r} \|Q_N \|_{L^\infty}^{1-2/r} \le 6 \kappa \, \big\|\Gamma^N \big\|_{\ell^\infty}^{2/r} \]
which vanishes as $N\to \infty$ due to the choice of $\Gamma^N $.
\end{example}

\section{Proof of  Theorem \ref{thm-existence}}\label{sec-proof-existence}

We devote this section to the proof of Theorem \ref{thm-existence}. In the first subsection we introduce smooth approximations of the stochastic Vlasov equation \eqref{stoch-Vlasov-Ito} and prove some basic estimates, then in the second part we show the weak convergence of approximate solutions by using the compactness method.

\subsection{Smooth approximations and estimates}

Note that the Green function satisfies $\nabla G(-x) = -\nabla G(x)$ for all $x\in \T^3= \big[-\frac12,\frac 12 \big]^3$; moreover, $\nabla G(x)\sim C \frac x{|x|^3}$ as $|x|\to 0$. Due to the singularity of $\nabla G$ near the origin, the existence of weak solutions to \eqref{stoch-Vlasov-Ito} is not straightforward, thus we introduce a regularization of $G$. For $\delta\in (0,1/2)$, let $G^\delta\in C^\infty(\T^3)$ be a symmetric function such that $G^\delta(x)= G(x)$ for $|x|\ge \delta$, and $|G^\delta(x)| \lesssim \delta^{-1}$. Now for fixed $\delta>0$, we consider the approximating stochastic Vlasov equations
  \begin{equation}\label{stoch-Vlasov-approx}
  d f^\delta + v\cdot\nabla_x f^\delta\, dt + (E_{\rho^\delta} + B v\times \bm{e}_3) \cdot\nabla_v f^\delta \, dt + \sum_k \sigma_k \cdot\nabla_v f^\delta \circ d W^k_t =0,
  \end{equation}
where
  \begin{equation}\label{smooth-electric-field}
  E_{\rho^\delta_t}(x)= (\nabla G^\delta\ast \rho^\delta_t)(x)= \int_{\T^3} \nabla G^\delta(x-y) \rho^\delta_t(y)\, dy.
  \end{equation}
We supplement \eqref{stoch-Vlasov-approx} with an initial condition $f^\delta_0\in C_c^\infty (\T^3\times \R^3, \R_+)$.

\begin{proposition}\label{prop-well-posedness-approx}
Assume that $f^\delta_0\in C_c^\infty (\T^3\times \R^3, \R_+)$. Then for any $\delta>0$, the approximate equation \eqref{stoch-Vlasov-approx} admits a unique probabilistically strong solution $f^\delta$, represented as
  $$f^\delta_t(x,v)= f^\delta_0 \big(\Phi^{\delta,-1}_t(x,v) \big), $$
where $\big\{\Phi^{\delta,-1}_t \big\}_{t\ge 0}$ is the inverse flow associated to the stochastic equations
  \begin{equation}\label{stoch-characteristics}
  \left\{ \aligned
  d X^\delta_t &= V^\delta_t\, dt, \\
  d V^\delta_t &= \big[E_{\rho^\delta_t}\big(X^\delta_t \big) + B V^\delta_t\times \bm{e}_3 \big]\, dt + \sum_k \sigma_k\big(X^\delta_t \big)\circ dW^k_t
  \endaligned \right.
  \end{equation}
with initial condition $\big(X^\delta_0, V^\delta_0 \big)= (x,v)\in \T^3\times \R^3$. Moreover, $\|f^\delta_t \|_{L^p_{x,v}} = \|f^\delta_0 \|_{L^p_{x,v}}$ for any $p\ge 1$.
\end{proposition}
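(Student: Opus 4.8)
The plan is to solve \eqref{stoch-Vlasov-approx} by the stochastic method of characteristics, treating the nonlinear coupling through the density by a fixed-point argument. The starting observation is that, once the self-consistent field is \emph{frozen}, the characteristic system \eqref{stoch-characteristics} is a well-behaved SDE. Indeed, fix an adapted, time-continuous field $E\colon[0,T]\times\T^3\to\R^3$ that is smooth in $x$ with space derivatives bounded uniformly in $t$; since $G^\delta\in C^\infty(\T^3)$, any field $E_t=\nabla G^\delta\ast\rho_t$ with $\sup_t\|\rho_t\|_{L^1}<\infty$ has this property, with $\|\nabla^j E_t\|_\infty\le\|\nabla^{j+1}G^\delta\|_\infty\sup_t\|\rho_t\|_{L^1}$. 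The drift $(v,\,E_t(x)+Bv\times\bm{e}_3)$ then has globally Lipschitz, linearly growing coefficients, and the noise coefficients $\sigma_k$ are smooth, bounded and summable in the sense guaranteed by $Q\in C_b^\infty$. By Kunita's theory of stochastic flows, \eqref{stoch-characteristics} admits a unique strong solution generating a stochastic flow of $C^\infty$ diffeomorphisms $\Phi^\delta_t$ of $\T^3\times\R^3$; the linear growth moreover yields moment bounds $\E\sup_{s\le t}|V^\delta_s|^p<\infty$, so that the compact support of $f^\delta_0$ is carried to a $\P$-a.s.\ compact (random) set and $\rho^\delta_t$ is well defined.

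The \emph{clean} part of the argument is the representation formula together with conservation of the $L^p$ norms. Writing the phase-space drift as $b(x,v)=(v,\,E_t(x)+Bv\times\bm{e}_3)$ and the noise fields as $\Sigma_k(x,v)=(0,\sigma_k(x))$, one checks that the phase-space divergence vanishes: $\div_x v=0$, $\div_v(E_t(x)+Bv\times\bm{e}_3)=\div_v(Bv\times\bm{e}_3)=0$ because $v\times\bm{e}_3=(v_2,-v_1,0)$, and $\div_v\Sigma_k=0$ since $\sigma_k$ depends on $x$ only. Hence the flow preserves Lebesgue measure $dx\,dv$, i.e.\ $\det D\Phi^\delta_t\equiv1$. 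The scalar transport structure of \eqref{stoch-Vlasov-approx} gives the representation $f^\delta_t:=f^\delta_0\circ\Phi^{\delta,-1}_t$ via the It\^o--Wentzell/Kunita formula, and the change of variables with unit Jacobian yields $\|f^\delta_t\|_{L^p_{x,v}}=\|f^\delta_0\|_{L^p_{x,v}}$ for every $p\ge1$. In particular $\|f^\delta_t\|_{L^\infty}=\|f^\delta_0\|_{L^\infty}$, so that $\rho^\delta_t\in L^\infty(\T^3)$ with mass $\int\rho^\delta_t\,dx=\|f^\delta_0\|_{L^1}$ conserved, which feeds back the uniform bounds required to freeze the field.

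The main work, and the expected obstacle, is to close the nonlinear loop, i.e.\ to find $E$ coinciding with the field $\nabla G^\delta\ast\rho^\delta$ generated by its own pushforward density. I would set up a contraction on a short interval $[0,T_0]$ for the map $\Psi\colon E\mapsto\nabla G^\delta\ast\big(\int f^\delta_0\circ\Phi^{\delta,-1}\,dv\big)$. For two fields $E^1,E^2$ a Gronwall estimate on \eqref{stoch-characteristics} controls the flows by $\E\sup_{s\le t}|\Phi^{1}_s-\Phi^{2}_s|^2\le C_\delta\int_0^t\E\|E^1_s-E^2_s\|_\infty^2\,ds$, the constant $C_\delta$ coming from the Lipschitz norms of the frozen drift and hence from the bounded derivatives of $G^\delta$ (this is exactly where the regularization is essential). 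Testing $\rho^{\delta,i}_t$ against $\phi\in C_c^\infty$ and using $\int\rho^{\delta,i}_t\phi=\int f^\delta_0\,(\phi\circ\Phi^i_t)$ turns the flow difference into a Wasserstein-type control of $\rho^{\delta,1}_t-\rho^{\delta,2}_t$; composed with the Lipschitz smoothing $\nabla^2 G^\delta\in L^\infty$ this gives $\sup_{t\le T_0}\E\|\Psi(E^1)_t-\Psi(E^2)_t\|_\infty^2\le C_\delta T_0\sup_{t\le T_0}\E\|E^1_t-E^2_t\|_\infty^2$.

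Choosing $T_0$ small makes $\Psi$ a contraction, hence it has a unique fixed point; since the a priori bounds (conserved mass, $L^p$ norms, velocity moments) are uniform in time, the construction is iterated to cover $[0,T]$. The fixed point is adapted by construction, giving a probabilistically \emph{strong} solution, and uniqueness at the SPDE level follows from uniqueness of the fixed point together with the fact that any solution must admit the representation $f^\delta_0\circ\Phi^{\delta,-1}_t$, obtained by applying the It\^o--Wentzell formula to its own (necessarily smooth) self-consistent field.
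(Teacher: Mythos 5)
Your proposal is correct and takes essentially the same route as the paper: the paper's proof simply invokes the method of Coghi--Flandoli \cite{CogFla} (freeze the self-consistent field, solve the characteristic SDE via Kunita's theory of stochastic flows, close the nonlinearity by a fixed point) together with the stochastic Liouville theorem for the measure preservation and hence the $L^p$ conservation, which is exactly the structure you spell out. The only cosmetic difference is that you run the contraction in the norm $\sup_{t}\E\|\cdot\|_{\infty}^{2}$ on the electric field rather than in a Wasserstein-type metric on measure trajectories, which changes nothing essential.
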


\begin{proof}
This result can be proved by following the method in \cite[Theorem 13]{CogFla}. Note that, since $\nabla G^\delta\in C_b^\infty(\T^3,\R^3)$, the electric field in \eqref{smooth-electric-field} is smooth on $\T^3$ for any $\rho^\delta_t \in L^1(\T^3)$. Combined with our assumptions on the noise, we see that the stochastic equations \eqref{stoch-characteristics} generate a stochastic flow of diffeomorphisms, cf. \cite{Kunita}. The preservation of $L^p_{x,v}$-norm is due to the measure-preserving property of the stochastic flow $\big\{\Phi^{\delta}_t \big\}_{t\ge 0}$, a fact following from the stochastic version of Liouville theorem.
\end{proof}

\begin{remark}\label{rem-velocity-eq}
Since $d\big(\sigma_k\big(X^\delta_t \big) \big)= V^\delta_t\cdot\nabla \sigma_k\big(X^\delta_t \big)\, dt$, one has $d\big[\sigma_k\big(X^\delta \big), W^k \big]_t=0$, thus the second equation in \eqref{stoch-characteristics} has the equivalent It\^o form
  $$d V^\delta_t = \big[E_{\rho^\delta_t}\big(X^\delta_t \big) + B V^\delta_t\times \bm{e}_3 \big]\, dt + \sum_k \sigma_k\big(X^\delta_t \big)\, dW^k_t . $$
As a result, the quadratic variation is
  $$\frac{d}{dt}\big[V^\delta, V^\delta \big]_t = \sum_k \big|\sigma_k\big(X^\delta_t \big) \big|^2 = {\rm Tr}(Q(0)) = 6\kappa, $$
where the last step is due to \eqref{covariance}.
\end{remark}

Introduce the potential energy
  $$\aligned
  \mathcal V_\delta(f_0) &= -\int_{\T^3}\! \int_{\T^3} G^\delta(x-y) \rho_0(x) \rho_0(y)\,dx dy\\
  &= -\int_{\T^3\times \R^3}\! \int_{\T^3\times \R^3} G^\delta(x-y) f_0(x,v) f_0(y,w)\,dx dv dy dw\\
  &= -\int_{\T^6}\! \int_{\R^6} G^\delta(x-y) f_0(x,v) f_0(y,w)\,dx dv dy dw.
  \endaligned $$
Note that $-G^\delta$ is uniformly bounded from below; thus for any probability density $f_0: \T^3\times \R^3 \to \R_+$, $\mathcal V_\delta(f_0)\ge C$ for some constant independent of $\delta>0$.

In view of the assumptions on $f_0$ in Theorem \ref{thm-existence}, we make the following assumptions on the initial data $\{f^\delta_0 \}_{\delta>0}$ of equation \eqref{stoch-Vlasov-approx}:
  \begin{equation}\label{initial-condition}
  \sup_{\delta>0} \big[\|f^\delta_0 \|_{L^1_{x,v}\cap L^3_{x,v}} \vee \mathcal K(f^\delta_0) \vee \mathcal V_\delta(f^\delta_0)\big] \le C_0 <\infty.
  \end{equation}

In order to derive a relation between $\mathcal K(f^\delta_t)$ and $\mathcal V_\delta(f^\delta_t)$, we first prove the following estimate which is not uniform in $\delta>0$.

\begin{lemma}\label{lem-kinetic}
For any $\delta>0$, there are constants $C_\delta, C_{\kappa,\delta}>0$ such that for any $t\ge 0$, it holds
  $$\E \mathcal K(f^\delta_t) \le  e^{ C_{\delta,\kappa} t} \big[\mathcal K(f^\delta_0) + \|f^\delta_0\|_{L^1_{x,v}}\big]. $$
\end{lemma}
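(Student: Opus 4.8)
The plan is to reduce the kinetic-energy estimate to a second-moment bound on the velocity characteristic and then close a Gronwall inequality. First I would exploit the explicit flow representation $f^\delta_t = f^\delta_0\circ\Phi^{\delta,-1}_t$ from Proposition \ref{prop-well-posedness-approx} together with the measure-preserving property of the flow $\Phi^\delta_t$. Changing variables $(x,v)=\Phi^\delta_t(x_0,v_0)$ in the definition of $\mathcal K$ converts the energy of $f^\delta_t$ into an average of the squared velocity along characteristics:
$$\E\,\mathcal K(f^\delta_t) = \int_{\T^3\times\R^3} \E\big|V^\delta_t(x,v)\big|^2\, f^\delta_0(x,v)\,dx\,dv,$$
where $(X^\delta_t,V^\delta_t)$ solves \eqref{stoch-characteristics} with initial datum $(x,v)$. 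It therefore suffices to control $\E|V^\delta_t|^2$ pointwise in the starting point and integrate against $f^\delta_0$.

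Next I would apply It\^o's formula to $|V^\delta_t|^2$, using the It\^o form of the velocity equation recorded in Remark \ref{rem-velocity-eq}. The magnetic drift contributes nothing, since $V^\delta_t\cdot(V^\delta_t\times\bm{e}_3)=0$; the quadratic-variation term contributes the constant $6\kappa$ computed in Remark \ref{rem-velocity-eq}; and the stochastic integral $\int 2\,V^\delta_s\cdot\sum_k\sigma_k(X^\delta_s)\,dW^k_s$ is a martingale, so it drops out after taking expectations. This yields
$$\frac{d}{dt}\,\E\big|V^\delta_t\big|^2 = 2\,\E\big[V^\delta_t\cdot E_{\rho^\delta_t}(X^\delta_t)\big] + 6\kappa.$$
For the electric-field term I would use that $\nabla G^\delta\in C_b^\infty(\T^3)$, so $\|\nabla G^\delta\|_\infty\le C_\delta$, together with the conservation of the $L^1_{x,v}$-norm from Proposition \ref{prop-well-posedness-approx}, which gives $\|\rho^\delta_t\|_{L^1}=\|f^\delta_0\|_{L^1_{x,v}}$ and hence $|E_{\rho^\delta_t}(x)|\le C_\delta\|f^\delta_0\|_{L^1_{x,v}}=:M_\delta$, uniformly in $x$ and $t$. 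Young's inequality then bounds $2\,V^\delta_t\cdot E_{\rho^\delta_t}(X^\delta_t)\le |V^\delta_t|^2 + M_\delta^2$.

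Integrating the resulting differential inequality against $f^\delta_0$ and writing $K(t):=\E\,\mathcal K(f^\delta_t)$, I obtain $K'(t)\le K(t) + (M_\delta^2+6\kappa)\|f^\delta_0\|_{L^1_{x,v}}$. To turn this into the stated pure-exponential bound rather than an exponential with an additive remainder, I would absorb the constant using the conserved mass: setting $H(t):=K(t)+\|f^\delta_0\|_{L^1_{x,v}}$ and noting $\|f^\delta_0\|_{L^1_{x,v}}\le H(t)$, the inequality becomes $H'(t)\le (1+M_\delta^2+6\kappa)\,H(t)$, so Gronwall's lemma gives $H(t)\le e^{C_{\delta,\kappa}t}H(0)$ with $C_{\delta,\kappa}=1+M_\delta^2+6\kappa$, which is exactly the claim since $K(t)\le H(t)$.

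The one genuinely delicate point is the a priori justification that the stochastic integral above is a true martingale, i.e. that $\E|V^\delta_t|^2<\infty$ before we have derived any bound. This I would handle by a standard localization argument along stopping times $\tau_n=\inf\{t:|V^\delta_t|\ge n\}$, deriving the inequality for the stopped process (where all terms are integrable because $\sum_k|\sigma_k|^2=6\kappa$ is bounded and the drift has linear growth in $V^\delta$), and then passing to the limit by Fatou and monotone convergence. It is precisely here, through $M_\delta\sim C_\delta=\|\nabla G^\delta\|_\infty$, that the estimate is forced to depend on $\delta$, consistent with the lemma being stated as non-uniform in $\delta$.
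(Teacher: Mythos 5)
Your proposal is correct and follows essentially the same route as the paper: the flow representation $\mathcal K(f^\delta_t)=\int |V^\delta_t|^2 f^\delta_0\,dx\,dv$ from measure preservation, It\^o's formula for $|V^\delta_t|^2$ with the magnetic term vanishing and the $6\kappa$ quadratic-variation contribution, the $\delta$-dependent uniform bound $|E_{\rho^\delta_t}|\le C_\delta\|f^\delta_0\|_{L^1_{x,v}}$, localization by the stopping times $\tau_n$, Gronwall, and Fatou. The only (cosmetic) differences are that the paper runs Gronwall pointwise in $(x,v)$ on $\E|V^\delta_{t\wedge\tau_n}(x,v)|^2\le(|v|^2+1)e^{C_{\delta,\kappa}t}$ and integrates afterwards, whereas you integrate against $f^\delta_0$ first and absorb the additive constant via $H(t)=K(t)+\|f^\delta_0\|_{L^1_{x,v}}$ --- the same device as the paper's $(|v|^2+1)$ factor.
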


\begin{proof}
We have, by Proposition \ref{prop-well-posedness-approx},
  \begin{equation}\label{lem-kinetic.0}
  \mathcal K(f^\delta_t) =\int_{\T^3\times \R^3} |v|^2 f^\delta_0 \big(\Phi^{\delta,-1}_t(x,v) \big)\, dx dv = \int_{\T^3\times \R^3} \big|V^\delta_t(x,v) \big|^2 f^\delta_0 (x,v)\, dx dv,
  \end{equation}
where in the last step we have used the measure-preserving property of the stochastic flow $\Phi^{\delta}_t= \big(X^\delta_t, V^\delta_t \big)$, $t\ge 0$. In the sequel, we omit sometimes the initial condition $(x,v)$ in $\Phi^{\delta}_t(x,v)= \big(X^\delta_t(x,v), V^\delta_t(x,v) \big)$.

By It\^o's formula and Remark \ref{rem-velocity-eq},
  \begin{equation}\label{lem-kinetic.1}
  \aligned
  d \big|V^\delta_t \big|^2 &= 2V^\delta_t \cdot d V^\delta_t + d\big[V^\delta, V^\delta \big]_t \\
  &= 2V^\delta_t \cdot E_{\rho^\delta_t} \big(X^\delta_t \big) \, dt + 2 \sum_k V^\delta_t \cdot \sigma_k\big(X^\delta_t \big)\, dW^k_t + 6\kappa\, dt,
  \endaligned
  \end{equation}
where we have used $V^\delta_t \cdot \big(V^\delta_t\times \bm{e}_3\big)=0$. Note that $|\nabla G^\delta(x)| \le C_\delta<+\infty$, thus
  $$\big|E_{\rho^\delta_t}(x) \big| \le \int |\nabla G^\delta(x-y)| \rho^\delta_t(y)\, dy \le C_\delta \|\rho^\delta_t \|_{L^1_x} = C_\delta \|f^\delta_t \|_{L^1_{x,v}}  = C_\delta \|f^\delta_0 \|_{L^1_{x,v}} \le C'_\delta . $$
Let $N_t= N_t(x,v)= 2 \sum_k \int_0^t V^\delta_s \cdot \sigma_k\big(X^\delta_s \big)\, dW^k_s$ be the martingale part, with quadratic variation
  $$[N]_t = 4\sum_k \int_0^t \big(V^\delta_s \cdot \sigma_k\big(X^\delta_s \big)\big)^2\, ds \le 24 \kappa \int_0^t \big|V^\delta_s \big|^2\, ds, $$
where the last step follows from \eqref{covariance}. Introduce a sequence of stopping times $\tau_n =\tau_n(x,v) =\inf \big\{t\ge 0: \big|V^\delta_t(x,v) \big|\ge n \big\}$ which are increasing in $n\ge 1$; then $N_{t\wedge \tau_n}$ are square integrable martingales. Thus, by \eqref{lem-kinetic.1},
  $$\aligned
  \E \big|V^\delta_{t\wedge \tau_n} \big|^2 &\le |v|^2 + 2\, \E \int_0^{t\wedge \tau_n}  \big(C'_{\delta} \big|V^\delta_s \big| + 3\kappa \big)\, ds \\
  &\le |v|^2 + C_{\delta,\kappa} \int_0^t \E \big(\big|V^\delta_{s\wedge \tau_n} \big|^2 + 1\big)\, ds.
  \endaligned $$
Gronwall's inequality implies
  $$\E \big|V^\delta_{t\wedge \tau_n} \big|^2 \le (|v|^2 + 1) e^{ C_{\delta,\kappa} t}.$$
Letting $n\to \infty$ and by Fatou's lemma, we obtain
  $$\E \big|V^\delta_t(x,v) \big|^2 \le (|v|^2 + 1) e^{ C_{\delta,\kappa} t}.$$
The desired result follows by combining it with \eqref{lem-kinetic.0}.
\end{proof}

Now we can derive the relation between $\mathcal K(f^\delta_t)$ and $\mathcal V_\delta(f^\delta_t)$.

\begin{proposition}[Energy evolution]\label{prop-kinetic}
Fix $\delta>0$. For any $t>0$, it holds
  \begin{equation}\label{prop-kinetic.0}
  \mathcal K(f^\delta_t) + \mathcal V_\delta (f^\delta_t)= \mathcal K(f^\delta_0) + \mathcal V_\delta(f^\delta_0) + 6\kappa t \|f^\delta_0\|_{L^1_{x,v}} +M_t,
  \end{equation}
where $M_t$ is a square integrable martingale with quadratic variation
  $$[M]_t \le 24\kappa \|f^\delta_0 \|_{L^1_{x,v}} \int_0^t\mathcal K(f^\delta_s)\, ds. $$
\end{proposition}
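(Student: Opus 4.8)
The plan is to differentiate the kinetic and potential energies separately along the stochastic characteristics, using the flow representation $f^\delta_t(x,v)=f^\delta_0(\Phi^{\delta,-1}_t(x,v))$ and the measure-preserving property from Proposition \ref{prop-well-posedness-approx} to freeze the integration measure to $f^\delta_0\,dx\,dv$. As in \eqref{lem-kinetic.0} this already gives $\mathcal K(f^\delta_t)=\int\big|V^\delta_t(x,v)\big|^2 f^\delta_0(x,v)\,dx\,dv$; applying the same change of variables to each factor in the double integral defining $\mathcal V_\delta$ yields
  $$\mathcal V_\delta(f^\delta_t)=-\int\!\int G^\delta\big(X^\delta_t(x,v)-X^\delta_t(y,w)\big)\, f^\delta_0(x,v)\, f^\delta_0(y,w)\,dx\,dv\,dy\,dw.$$

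Next I would apply the differential calculus to the two integrands and then integrate (stochastic Fubini). For the kinetic part, \eqref{lem-kinetic.1} gives $d\big|V^\delta_t\big|^2=2V^\delta_t\cdot E_{\rho^\delta_t}(X^\delta_t)\,dt+6\kappa\,dt+2\sum_k V^\delta_t\cdot\sigma_k(X^\delta_t)\,dW^k_t$. For the potential part, since $dX^\delta_t=V^\delta_t\,dt$ has no martingale component, the map $t\mapsto X^\delta_t(x,v)-X^\delta_t(y,w)$ is of bounded variation and the ordinary chain rule applies, producing only a drift $\nabla G^\delta\big(X^\delta_t(x,v)-X^\delta_t(y,w)\big)\cdot\big(V^\delta_t(x,v)-V^\delta_t(y,w)\big)$; in particular $\mathcal V_\delta(f^\delta_t)$ carries no martingale part.

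The heart of the matter is a cancellation between these two drifts. Writing out $E_{\rho^\delta_t}(X^\delta_t(x,v))$ and changing variables $z=X^\delta_t(y,w)$ in its defining integral gives the self-consistency identity $E_{\rho^\delta_t}(X^\delta_t(x,v))=\int\!\int\nabla G^\delta\big(X^\delta_t(x,v)-X^\delta_t(y,w)\big) f^\delta_0(y,w)\,dy\,dw$, so the $(x,v)$-integral of the kinetic drift equals $2I+6\kappa\|f^\delta_0\|_{L^1_{x,v}}$, where $I=\int\!\int\nabla G^\delta\big(X^\delta_t(x,v)-X^\delta_t(y,w)\big)\cdot V^\delta_t(x,v)\, f^\delta_0(x,v) f^\delta_0(y,w)$. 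Splitting the potential drift into its $V^\delta_t(x,v)$ and $V^\delta_t(y,w)$ parts, relabelling $(x,v)\leftrightarrow(y,w)$ in the latter, and using that $\nabla G^\delta$ is odd (because $G^\delta$ is symmetric), one finds that each part equals $-I$, so the $(x,v)$-integral of the potential drift equals $-2I$. After integrating in time the two $\pm2I$ terms cancel, leaving exactly the deterministic contribution $6\kappa t\,\|f^\delta_0\|_{L^1_{x,v}}$, which yields \eqref{prop-kinetic.0} with
  $$M_t=2\sum_k\int_0^t\bigg(\int V^\delta_s(x,v)\cdot\sigma_k\big(X^\delta_s(x,v)\big) f^\delta_0(x,v)\,dx\,dv\bigg)\,dW^k_s.$$

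Finally, for the quadratic variation I would bound each coefficient by Cauchy--Schwarz in $(x,v)$ with weight $f^\delta_0$, obtaining $\big(\int V^\delta_s\cdot\sigma_k(X^\delta_s) f^\delta_0\big)^2\le\|f^\delta_0\|_{L^1_{x,v}}\int(V^\delta_s\cdot\sigma_k(X^\delta_s))^2 f^\delta_0$, and then sum over $k$ using $\sum_k|\sigma_k(X^\delta_s)|^2={\rm Tr}\,Q(0)=6\kappa$ from \eqref{covariance}; this produces $[M]_t\le24\kappa\|f^\delta_0\|_{L^1_{x,v}}\int_0^t\mathcal K(f^\delta_s)\,ds$. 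Taking expectations and invoking Lemma \ref{lem-kinetic} shows $\E[M]_t<\infty$, so $M$ is a genuine square integrable martingale. The main obstacle is not any single estimate but getting the cancellation right --- in particular the oddness of $\nabla G^\delta$ and the self-consistency identity for $E_{\rho^\delta_t}$ --- while the only technical care needed is the stochastic Fubini exchange between the $(x,v)$ integral and the stochastic integral, which is harmless here thanks to the smoothness and compact support of $f^\delta_0$ together with the moment bounds of Lemma \ref{lem-kinetic}.
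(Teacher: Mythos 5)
Your proposal is correct and follows essentially the same route as the paper's proof: the flow representation with the measure-preserving property to freeze the weight at $f^\delta_0$, It\^o's formula \eqref{lem-kinetic.1} for the kinetic part, the ordinary chain rule for $G^\delta$ along the bounded-variation characteristics $X^\delta_t$, the self-consistency identity for $E_{\rho^\delta_t}$ combined with the symmetrization $(x,v)\leftrightarrow(y,w)$ and oddness of $\nabla G^\delta$ to cancel the two drifts, and the same Cauchy--Schwarz plus ${\rm Tr}\,Q(0)=6\kappa$ bound giving the constant $24\kappa$, with Lemma \ref{lem-kinetic} ensuring square integrability of $M$. No gaps.
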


\begin{proof}
By \eqref{lem-kinetic.0} and \eqref{lem-kinetic.1},
  \begin{equation}\label{prop-kinetic.1}
  \aligned
  \mathcal K(f^\delta_t) &= \mathcal K(f^\delta_0) + 6\kappa t \|f^\delta_0\|_{L^1_{x,v}} + 2\int_0^t\int_{\T^3\times \R^3} V^\delta_s \cdot E_{\rho^\delta_s} \big(X^\delta_s \big) f^\delta_0 (x,v) \,dx dv ds \\
  &\quad + 2 \sum_k \int_0^t\int_{\T^3\times \R^3} V^\delta_s \cdot \sigma_k\big(X^\delta_s \big) f^\delta_0 (x,v) \, dx dv dW^k_s.
  \endaligned
  \end{equation}
It remains to analyze the last two terms in \eqref{prop-kinetic.1}. Recalling \eqref{smooth-electric-field},
  $$\aligned
  & \int_{\T^3\times \R^3} V^\delta_s \cdot E_{\rho^\delta_s} \big( X^\delta_s \big) f^\delta_0 (x,v) \,dx dv \\
  &= \int_{\T^3\times \R^3} V^\delta_s(x,v) \cdot \bigg[\int_{\T^3\times \R^3} \nabla G^\delta\big(X^\delta_s(x,v) -y \big) f^\delta_s(y,w)\, dydw \bigg] f^\delta_0 (x,v) \,dx dv \\
  &= \int_{\T^3\times \R^3} V^\delta_s(x,v) \cdot \bigg[\int_{\T^3\times \R^3} \nabla G^\delta\big(X^\delta_s(x,v) - X^\delta_s(y,w) \big) f^\delta_0(y,w)\, dydw \bigg] f^\delta_0 (x,v) \,dx dv ,
  \endaligned $$
where the last step is due to Proposition \ref{prop-well-posedness-approx}. Exchanging variables $(x,v) \leftrightarrow (y,w)$, we have
  $$\aligned
  & 2 \int_{\T^3\times \R^3} V^\delta_s \cdot E_{\rho^\delta_s} \big( X^\delta_s \big) f^\delta_0 (x,v) \,dx dv \\
  &= \int_{\T^3\times \R^3} V^\delta_s(x,v) \cdot \bigg[\int_{\T^3\times \R^3} \nabla G^\delta\big(X^\delta_s(x,v) - X^\delta_s(y,w) \big) f^\delta_0(y,w)\, dydw \bigg] f^\delta_0 (x,v) \,dx dv \\
  &\quad + \int_{\T^3\times \R^3} V^\delta_s(y,w) \cdot \bigg[\int_{\T^3\times \R^3} \nabla G^\delta\big(X^\delta_s(y,w) - X^\delta_s(x,v) \big) f^\delta_0(x,y)\, dx dv \bigg] f^\delta_0 (y,w) \, dydw \\
  &= \int_{\T^6}\! \int_{\R^6} \big(V^\delta_s(x,v)- V^\delta_s(y,w)\big)\cdot  \nabla G^\delta\big(X^\delta_s(x,v) - X^\delta_s(y,w) \big) f^\delta_0(y,w) f^\delta_0 (x,v) \,dx dv dydw,
  \endaligned $$
where we have used the property $\nabla G^\delta(-z)= -\nabla G^\delta(z)$.

Next, letting $M_t$ be the martingale part in \eqref{prop-kinetic.1}, then
  $$\aligned
  d[M]_t &=4 \sum_k \bigg(\int_{\T^3\times \R^3} V^\delta_s \cdot \sigma_k\big(X^\delta_s \big) f^\delta_0 (x,v) \, dx dv \bigg)^2 d t\\
  &\le 4\|f^\delta_0 \|_{L^1_{x,v}} \sum_k \int_{\T^3\times \R^3} \big| V^\delta_s \cdot \sigma_k\big(X^\delta_s \big) \big|^2 f^\delta_0 (x,v) \, dx dv d t\\
  &\le 24\kappa \|f^\delta_0 \|_{L^1_{x,v}} \int_{\T^3\times \R^3} \big| V^\delta_s \big|^2 f^\delta_0 (x,v) \, dx dv dt,
  \endaligned $$
where the last step is due to \eqref{covariance}. By \eqref{lem-kinetic.0} and the assertion of Lemma \ref{lem-kinetic}, we see that $\{M_t \}_{t\ge 0}$ is a square integrable martingale whose quadratic variation has the desired estimate.

Finally, we consider the potential energy:
  $$\aligned
  \mathcal V_\delta (f^\delta_t) &= -\int_{\T^6}\! \int_{\R^6} G^\delta(x-y) f^\delta_t(x,v) f^\delta_t(y,w)\,dx dv dy dw \\
  &= -\int_{\T^6}\! \int_{\R^6} G^\delta\big(X^\delta_t(x,v) - X^\delta_t(y,w)\big)f^\delta_0(y,w) f^\delta_0 (x,v) \,dx dv dydw.
  \endaligned $$
By the first equation in \eqref{stoch-characteristics}, we have
  $$\aligned
  & G^\delta\big(X^\delta_t(x,v) - X^\delta_t(y,w)\big) \\
  &= G^\delta(x-y) + \int_0^t \nabla G^\delta \big(X^\delta_s(x,v) - X^\delta_s(y,w)\big)\cdot \big(V^\delta_s(x,v) - V^\delta_s(y,w)\big)\, ds,
  \endaligned $$
as a result,
  $$\aligned
  \mathcal V_\delta (f^\delta_t) &= \mathcal V_\delta(f^\delta_0) - \int_0^t\!\! \int_{\T^6}\! \int_{\R^6}\! \nabla G^\delta \big(X^\delta_s(x,v) - X^\delta_s(y,w)\big)\cdot \big(V^\delta_s(x,v) - V^\delta_s(y,w)\big)\, dx dv dydw ds.
  \endaligned $$
Combining this identity with the computations below \eqref{prop-kinetic.1}, we arrive at the desired result.
\end{proof}

The next estimate will play an important role in the proof of tightness of laws of $\{f^\delta\}_{\delta>0}$.

\begin{corollary}\label{cor-kinetic-energy}
Under \eqref{initial-condition}, for any $q\ge 1$, we have
  $$\sup_{\delta>0} \sup_{t\in [0,T]} \E\, \mathcal K(f^\delta_t)^{2q} \le C_{q,T} <+\infty. $$
\end{corollary}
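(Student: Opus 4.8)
The plan is to convert the energy identity of Proposition \ref{prop-kinetic} into a one-sided upper bound for $\mathcal K(f^\delta_t)$ and then estimate the resulting martingale by a Burkholder--Davis--Gundy (BDG) argument, being careful that every constant produced is uniform in $\delta$. First I would rewrite \eqref{prop-kinetic.0} as
$$ \mathcal K(f^\delta_t) = \mathcal K(f^\delta_0) + \mathcal V_\delta(f^\delta_0) - \mathcal V_\delta(f^\delta_t) + 6\kappa t\|f^\delta_0\|_{L^1_{x,v}} + M_t. $$
The crucial structural input is the \emph{uniform-in-$\delta$} lower bound on the potential energy: since $-G^\delta$ is bounded below by a constant independent of $\delta$ and the $L^1_{x,v}$-norm is preserved (Proposition \ref{prop-well-posedness-approx}), one has $\mathcal V_\delta(f^\delta_t)\ge -C$ with $C$ independent of $\delta$. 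Combined with the initial bounds \eqref{initial-condition}, this gives
$$ Y_t := \mathcal K(f^\delta_t) \le A_T + M_t \qquad \text{for all } t\in[0,T], $$
with a deterministic constant $A_T$ depending only on $C_0,\kappa,T$, while the quadratic variation obeys $[M]_t\le c_1\int_0^t Y_s\,ds$ with $c_1:=24\kappa\|f^\delta_0\|_{L^1_{x,v}}\le 24\kappa C_0$, again uniform in $\delta$.

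Next I would estimate the $2q$-th moment through a localization. Since $t\mapsto Y_t$ is continuous (the flow $V^\delta$ is continuous and $f^\delta_0$ has compact support), the stopping times $\tau_R:=\inf\{t\ge0:Y_t\ge R\}$ increase to $\infty$ almost surely, and on $[0,\tau_R]$ the stopped quadratic variation is bounded by $c_1 R T$, so $M_{\cdot\wedge\tau_R}$ is a genuine martingale. Writing $m_R(t):=\E\,Y_{t\wedge\tau_R}^{2q}\le R^{2q}<\infty$, I would combine $Y_{t\wedge\tau_R}\le A_T+M_{t\wedge\tau_R}$ with BDG, Jensen in time, and then the concavity bound $\E\,Y^{q}\le(\E\,Y^{2q})^{1/2}$ to obtain
$$ m_R(t) \le 2^{2q-1}A_T^{2q} + 2^{2q-1}C_q\,c_1^{q}\,T^{q-1}\int_0^t m_R(s)^{1/2}\,ds =: a + b\int_0^t m_R(s)^{1/2}\,ds, $$
with $a,b$ uniform in $\delta$ and $R$. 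As $m_R$ is bounded, the comparison $\phi'\le b\,\phi^{1/2}$ for $\phi(t):=a+b\int_0^t m_R(s)^{1/2}\,ds$ yields $m_R(t)\le\phi(t)\le\big(\sqrt a + bT/2\big)^2=:C_{q,T}$, uniformly in $\delta$ and $R$. Letting $R\to\infty$, so that $t\wedge\tau_R=t$ eventually, and applying Fatou's lemma gives $\E\,\mathcal K(f^\delta_t)^{2q}\le C_{q,T}$; taking the supremum over $t\in[0,T]$ and over $\delta>0$ concludes.

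The main obstacle is the mismatch of exponents when closing the estimate: BDG controls $\E\,|M_t|^{2q}$ by $\E\,[M]_t^{q}$, and the quadratic variation only sees the \emph{first} power of $\mathcal K$, so the natural closing relation is the nonlinear integral inequality $m_R\le a+b\int m_R^{1/2}$ rather than a standard linear Gronwall; the quadratic (Osgood-type) comparison above is exactly what makes it close. A secondary point is the need for a priori finiteness of the moments before bootstrapping, which is handled by the localization $\tau_R$, and — most importantly — one must keep the lower bound on $\mathcal V_\delta$, the initial constant $C_0$, and the BDG constant all independent of $\delta$, since any $\delta$-dependence (as already appears in the non-uniform estimate of Lemma \ref{lem-kinetic}) would destroy the uniform conclusion.
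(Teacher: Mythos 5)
Your proof is correct and structurally parallel to the paper's: both start from the energy identity \eqref{prop-kinetic.0} of Proposition \ref{prop-kinetic}, use the uniform-in-$\delta$ lower bound on the potential energy to obtain the one-sided bound $\mathcal K(f^\delta_t)\le A_T+M_t$, localize with stopping times so that all moments are a priori finite, estimate the martingale by Burkholder--Davis--Gundy plus Jensen in time, and conclude by Fatou's lemma, with every constant uniform in $\delta$. The only genuine divergence is how the exponent mismatch you flag is resolved. The paper linearizes via Young's inequality, $\mathcal K^q\le\tfrac12\big(1+\mathcal K^{2q}\big)$, and then applies the ordinary linear Gronwall lemma, which produces an exponential-in-$T$ constant; you instead keep the nonlinear structure, pass from $\E\,\mathcal K^q$ to $\big(\E\,\mathcal K^{2q}\big)^{1/2}$ by Cauchy--Schwarz, and solve the resulting integral inequality $m_R(t)\le a+b\int_0^t m_R(s)^{1/2}\,ds$ by the quadratic (Bihari/Osgood-type) comparison $\phi'\le b\,\phi^{1/2}$, obtaining the explicit bound $\big(\sqrt a+bT/2\big)^2$. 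Both closings are valid and uniform in $\delta$ and in the localization parameter; yours buys a polynomial rather than exponential constant (immaterial for the statement, but slightly sharper), while the paper's linearization is the more routine device. A minor further difference in your favor: you justify $\tau_R\uparrow\infty$ by pathwise continuity and local boundedness of $t\mapsto\mathcal K(f^\delta_t)$, which is cleaner than the paper's appeal to Lemma \ref{lem-kinetic}, since the latter only controls $\E\,\mathcal K(f^\delta_t)$ at each fixed time.
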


\begin{proof}
We know that $\mathcal V_\delta (f^\delta_t) \ge -C_1>-\infty$, thus by \eqref{initial-condition} and Proposition \ref{prop-kinetic}, we have
  $$\mathcal K(f^\delta_t) \le C_1 + C_0(2+ 3\kappa t) + M_t.$$
As a result, for any $q\ge 1$,
  $$\mathcal K(f^\delta_t)^{2q} \le 2^{2q-1}\big[ C_{\kappa}^{2q} (1+t)^{2q} + M_t^{2q} \big]. $$
Introducing a stopping time $\tau_n:= \inf\big\{t>0: \mathcal K(f^\delta_t)\ge n \big\}$ which is almost surely finite by Lemma \ref{lem-kinetic}; then,
  \begin{equation}\label{cor-kinetic-energy.1}
  \mathcal K(f^\delta_{t\wedge \tau_n})^{2q} \le 2^{2q-1}\big[ C_{\kappa}^{2q} (1+t)^{2q} + M_{t\wedge \tau_n}^{2q} \big]. \end{equation}
By Burkholder-Davis-Gundy inequality and Cauchy's inequality,
  $$\aligned
  \E |M_{t\wedge \tau_n}|^{2q} &\le C_q\, \E\bigg[ \int_0^{t\wedge \tau_n} \sum_k \bigg( \int_{\T^3\times \R^3} V^\delta_s \cdot \sigma_k\big(X^\delta_s \big) f^\delta_0 (x,v) \, dx dv \bigg)^2\, ds \bigg]^q \\
  &\le C_q \|f^\delta_0 \|_{L^1_{x,v}}^q \E \bigg[ \int_0^{t\wedge \tau_n} \sum_k \int_{\T^3\times \R^3} \big[ V^\delta_s \cdot \sigma_k\big(X^\delta_s \big) \big]^2 f^\delta_0 (x,v) \, dx dv ds \bigg]^q \\
  &\le C'_q \kappa^q \|f^\delta_0 \|_{L^1_{x,v}}^q \E\bigg[ \int_0^{t\wedge \tau_n} \int_{\T^3\times \R^3} \big| V^\delta_s \big|^2 f^\delta_0 (x,v) \, dx dv ds \bigg]^q,
  \endaligned $$
where the last step follows from Remark \ref{rem-velocity-eq}. The definition of the kinetic energy implies
  $$\aligned
  \E |M_{t\wedge \tau_n}|^{2q} &\le C'_q \kappa^q \|f^\delta_0 \|_{L^1_{x,v}}^q \E\bigg[ \int_0^{t\wedge \tau_n} \mathcal K(f^\delta_s)\, ds \bigg]^q \\
  &\le C'_q \kappa^q \|f^\delta_0 \|_{L^1_{x,v}}^q \E\bigg[ \int_0^t \mathcal K(f^\delta_{s\wedge \tau_n})\, ds \bigg]^q \\
  &\le C_{q,\kappa} t^{q-1} \E \int_0^t \mathcal K(f^\delta_{s\wedge \tau_n})^q\, ds \\
  &\le \frac{C_{q,\kappa} t^{q-1}}2 \bigg( t + \int_0^t \E\, \mathcal K(f^\delta_{s\wedge \tau_n})^{2q}\, ds \bigg).
  \endaligned $$
Combining this estimate with \eqref{cor-kinetic-energy.1}, and using Gronwall's inequality, we obtain
  $$\E\, \mathcal K(f^\delta_{t\wedge \tau_n})^{2q} \le C_{1,q,T} e^{C_{2,q,T}} \quad \mbox{for all } t\in [0,T]. $$
By Fatou's lemma, letting $n\to \infty$ leads to the desire result.
\end{proof}

\subsection{Tightness argument and weak convergence of approximate solutions} \label{subsec-tightness}

Next we turn to proving the tightness of laws of the family $\{f^\delta \}_{\delta>0}$ in $C\big([0,T], H^{-\epsilon}_{x,v, loc} \big)$. By Proposition \ref{prop-well-posedness-approx} and the assumption \eqref{initial-condition}, we already know that $\P$-a.s., $f^\delta\in L^\infty\big(0,T; L^2_{x,v} \big)$; thus by Simon's compactness embedding result \cite{Simon}, it suffices to show that $\{f^\delta \}_{\delta>0}$ is bounded in probability in $C^\alpha \big([0,T], H^{-4}_{x,v} \big)$ for some $\alpha\in (0,1/2)$.

First, we recall the definition of Sobolev space $H^a_{x,v}= H^a(\T^3\times \R^3)$ for $a\in \R$. Let $f\in L^1( \T^3\times \R^3, \R)$ with Fourier transform
  $$\hat f(l,\xi) =\mathcal F(f)(l,\xi) = \int_{\T^3\times \R^3} f(x,v) e^{-il\cdot x - i \xi\cdot v} \, dx dv, \quad l\in \Z^3, \xi\in \R^3.$$
We say that $f\in H^a_{x,v}$ if
  $$\|f \|_{H^a_{x,v}}^2 = \sum_l \int (1+|l|^2 +|\xi|^2)^a |\hat f(l,\xi)|^2\, d\xi <+\infty. $$

\begin{lemma}\label{lem-Holder}
For all $0\le s< t\le T$, it holds that
  $$\E \|f^\delta_t- f^\delta_s \|_{H^{-4}_{x,v}}^4 \le C|t-s|^2. $$
\end{lemma}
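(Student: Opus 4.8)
The plan is to test the It\^o form of \eqref{stoch-Vlasov-approx} against $\phi\in H^4_{x,v}$ and integrate by parts, using that the characteristic field is divergence free ($\div_x v=0$ and $\div_v(E_{\rho^\delta}+Bv\times\bm{e}_3)=0$), so that
  $$\langle f^\delta_t-f^\delta_s,\phi\rangle = -\int_s^t\langle f^\delta_r, v\cdot\nabla_x\phi\rangle\,dr -\int_s^t\langle f^\delta_r,(E_{\rho^\delta_r}+Bv\times\bm{e}_3)\cdot\nabla_v\phi\rangle\,dr +\kappa\int_s^t\langle f^\delta_r,\Delta_v\phi\rangle\,dr + \langle M_{s,t},\phi\rangle,$$
where $M_{s,t}=-\sum_k\int_s^t(\sigma_k\cdot\nabla_v f^\delta_r)\,dW^k_r$. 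Since $\|g\|_{H^{-4}_{x,v}}=\sup_{\|\phi\|_{H^4}\le1}|\langle g,\phi\rangle|$, I would bound the $H^{-4}_{x,v}$-norm of each of the three drift integrals and of $M_{s,t}$ separately and then combine via $(a+b+c+d)^4\le C(a^4+b^4+c^4+d^4)$. A guiding principle is that every constant must be kept independent of $\delta$, since the same estimate feeds the tightness argument; this is what dictates the treatment of the electric term below.

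For the transport and magnetic contributions the only nuisance is the factor $|v|$. By Cauchy--Schwarz, $|\langle f^\delta_r,v\cdot\nabla_x\phi\rangle|\le\mathcal K(f^\delta_r)^{1/2}\big(\int f^\delta_r|\nabla_x\phi|^2\big)^{1/2}$, and $\int f^\delta_r|\nabla_x\phi|^2\le\|f^\delta_r\|_{L^2}\|\nabla_x\phi\|_{L^4_{x,v}}^2\le C\|f^\delta_0\|_{L^2}\|\phi\|_{H^4}^2$ by the (non-critical) embedding $H^{3/2}(\T^3\times\R^3)\hookrightarrow L^4$. This gives $\|v\cdot\nabla_x f^\delta_r\|_{H^{-4}}\le C\,\mathcal K(f^\delta_r)^{1/2}\|f^\delta_0\|_{L^2}^{1/2}$, and the identical bound for $B(v\times\bm{e}_3)\cdot\nabla_v f^\delta_r$. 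The corrector is immediate, $\|\kappa\Delta_v f^\delta_r\|_{H^{-4}}\le\kappa\|f^\delta_r\|_{L^2}=\kappa\|f^\delta_0\|_{L^2}$, as $\Delta_v:L^2\to H^{-2}\hookrightarrow H^{-4}$. For each of these time integrals, H\"older in time yields $\big\|\int_s^t\cdot\,dr\big\|_{H^{-4}}^4\le|t-s|^3\int_s^t\|\cdot\|_{H^{-4}}^4\,dr$, and taking expectations with the moment bound $\E\,\mathcal K(f^\delta_r)^2\le C$ from Corollary \ref{cor-kinetic-energy} produces a contribution of order $|t-s|^4\le T^2|t-s|^2$.

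The main obstacle is the electric-field term, which is nonlinear and involves the singular kernel $\nabla G^\delta$; here I must avoid the $\delta$-dependent bound $\|E_{\rho^\delta_r}\|_\infty\le C_\delta$ and instead use $|\nabla G^\delta(z)|\le C|z|^{-2}$ uniformly in $\delta$. Writing $\langle f^\delta_r,E_{\rho^\delta_r}\cdot\nabla_v\phi\rangle$ and bounding $\nabla_v\phi$ in $L^\infty$ (a point where one must keep the Sobolev margin in mind, since $H^4(\T^3\times\R^3)\hookrightarrow W^{1,\infty}$ is borderline and is the only place extra care is needed) gives $|\langle f^\delta_r,E_{\rho^\delta_r}\cdot\nabla_v\phi\rangle|\le C\|\nabla_v\phi\|_\infty\int_{\T^3}\!\int_{\T^3}|x-y|^{-2}\rho^\delta_r(x)\rho^\delta_r(y)\,dxdy$. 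I would then invoke Lemma \ref{app-lem-product}(ii) with $N=3$, $p=3$ (so $r(p)=3/2$) and $a=2$, for which $\tfrac{2}{r(p)}+\tfrac{a}{N}=2$, to obtain
  \begin{equation}\label{eq-L-3}
  \int_{\T^3}\!\int_{\T^3}\frac{\rho^\delta_r(x)\,\rho^\delta_r(y)}{|x-y|^2}\,dx\,dy\le C\,\|f^\delta_r\|_{L^3_{x,v}}\,\mathcal K(f^\delta_r).
  \end{equation}
This is exactly where the uniform $L^3$-bound $\|f^\delta_r\|_{L^3}=\|f^\delta_0\|_{L^3}$ from Proposition \ref{prop-well-posedness-approx} enters, and it yields $\|E_{\rho^\delta_r}\cdot\nabla_v f^\delta_r\|_{H^{-4}}\le C\|f^\delta_0\|_{L^3}\mathcal K(f^\delta_r)$; H\"older in time together with $\E\,\mathcal K(f^\delta_r)^4\le C$ again gives a contribution of order $|t-s|^4$.

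Finally, for the martingale $M_{s,t}$ I would apply the Burkholder--Davis--Gundy inequality in the Hilbert space $H^{-4}_{x,v}$, reducing matters to the quadratic-variation integrand $\sum_k\|\sigma_k\cdot\nabla_v f^\delta_r\|_{H^{-4}}^2$. Writing $\sigma_k\cdot\nabla_v f^\delta_r=\nabla_v\cdot(f^\delta_r\sigma_k)$ (since $\div_v\sigma_k=0$) and using that $\nabla_v:H^{-3}\to H^{-4}$ is bounded, this is $\le C\sum_k\|f^\delta_r\sigma_k\|_{L^2}^2=C\int f^\delta_r(x,v)^2\big(\sum_k|\sigma_k(x)|^2\big)\,dxdv=6\kappa C\,\|f^\delta_0\|_{L^2}^2$, where $\sum_k|\sigma_k|^2=\mathrm{Tr}\,Q(0)=6\kappa$ by \eqref{covariance}. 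Hence $\E\|M_{s,t}\|_{H^{-4}}^4\le C\,\E\big(\int_s^t 6\kappa C\|f^\delta_0\|_{L^2}^2\,dr\big)^2\le C|t-s|^2$, and this is the term that produces the sharp exponent; summing the four pieces and absorbing $|t-s|^4\le T^2|t-s|^2$ completes the proof. The two delicate points I anticipate are the uniformity of all constants in $\delta$ (which forces \eqref{eq-L-3} in place of naive bounds on $E_{\rho^\delta}$) and the borderline Sobolev control of $\nabla_v\phi$ in the electric term.
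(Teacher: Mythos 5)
Your proof follows the same architecture as the paper's: the identical four-term decomposition of $f^\delta_t-f^\delta_s$, H\"older in time for the three drift integrals (each contributing order $|t-s|^4$), Burkholder--Davis--Gundy for the stochastic integral, which is indeed the term producing the sharp power $|t-s|^2$, the moment bounds of Corollary \ref{cor-kinetic-energy}, and, for the electric term, exactly the paper's key ingredient: the $\delta$-uniform bound $|\nabla G^\delta(z)|\lesssim|z|^{-2}$ combined with Lemma \ref{app-lem-product}(ii) with $p=3$, $a=2$, which is precisely \eqref{eq-L-3}. Your handling of the transport, magnetic, corrector and martingale terms is correct: duality against $\phi\in H^4_{x,v}$, Cauchy--Schwarz with respect to the measure $f^\delta_r\,dx\,dv$ together with the (valid, in fact critical) embedding $H^{3/2}(\T^3\times\R^3)\hookrightarrow L^4$, and, for the noise term, the identity $\sigma_k\cdot\nabla_v f^\delta_r=\div_v(f^\delta_r\sigma_k)$, the trace identity $\sum_k|\sigma_k|^2=\mathrm{Tr}\,Q(0)=6\kappa$ from \eqref{covariance}, and the conservation of $\|f^\delta_r\|_{L^2_{x,v}}$. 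These $L^2$-based operator bounds replace the paper's device of estimating Fourier transforms pointwise by $L^1$-type quantities and then integrating against the $H^{-4}$ weight; both routes are legitimate for those three terms.

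The electric term, however, contains a genuine gap. The reduction $|\langle f^\delta_r,E_{\rho^\delta_r}\cdot\nabla_v\phi\rangle|\le\|\nabla_v\phi\|_{L^\infty}\int\!\!\int|x-y|^{-2}\rho^\delta_r(x)\rho^\delta_r(y)\,dx\,dy$ requires $\|\nabla_v\phi\|_{L^\infty}\lesssim\|\phi\|_{H^4_{x,v}}$, i.e.\ the embedding $H^{3}(\T^3\times\R^3)\hookrightarrow L^\infty$. The phase space has dimension $6$ and $3=6/2$, so this is the critical case of the Sobolev embedding into $L^\infty$, which is \emph{false}; your parenthetical flags it as ``borderline'' but does not resolve it, and it is not a removable technicality within your scheme: the natural substitute, Cauchy--Schwarz with respect to $f^\delta_r\,dx\,dv$, leads to $\int\rho^\delta_r|E_{\rho^\delta_r}|^2\,dx$, which is not controlled by the available uniform bounds (only $\rho^\delta_r\in L^1_x\cap L^{3/2}_x$, by Lemma \ref{app-lem-density} with $p=3$). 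The paper circumvents this by never measuring $\nabla_v\phi$ in $L^\infty$: it writes $E_{\rho^\delta_r}\cdot\nabla_v f^\delta_r=\div_v\big(E_{\rho^\delta_r}f^\delta_r\big)$, bounds its Fourier transform pointwise by $|\xi|\,\|f_0\|_{L^3}\,\mathcal K(f^\delta_r)$ via \eqref{eq-L-3}, and lets the negative-Sobolev weight absorb the factor $|\xi|$. The simplest repair of your argument is to weaken the space in the statement, proving the estimate in $H^{-5}_{x,v}$ (or $H^{-4-\epsilon}_{x,v}$) instead of $H^{-4}_{x,v}$: then $\|\nabla_v\phi\|_{L^\infty}\lesssim\|\phi\|_{H^{5}_{x,v}}$ is a valid subcritical embedding, every other step of yours goes through verbatim, and the tightness argument of Section \ref{subsec-tightness} is insensitive to which fixed negative index is used. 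It is worth noting that this weakening is needed for the paper's own computation as well: the weighted sums used there, such as $\sum_{l}\int(1+|l|^2+|\xi|^2)^{-4}|\xi|^2\,d\xi=C\sum_{l\in\Z^3}(1+|l|^2)^{-3/2}$, diverge logarithmically over $l\in\Z^3$, so the exponent $-4$ is exactly borderline in the paper too, and passing to $-5$ (or $-4-\epsilon$) closes both proofs simultaneously.
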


\begin{proof}
By the equation for $f^\delta_t$, we have
  $$\aligned
  f^\delta_t- f^\delta_s &= -\int_s^t v\cdot \nabla_x f^\delta_r\, dr - \int_s^t (E_{\rho^\delta_r} + Bv\times \bm{e}_3) \cdot \nabla_v f^\delta_r\, dr \\
  &\quad + \kappa \int_s^t \Delta_v f^\delta_r\, dr - \sum_k \int_s^t \sigma_k \cdot \nabla_v f^\delta_r\, d W^k_r.
  \endaligned $$
We denote the four terms on the right-hand side by $I_i$, $i=1,2,3,4$. First,
  $$\|I_1 \|_{H^{-4}_{x,v}}^4 \le |t-s|^3 \int_s^t \|v\cdot \nabla_x f^\delta_r \|_{H^{-4}_{x,v}}^4\, dr. $$
By definition,
  $$\mathcal F(v\cdot \nabla_x f^\delta_r)(l,\xi)= i \int_{\T^3\times \R^3} (v\cdot l) f^\delta_r(x,v) e^{-il\cdot x - i \xi\cdot v} \, dx dv; $$
Cauchy's inequality implies
  $$\big|\mathcal F(v\cdot \nabla_x f^\delta_r)(l,\xi) \big|^2 \le \bigg(\int_{\T^3\times \R^3} |v\cdot l| f^\delta_r(x,v) \, dx dv \bigg)^2 \le |l|^2 \|f^\delta_r\|_{L^1_{x,v}} \mathcal K(f^\delta_r). $$
As a result,
  $$\|v\cdot \nabla_x f^\delta_r \|_{H^{-4}_{x,v}}^2 \le \sum_l \int (1+|l|^2 +|\xi|^2)^{-4} |l|^2 \|f^\delta_r\|_{L^1_{x,v}} \mathcal K(f^\delta_r) \, d\xi \le C \|f^\delta_0 \|_{L^1_{x,v}} \mathcal K(f^\delta_r),$$
where $C$ is a dimensional constant. Therefore, by \eqref{initial-condition}, we have
  \begin{equation}\label{lem-Holder.1}
  \E \|I_1 \|_{H^{-4}_{x,v}}^4 \lesssim |t-s|^3 \int_s^t \E\, \mathcal K(f^\delta_r)^2 \, dr \lesssim_T |t-s|^4,
  \end{equation}
where the last step is due to Corollary \ref{cor-kinetic-energy}.

Concerning the term $I_2$, we first consider the part $I_{2,1}$ involving $E_{\rho^\delta_r}$:
  $$\|I_{2,1} \|_{H^{-4}_{x,v}}^4 \le |t-s|^3 \int_s^t \big\|E_{\rho^\delta_r}\cdot \nabla_v f^\delta_r \big\|_{H^{-4}_{x,v}}^4\, dr. $$
We have
  $$\mathcal F\big(E_{\rho^\delta_r}\cdot \nabla_v f^\delta_r \big)(l,\xi)= i \int_{\T^3\times \R^3} \big(E_{\rho^\delta_r}(x) \cdot \xi \big) f^\delta_r(x,v) e^{-il\cdot x - i \xi\cdot v} \, dx dv, $$
thus
  $$\aligned
  \big| \mathcal F\big(E_{\rho^\delta_r}\cdot \nabla_v f^\delta_r \big)(l,\xi) \big|
  &\le |\xi| \int_{\T^3\times \R^3} \big|E_{\rho^\delta_r}(x) \big| f^\delta_r(x,v) \, dx dv \\
  &= |\xi| \int_{\T^3\times \R^3} \big| \nabla G^\delta \ast\rho^\delta_r (x) \big| f^\delta_r(x,v) \, dxdv  \\
  &\le |\xi| \int_{(\T \times \R)^6} \frac{C}{|x-y|^2} f^\delta_r(y,w) f^\delta_r(x,v)\, dx dv dy dw,
  \endaligned $$
where $C$ is independent of $\delta>0$. Applying Lemma \ref{app-lem-product}(ii) with $p=3$, we obtain
  \begin{equation}\label{eq-L-3}
  \big| \mathcal F\big(E_{\rho^\delta_r}\cdot \nabla_v f^\delta_r \big)(l,\xi) \big| \lesssim |\xi|\, \|f^\delta_r\|_{L^3_{x,v}} \mathcal K(f^\delta_r) = |\xi|\, \|f^\delta_0\|_{L^3_{x,v}} \mathcal K(f^\delta_r) .
  \end{equation}
By \eqref{initial-condition}, we have
  $$\big\|E_{\rho^\delta_r}\cdot \nabla_v f^\delta_r \big\|_{H^{-4}_{x,v}}^2 \lesssim \sum_l \int (1+|l|^2 +|\xi|^2)^{-4}  |\xi|^2 \mathcal K(f^\delta_r)^2 \, d\xi \lesssim \mathcal K(f^\delta_r)^2. $$
As a consequence,
  $$\E \|I_{2,1} \|_{H^{-4}_{x,v}}^4 \lesssim |t-s|^3 \E \int_s^t \mathcal K(f^\delta_r)^4\, dr \lesssim_T |t-s|^4, $$
where the last step is again due to Corollary \ref{cor-kinetic-energy}. The estimate of the part involving $v\times \bm{e}_3$ in $I_2$ is similar, indeed, we have
  $$\big| \mathcal F\big((v\times \bm{e}_3)\cdot \nabla_v f^\delta_r \big)(l,\xi) \big| \le |\xi|\, \|f^\delta_0\|_{L^1_{x,v}}^{1/2} \mathcal K(f^\delta_r)^{1/2}$$
and thus $\big\|(v\times \bm{e}_3)\cdot \nabla_v f^\delta_r \big\|_{H^{-4}_{x,v}}^2 \lesssim \mathcal K(f^\delta_r) $, which immediately gives us the desired estimate.

Next, the treatment of $I_3$ is very easy; in fact, one can easily show that $\big\|\Delta_v f^\delta_r \big\|_{H^{-4}_{x,v}}^2 \le \|f^\delta_0\|_{L^1_{x,v}}$ which, combined with \eqref{initial-condition}, implies $\P$-a.s.,
  $$\|I_3 \|_{H^{-4}_{x,v}}^4 \lesssim |t-s|^4. $$

Finally, we deal with the stochastic term $I_4$:
  $$\E\|I_4 \|_{H^{-4}_{x,v}}^4 \lesssim \E \bigg(\int_s^t \sum_k \big\|\sigma_k \cdot \nabla_v f^\delta_r \big\|_{H^{-4}_{x,v}}^2\, dr \bigg)^2. $$
One has $\big| \mathcal F\big(E_{\rho^\delta_r}\cdot \nabla_v f^\delta_r \big)(l,\xi) \big| \lesssim |\xi| \int |\sigma_k(x)| f^\delta_r(x,v)\, dx dv$, and thus by Cauchy's inequality,
  $$\big| \mathcal F\big(E_{\rho^\delta_r}\cdot \nabla_v f^\delta_r \big)(l,\xi) \big|^2 \lesssim |\xi|^2 \|f^\delta_r\|_{L^1_{x,v}} \int |\sigma_k(x)|^2 f^\delta_r(x,v)\, dx dv. $$
Consequently,
  $$\aligned
  \sum_k \big\|\sigma_k \cdot \nabla_v f^\delta_r \big\|_{H^{-4}_{x,v}}^2
  &\lesssim \sum_k \sum_l \int(1+|l|^2 +|\xi|^2)^{-4} |\xi|^2 \int |\sigma_k(x)|^2 f^\delta_r(x,v)\, dx dv d\xi \\
  &\le 6\kappa \sum_l \int(1+|l|^2 +|\xi|^2)^{-3} \int f^\delta_r(x,v)\, dx dv d\xi \\
  &\lesssim \kappa \|f^\delta_0\|_{L^1_{x,v}} \le C \kappa
  \endaligned $$
by assumption \eqref{initial-condition}. Therefore, $\E\|I_4 \|_{H^{-4}_{x,v}}^4 \lesssim \kappa^2 |t-s|^2$. Summarizing the above estimates, we complete the proof.
\end{proof}

Thanks to Lemma \ref{lem-Holder} and the discussions at the beginning of Section \ref{subsec-tightness}, we deduce from Simon's compactness theorem \cite{Simon} that the family of laws of $\{f^\delta\}_{\delta>0}$ is tight in
  $$\mathcal X:= C\big([0,T], H^{-\epsilon }_{x,v, loc} \big)$$
for any $\epsilon>0$. By Prohorov's theorem, we can find a subsequence $\delta_n\to 0$ as $n\to \infty$, such that the laws $\eta_n$ of $f^{\delta_n}$ converges weakly, in the topology of $\mathcal X$, to some probability measure $\eta\in \mathcal P(\mathcal X)$. By Skorohod's representation theorem, there exist a new probability space $\big(\tilde\Omega, \tilde{\mathcal{F}}, \tilde\P \big)$, a sequence of stochastic processes $\big\{\tilde f^n \big\}_{n\ge 1}$ and a limit process $\tilde f$, such that
\begin{itemize}
\item[(i)] for any $n\ge 1$, $\tilde f^n$ has the same law $\eta_n$ as $f^{\delta_n}$;
\item[(ii)] $\tilde\P$-a.s., $\tilde f^n$ converges in the topology of $\mathcal X$ to $\tilde f$.
\end{itemize}
By item (i), we know that for all $n\ge 1$ and $p\in [1,2]$,
  $$\tilde\P \mbox{-a.s.}, \quad \big\|\tilde f^n_t \big\|_{L^p_{x,v}} = \big\|f^{\delta_n}_t \big\|_{L^p_{x,v}}= \big\|f^{\delta_n}_0 \big\|_{L^p_{x,v}} \quad \mbox{for all } t\in [0,T], $$
and for all $q\ge 1$,
  $$\tilde\E \mathcal K\big(\tilde f^n_t \big)= \E \mathcal K\big(f^{\delta_n}_t \big) \le C_{q,T}\quad \mbox{for all } t\in [0,T] . $$
Thanks to \eqref{initial-condition},  passing to a further subsequence if necessary, we can deduce related bounds for the limit process $\{\tilde f_t\}_{t\in [0,T]}$; namely, for all $p\in [1,2]$,
  $$\tilde\P \mbox{-a.s.}, \quad \big\|\tilde f_t \big\|_{L^p_{x,v}} \le C_0 <\infty \quad \mbox{for all } t\in [0,T] $$
and for all $q\ge 1$,
  $$\tilde\E \mathcal K\big(\tilde f_t \big) \le C_{q,T}\quad \mbox{for all } t\in [0,T] . $$
Therefore, the properties (a) and (b) in Theorem \ref{thm-existence} are satisfied.

With slightly more effort, we can also find a sequence of Brownian motions $\big\{ \tilde B^{n,k} \big\}_{n,k\ge 1}$ and limit Brownian motions $\{\tilde B^{k}\}_{k\ge 1}$ on $\tilde\Omega$, such that
  \begin{equation}\label{eq-law}
  \big(\tilde f^n, ( \tilde B^{n,k} )_{k\ge 1}\big)\ \stackrel{\mathcal L}{\sim}\ \big(f^{\delta_n}, ( W^k )_{k\ge 1}\big) \quad \mbox{for all } n\ge 1,
  \end{equation}
and $\tilde\P$-a.s., for any $k\ge 1$, $\tilde B^{n,k}$ converges in $C([0,T],\R)$ to $\tilde B^{k}$. We omit the details here, cf. \cite[Section 2]{FGL21a}.

Now we start proving that $\big(\tilde f, \{\tilde B^{k}\}_{k\ge 1}\big)$ is a weak solution to \eqref{stoch-Vlasov-Ito}. Let $\<\cdot, \cdot\>$ be the inner product in $L^2(\T^3\times \R^3)$. First, for any $\phi\in C_c^\infty (\T^3\times \R^3)$, by \eqref{eq-law} and the It\^o formulation of \eqref{stoch-Vlasov-approx}, we know that $\tilde\P$-a.s. for all $t\ge 0$, it holds
  $$\aligned
  \big\<\tilde f^n_t, \phi\big\> &= \big\<f^{\delta_n}_0, \phi\big\> + \int_0^t \big\<\tilde f^n_s, v\cdot\nabla_x \phi \big\>\, ds + \int_0^t \big\<\tilde f^n_s, \big(E_{\tilde \rho^n_s} + B(v\times \bm{e}_3) \big) \cdot\nabla_v \phi \big\> \, ds\\
  &\quad + \sum_k \int_0^t \big\<\tilde f^n_s, \sigma_k\cdot\nabla_v \phi \big\>\, d \tilde B^{n,k}_s + \kappa \int_0^t \big\<\tilde f^n_s, \Delta_v \phi \big\>\, ds,
  \endaligned $$
where $\tilde\rho^n_s(x)= \int \tilde f^n_s(x,v)\, dv$. Thanks to the above discussions, as $n\to\infty$, all the terms converge to the corresponding ones involving the limit process $\tilde f$, except the nonlinear part
  $$J_n:= \int_0^t \big\<\tilde f^n_s, E_{\tilde \rho^n_s} \cdot\nabla_v \phi \big\> \, ds.$$
Thus we focus on showing the convergence of $J_n$, which can be further expressed as
  $$J_n:= \int_0^t\! \int_{(\T\times\R)^6} \nabla G^{\delta_n}(x-y)\cdot \nabla_v\phi(x,v)\tilde f^n_s(x,v)\tilde f^n_s(y,w) \, dx dv dy dw ds,$$
where $(\T\times\R)^6= (\T^3\times \R^3) \times (\T^3\times \R^3)$. Let
  $$J:= \int_0^t\! \int_{(\T\times\R)^6} \nabla G(x-y)\cdot \nabla_v\phi(x,v)\tilde f_s(x,v)\tilde f_s(y,w) \, dx dv dy dw ds,$$
then
  $$\aligned
  J_n-J &= \int_0^t\! \int_{(\T\times\R)^6} \big(\nabla G^{\delta_n}(x-y)- \nabla G(x-y) \big)\cdot \nabla_v\phi(x,v)\tilde f^n_s(x,v)\tilde f^n_s(y,w) \, dx dv dy dw ds\\
  &\quad +\int_0^t\! \int_{(\T\times\R)^6} \nabla G(x-y)\cdot \nabla_v\phi(x,v) \big(\tilde f^n_s(x,v) - \tilde f_s(x,v) \big) \tilde f^n_s(y,w) \, dx dv dy dw ds\\
  &\quad +\int_0^t\! \int_{(\T\times\R)^6} \nabla G(x-y)\cdot \nabla_v\phi(x,v) \tilde f_s(x,v)\big(\tilde f^n_s(y,w) - \tilde f_s(y,w) \big) \, dx dv dy dw ds\\
  &=: J_n^1 + J_n^2 + J_n^3.
  \endaligned $$

First, letting $\tilde\rho^{n,\phi}_s (x)= \int_{\R^3} \nabla_v\phi(x,v)\, \tilde f^n_s(x,v)\, dv$ where the integral is in fact on a compact subset in $\R^3$, then
  $$J_n^1= \int_0^t\! \int_{\R^3} \tilde\rho^{n,\phi}_s (x) (\nabla G^{\delta_n} - \nabla G)\ast \tilde\rho^n_s (x)\, dx ds; $$
by Cauchy's inequality and Young's inequality,
  $$\aligned
  |J_n^1|&\le \int_0^t \big\| \tilde\rho^{n,\phi}_s \big\|_{L^2} \big\| (\nabla G^{\delta_n} - \nabla G)\ast \tilde\rho^n_s \big\|_{L^2} \, ds \\
  &\le \int_0^t \big\| \tilde\rho^{n,\phi}_s \big\|_{L^2} \big\|\nabla G^{\delta_n} - \nabla G \big\|_{L^q} \big\| \tilde\rho^n_s \big\|_{L^{r(2)}} \, ds,
  \endaligned $$
where the parameters satisfy $1+ \frac12= \frac1q + \frac1{r(2)}$ and, by Lemma \ref{app-lem-density}, $r(2)= \frac75$; therefore, $q= \frac{14}{11} < \frac32$. Using Lemmas \ref{app-lem-compact} and \ref{app-lem-density}, we have
  $$\big\| \tilde\rho^{n,\phi}_s \big\|_{L^2}\le C_\phi \big\| \tilde f^{n}_s \big\|_{L^2}, \quad \big\| \tilde\rho^n_s \big\|_{L^{r(2)}} \le C' \big\| \tilde f^n_s \big\|_{L^{2}}^{4/7} \mathcal K\big(\tilde f^n_s \big)^{3/7}, $$
hence,
  $$\aligned
  |J_n^1| &\le C'_\phi \big\|\nabla G^{\delta_n} - \nabla G \big\|_{L^{14/11}} \int_0^t \big\| \tilde f^n_s \big\|_{L^{2}}^{11/7} \mathcal K\big(\tilde f^n_s \big)^{3/7} \, ds \\
  &= C'_\phi \big\|\nabla G^{\delta_n} - \nabla G \big\|_{L^{14/11}} \big\| f^{\delta_n}_0 \big\|_{L^{2}}^{11/7} \int_0^t \mathcal K\big(\tilde f^n_s \big)^{3/7} \, ds.
  \endaligned $$
By the properties of $G$, we know that $\big\|\nabla G^{\delta_n} - \nabla G \big\|_{L^{14/11}}\to 0$ as $n\to \infty$; combining with \eqref{initial-condition} and Corollary \ref{cor-kinetic-energy}, we conclude that
  $$\lim_{n\to \infty} \tilde \E|J_n^1|=0. $$

Next, $J_n^2$ can be rewritten as
  $$\aligned
  J_n^2 &= \int_0^t\! \int_{\T^3\times\R^3} \big(\tilde f^n_s(x,v) - \tilde f_s(x,v) \big)\nabla_v\phi(x,v) \cdot \bigg[\int_{\T^3} \nabla G(x-y) \tilde \rho^n_s(y) dy\bigg] \, dx dv ds \\
  &= \int_0^t\! \int_{\T^3\times\R^3} \big(\tilde f^n_s(x,v) - \tilde f_s(x,v) \big)\nabla_v\phi(x,v) \cdot E_{\tilde \rho^n_s}(x) \, dx dv ds.
  \endaligned $$
Note that $\nabla_v\phi$ has compact support, thus we can regard $\tilde f^n_s - \tilde f_s$ as a compactly supported distribution, and
  \begin{equation}\label{eq-J-n-2}
  \aligned
  |J_n^2| &\le \int_0^t\!  \big\|\tilde f^n_s - \tilde f_s \big\|_{H^{-\epsilon}_{x,v,loc}} \big\|\nabla_v\phi \cdot E_{\tilde \rho^n_s} \big\|_{H^\epsilon_{x,v}} ds.
  \endaligned
  \end{equation}
We have
  $$\aligned
  \big\|\nabla_v\phi \cdot E_{\tilde \rho^n_s} \big\|_{H^\epsilon_{x,v}}^2
  &= \sum_l \int (1+|l|^2 +|\xi|^2)^\epsilon \big|\mathcal F\big(\nabla_v\phi \cdot E_{\tilde \rho^n_s} \big)(l,\xi) \big|^2\, d\xi \\
  &= \sum_l \int (1+|l|^2 +|\xi|^2)^\epsilon \big|\mathcal F_x\big(E_{\tilde \rho^n_s}\cdot \mathcal F_v (\nabla_v\phi(x,\cdot))(\xi) \big)(l) \big|^2\, d\xi,
  \endaligned $$
where $\mathcal F_v (\nabla_v\phi(x,\cdot))(\xi)$ means the Fourier transform of $\nabla_v\phi(x,\cdot)$ with respect to the velocity variable $v$, similarly for $\mathcal F_x$. Therefore,
  $$\aligned
  \big\|\nabla_v\phi \cdot E_{\tilde \rho^n_s} \big\|_{H^\epsilon_{x,v}}^2
  &\le \int (1+|\xi|^2)^\epsilon \sum_l (1+|l|^2)^\epsilon \big|\mathcal F_x\big(E_{\tilde \rho^n_s}\cdot \mathcal F_v (\nabla_v\phi(x,\cdot))(\xi) \big)(l) \big|^2\, d\xi \\
  &= \int (1+|\xi|^2)^\epsilon \big\| E_{\tilde \rho^n_s}\cdot \mathcal F_v (\nabla_v\phi(x,\cdot))(\xi) \big\|_{H^\epsilon_x}^2\, d\xi \\
  &\lesssim_\epsilon \int (1+|\xi|^2)^\epsilon \big\| E_{\tilde \rho^n_s}\big\|_{H^{2\epsilon}_x}^2 \big\| \mathcal F_v (\nabla_v\phi(x,\cdot))(\xi) \big\|_{H^{3/2- \epsilon}_x}^2\, d\xi,
  \endaligned $$
where in the last step we have used the product rule of Sobolev functions. As a result,
  $$\aligned
  \big\|\nabla_v\phi \cdot E_{\tilde \rho^n_s} \big\|_{H^\epsilon_{x,v}}^2
  &\le \big\| E_{\tilde \rho^n_s}\big\|_{H^{2\epsilon}_x}^2 \int (1+|\xi|^2)^\epsilon \sum_l (1+|l|^2)^{3/2-\epsilon} \big| \mathcal F_x\big(\mathcal F_v (\nabla_v\phi(x,\cdot))(\xi) \big)(l) \big|^2\, d\xi \\
  &\le \big\| E_{\tilde \rho^n_s}\big\|_{H^{2\epsilon}_x}^2 \sum_l \int (1+ |l|^2+ |\xi|^2)^{3/2} \big| \mathcal F_{x,v} (\nabla_v\phi)(l,\xi) \big|^2\, d\xi \\
  &= \big\| E_{\tilde \rho^n_s}\big\|_{H^{2\epsilon}_x}^2 \| \nabla_v\phi \|_{H^{3/2}_{x,v}}^2.
  \endaligned $$
Substituting this estimate into \eqref{eq-J-n-2} leads to
  $$|J_n^2| \lesssim \int_0^t\!  \big\|\tilde f^n_s - \tilde f_s \big\|_{H^{-\epsilon}_{x,v,loc}} \big\| E_{\tilde \rho^n_s}\big\|_{H^{2\epsilon}_x} \| \nabla_v\phi \|_{H^{3/2}_{x,v}}\, ds. $$

Since $\int E_{\tilde \rho^n_s}(x) \, dx=0$, we have
  $$\big\| E_{\tilde \rho^n_s} \big\|_{H^{2\epsilon}_{x}}^2\lesssim \sum_{l\ne 0} |l|^{4\epsilon} \big|\mathcal F(\nabla G\ast \tilde \rho^n_s)(l) \big|^2 \lesssim \sum_{l\ne 0} |l|^{4\epsilon-2} \big|\mathcal F(\tilde \rho^n_s)(l) \big|^2 \lesssim \|\tilde \rho^n_s \|_{H^{2\epsilon-1}_{x}}^2. $$
By Sobolev embedding, $L^{7/5}_x \subset H^{-9/14}_{x} \subset H^{2\epsilon-1}_{x}$ for $\epsilon \in (0,5/28]$, thus
  $$\big\| E_{\tilde \rho^n_s} \big\|_{H^{2\epsilon}_{x}} \lesssim \|\tilde \rho^n_s \|_{L^{7/5}_{x}} \lesssim \big\|\tilde f^n_s \big\|_{L^2_{x}}^{4/7} \mathcal K\big(\tilde f^n_s\big)^{3/7}, $$
where the last step follows from Lemma \ref{app-lem-density}. Summarizing these discussions, we obtain
  $$\aligned
  |J_n^2| &\le \|\nabla_v\phi \|_{H^{3/2- \epsilon}_{x,v}} \big\|f^{\delta_n}_0 \big\|_{L^2_{x}}^{4/7} \int_0^t \big\|\tilde f^n_s - \tilde f_s \big\|_{H^{-\epsilon}_{x,v,loc}} \mathcal K\big(\tilde f^n_s\big)^{3/7}\, ds.
  \endaligned $$
By assumption \eqref{initial-condition}, Corollary \ref{cor-kinetic-energy} and item (ii) above, we deduce that
  $$\lim_{n\to \infty} \tilde \E|J_n^2|=0. $$

Finally, the convergence of $J_n^3$ follows from the weak convergence of $\tilde f^n$ to $\tilde f$, up to a further subsequence, thus we complete the proof of weak existence of solutions to \eqref{stoch-Vlasov-Ito}.

\section{Proof of Theorem \ref{thm-limit}}\label{sec-proof-limit}

By Theorem \ref{thm-existence}, the solutions $\{f^N \}_N$ to \eqref{stoch-Vlasov-Ito-N} verify the following bounds, uniformly in $N$:
\begin{itemize}
\item[\rm(a')] for any $N\ge 1$ and $p\in [1, 3]$, $\P$-a.s. $\|f^N_t\|_{L^p_{x,v}} \le \|f_0 \|_{L^p_{x,v}}$ for all $t\in [0,T]$;
\item[\rm(b')] for any $N\ge 1$ and $q\ge 1$, $\E \mathcal K(f^N_t)^{2q} \le C_{q,T}$  for all $t\in [0,T]$.
\end{itemize}
Note that the solutions $\{f^N \}_N$ might be defined on different probability spaces $\big(\Omega^N, \mathcal F^N, \P^N \big)$, but we do not distinguish the notations $\P^N $ or $\E^N$ for simplicity of notation.

Thanks to the above bounds, using the equation \eqref{stoch-Vlasov-Ito-N} and following the computations in the proof of Lemma \ref{lem-Holder}, we can show that the laws of the family $\{f^N\}_N$ are tight on
  $$\mathcal X:= C\big([0,T], H^{-\epsilon }_{x,v, loc} \big).$$
Therefore, we can repeat the arguments at the end of the last section to show that, along a subsequence, $\{f^N\}_N$ converges weakly, in the topology of $\mathcal X$, to some limit $\bar f$. To show that $\bar f$ solves \eqref{limit-Vlasov} in the weak sense, we use the weak formulation of  \eqref{stoch-Vlasov-Ito-N}: for any test function $\phi\in C_c^\infty(\T^3\times \R^3)$,
  $$\aligned
  \big\<f^N_t, \phi\big\> &= \big\<f_0, \phi\big\> + \int_0^t \big\<f^N_s, v\cdot\nabla_x \phi \big\>\, ds + \int_0^t \big\<f^N_s, \big(E_{\rho^N_s} + B(v\times \bm{e}_3) \big) \cdot\nabla_v \phi \big\> \, ds\\
  &\quad + \sum_k \int_0^t \big\<f^N_s, \sigma^N_k\cdot\nabla_v \phi \big\>\, d W^k_s + \kappa \int_0^t \big\<f^N_s, \Delta_v \phi \big\>\, ds,
  \endaligned $$
where $\rho^N_t(x)= \int f^N_t(x,v)\, dv$. Since all the terms, except the martingale part, can be treated similarly as in Section 2, here we only show that the martingale part
  $$\bar M_t:= \sum_k\int_0^t \big\<f^N_s, \sigma^N_k \cdot\nabla_v \phi\big\>\, d W^k_s$$
vanishes in a suitable sense. The argument is by now classical: by the It\^o isometry,
  $$\aligned
  \E \bar M_t^2 &= \sum_k \E \int_0^t \big\<f^N_s, \sigma^N_k \cdot\nabla_v \phi\big\>^2\, d s \\
  &= \sum_k \E \int_0^t\! \int_{(\T\times \R)^6} \big(f^N_s \nabla_v \phi\big)(x,v) \sigma^N_k(x)\otimes \sigma^N_k(y) \big(f^N_s \nabla_v \phi\big)(y,w)\,dx dv dy dw d s \\
  &= \E \int_0^t\! \int_{(\T\times \R)^6} \big(f^N_s \nabla_v \phi\big)(x,v)^\ast Q_N(x-y) \big(f^N_s \nabla_v \phi\big)(y,w)\,dx dv dy dw d s,
  \endaligned $$
where $\ast$ means transposition of vectors and the last step is due to \eqref{covariance-N}. Letting
  $$A^N_s(x):= \int_{\R^3} \big(f^N_s \nabla_v \phi\big)(x,v)\, dv, \quad (s,x)\in [0,T]\times \T^3, $$
then by H\"older's inequality and Young's inequality,
  $$\aligned
  \E \bar M_t^2& = \E \int_0^t\! \int_{\T^3} A^N_s(x)^\ast\, \big(Q_N\ast A^N_s \big)(x)\, dx ds\\
  &\le  \E \int_0^t \big\| A^N_s \big\|_{L^{7/5}_x} \big\|Q_N\ast A^N_s \big\|_{L^{7/2}_x} \, ds \\
  &\le \|Q_N \|_{L^{7/4}_x}\, \E \int_0^t \big\| A^N_s \big\|_{L^{7/5}_x}^2 \, ds.
  \endaligned $$
Note that
  $$\big\| A^N_s \big\|_{L^{7/5}_x} \le \| \nabla_v \phi\|_{L^\infty_{x,v}} \big\| \rho^N_s \big\|_{L^{7/5}_x} \lesssim_\phi \big\| f^N_s \big\|_{L^{2}_x}^{4/7}\, \mathcal K\big(f^N_s \big)^{3/7}, $$
where we have used Lemma \ref{app-lem-density} in the last step. As a result,
  $$\aligned
  \E \bar M_t^2&\lesssim_\phi \|Q_N \|_{L^{7/4}_x}\, \E \int_0^t \big\| f^N_s \big\|_{L^{2}_x}^{8/7}\, \mathcal K\big(f^N_s \big)^{6/7} \, ds \\
  &\le \|Q_N \|_{L^{7/4}_x} \| f_0 \|_{L^{2}_x}^{8/7} \int_0^t \E\, \mathcal K\big(f^N_s \big)^{6/7} \, ds \\
  &\lesssim_T \|Q_N \|_{L^{7/4}_x} \to 0
  \endaligned  $$
as $N\to \infty$, thanks to \eqref{condition-covariance}.

\begin{remark}
As mentioned before, we cannot prove that the limit equation \eqref{limit-Vlasov} admits a unique weak solution in the class of functions satisfying (a') and (b'), therefore, we do not know whether the whole sequence $\{f^N\}_N$ converges to the same limit.
\end{remark}

\section{A particular noise in Vlasov equations} \label{sec-particular-noise}

In this section we describe the origin and motivation for introducing a noise
$\partial_{t}W (x,t)  $, with certain properties, in the Vlasov
equation%
\begin{equation} \label{Vlasov stochastic}
\partial_{t}f+ \div_x(v f) + \div_v((E + v\times B)f) + \div_{v}(\partial_{t}W(
x,t)  \circ f)  =0.
\end{equation}
A noise in that particular transport form may come either from fluctuations of
the magnetic field, or fluctuations of the electric field (or both). We
concentrate our attention on the electric field only. In the case of the
magnetic field, it could be reasonable to assume that the noise is divergence
free (similarly to many recent investigations of transport noise in fluid
mechanic models, see e.g. \cite{Gal, FGL21a}). But being related to the electric field, it is a gradient in
the space variable, hence we cannot assume it to be divergence free.

Before we enter into some detail, let us explain a few elements of the idea.
The noise comes from the electric field%
\[
E\left(  x,t\right)  =\int_{\mathbb{T}^{3}}\nabla G\left(  x-y\right)
\rho\left(  y,t\right)  dy,
\]
where $G$ is the Green kernel on $\mathbb{T}^{3}$. We may think that
$\rho\left(  y,t\right)  $ is composed of a slowly varying part $\overline
{\rho}\left(  y,t\right)  $ plus a rapidly fluctuating one $\rho^{\prime
}\left(  y,t\right)  $, so that the same holds for the electric field:
\begin{align*}
E\left(  x,t\right)   &  =\int_{\mathbb{T}^{3}}\nabla G\left(  x-y\right)
\overline{\rho}\left(  y,t\right)  dy
+\int_{\mathbb{T}^{3}}\nabla G\left(  x-y\right)  \rho^{\prime}\left(
y,t\right)  dy.
\end{align*}
Thus it would be natural to assume that $\rho^{\prime}\left(  y,t\right)  $ is
a noise. However, a noise is typically a zero-mean (the mean of the signal
being incorporated into $\overline{\rho}\left(  y,t\right)  $). In the
particular case of the space density $\rho\left(  y,t\right)  $, it is not so
clear that its natural fluctuations are zero mean. A superficial reason is
that $\rho\left(  y,t\right)  \geq0$, thus decomposing it into $\overline
{\rho}\left(  y,t\right)  \geq0$ and $\rho^{\prime}\left(  y,t\right)  $ would
impose a complicated constraint on $\rho^{\prime}\left(  y,t\right)  $. More deeply, the model of $\rho^{\prime}\left(  y,t\right)  $ should correspond to the information we have on the coherent structures of electrons
and ions, information that is accumulating in the physical literature, see for
instance \cite{Hutch1, Hutch2}, and may lead us to consider
non-zero-mean perturbations $\rho^{\prime}\left(  y,t\right)  $. Therefore, we develop a scheme which
does not require $\rho^{\prime}\left(  y,t\right)  $ to be zero mean. The key
remark is that, assuming $\rho^{\prime}\left(  y,t\right)  $
space-homogeneous, the term
\[
E^{\prime}\left(  x,t\right)  =\int_{\mathbb{T}^{3}}\nabla G\left(
x-y\right)  \rho^{\prime}\left(  y,t\right)  dy
\]
is zero mean and thus may have the properties of a noise.

We divide the section into two subsections. The first one 5.1 illustrates how
a single perturbation of non-zero-average generates a zero-mean electric field
$E^{\prime}(x,t)$, and we compute its covariance operator, giving also an
example of a Gaussian random field with the same covariance operator. The
second one 5.2 shows how such objects may be used as building blocks of a
white-noise-in-time Gaussian process, that will be our noise $W(x,t)$ in
equation \eqref{Vlasov stochastic}.

\subsection{Preliminaries on random fields\label{subsect random fields}}

Assume we are on the torus $\mathbb{T}^{3}$. The Coulomb Green kernel, up to
constants, is $G\left(  x\right)  =\frac{1}{\left\vert x\right\vert }+h\left(
x\right)  $ for a smooth function $h$, and the associated electric field
kernel is $\nabla G\left(  x\right)  = - \frac{x}{\left\vert x\right\vert ^{3}%
}+\nabla h\left(  x\right)  $. One has the following properties, beside the
fact that the singularity is integrable:%
\begin{align*}
&  \nabla G\left(  -x\right)   =-\nabla G\left(  x\right) \quad \mbox{and}\quad
 \nabla G\text{ periodic.}%
\end{align*}
The Fourier transforms are%
\[
\widehat{G}\left(  k\right)  =\frac{1}{\left\vert k\right\vert ^{2}}%
,\qquad\widehat{\nabla G}\left(  k\right)  =\frac{ik}{\left\vert k\right\vert
^{2}},\qquad k\in\mathbb{Z}_{0}^{3},%
\]
where $\mathbb{Z}_{0}^{3}$ are the non-zero elements of $\mathbb{Z}^{3}$.

Consider a perturbation $\rho^{\prime}\left(  x\right)  $ (at some time $t$)
of the space-density $\rho\left(  x,t\right)  $ of the form%
\begin{align*}
\rho^{\prime}\left(  x\right)   &  =r\theta_{\ell}\left(  x-x_{0}\right),
\end{align*}
where $r>0$, $x_0\in \T^3$, and $\theta_{\ell}\left(  x\right) =\ell^{-3}\theta\left(
\ell^{-1}x\right)$ for some square integrable periodic function $\theta$. It is a paradigmatic
(idealized) choice for modeling a blob, a concentration of density, possibly
surrounded by an annulus of negative values (a Mexican hat form of $\theta$);
a blob localized around $x_{0}$, with size (space-scale) $\ell$ and intensity
$r$. We may randomize all parameters, $r,\ell,x_{0}$, and consider a random
blob of the form%
\[
\rho^{\prime}\left(  x\right)  =R\,\theta_{L}\left(  x-X\right),
\]
where $R,L$ and $X$ are independent random variables, $X$ is uniform in $\mathbb{T}^{3}%
$, $L$ is positive with density $f$, $R$ is real valued with square average
$\mathbb{E}\left[  R^{2}\right]  =\sigma^{2}$.

Assume, in addition to $\theta$ being square integrable and $f$ a density,
that
\[
\sum_{k\in\mathbb{Z}_{0}^{3}}\frac1{\left\vert k\right\vert ^{2}} \int_{0}^{\infty} \Big\vert \widehat{\theta
_{\ell}^{T}}\left(  k\right)  \Big\vert^{2}f\left(  \ell\right)  d\ell
<\infty.
\]
This condition will be trivially satisfied by our special choice of $f$
(compact support disjoined from zero) in the scaling limit, which is our main
concern in the paper.

Define the random vector field%
\[
E\left(  x\right)  =R\nabla\left(  G\ast\theta_{L}\right)  \left(  x-X\right)
.
\]
As it is clear from the computations below, under the assumption just made we
have%
\[
\mathbb{E}\big[  \left\langle E,\phi\right\rangle ^{2}\big]  <\infty
\]
for every $\phi\in L^{2}\left(  \mathbb{T}^{3},\mathbb{R}^{3}\right)  $, hence
it is well defined at least as a generalized random field.

\begin{lemma}
For every $\phi\in L^{2}\left(  \mathbb{T}^{3},\mathbb{R}^{3}\right)  $,%
\[
\mathbb{E}\left[  \left\langle E,\phi\right\rangle \right]  =0.
\]

\end{lemma}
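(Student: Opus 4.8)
The plan is to compute the mean by conditioning on the amplitude $R$ and the scale $L$, and then exploiting the uniform law of the blob center $X$ together with the gradient structure of the field. Writing $\psi_\ell := \nabla\left(G\ast\theta_\ell\right)$, which is a genuine element of $L^{2}\left(\mathbb{T}^{3},\mathbb{R}^{3}\right)$ since $\widehat{\psi_\ell}\left(k\right)=ik\,\widehat{\theta_\ell}\left(k\right)/\left\vert k\right\vert^{2}$ is square summable, the pairing reads
\[
\left\langle E,\phi\right\rangle = R\int_{\mathbb{T}^{3}}\psi_L\left(x-X\right)\cdot\phi\left(x\right)\,dx .
\]
The remark preceding the statement guarantees $\left\langle E,\phi\right\rangle\in L^{2}\left(\mathbb{P}\right)\subset L^{1}\left(\mathbb{P}\right)$, so the expectation is well defined and Fubini--Tonelli may be applied freely below.

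Next I would take the expectation and integrate out $X$ first. Since $X$ is uniform on $\mathbb{T}^{3}$ (of unit volume) and independent of $\left(R,L\right)$, the conditional expectation given $\left(R,L\right)$ is
\[
\mathbb{E}\big[\left\langle E,\phi\right\rangle \,\big|\, R,L\big] = R\int_{\mathbb{T}^{3}}\Big(\int_{\mathbb{T}^{3}}\psi_L\left(x-x_{0}\right)\,dx_{0}\Big)\cdot\phi\left(x\right)\,dx .
\]
The inner integral is translation invariant on the torus, hence equals $\int_{\mathbb{T}^{3}}\psi_L\left(y\right)\,dy$, independent of $x$.

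The decisive observation is that this spatial integral vanishes: $\psi_L$ is the gradient of a periodic function, so each of its components integrates to zero over $\mathbb{T}^{3}$. Equivalently, in Fourier the zero mode carries a factor $ik$ that vanishes at $k=0$, consistent with the convention $\widehat{\nabla G}\left(k\right)=ik/\left\vert k\right\vert^{2}$ being defined only on $\mathbb{Z}_{0}^{3}$. Consequently the conditional expectation above is identically zero, and taking a further expectation over $\left(R,L\right)$ yields $\mathbb{E}\left[\left\langle E,\phi\right\rangle\right]=0$.

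Finally I would emphasize the main conceptual point and the (mild) obstacle. The zero-mean property does \emph{not} rely on $R$ having zero mean---only $\mathbb{E}\left[R^{2}\right]=\sigma^{2}$ is assumed---but is produced entirely by the homogeneous (uniform) placement of the blob center $X$ combined with the fact that $E$ is a gradient field; this is exactly the mechanism anticipated in the discussion above, namely that a space-homogeneous density perturbation generates a zero-mean electric field. The only step requiring care is the justification of the interchange of $\mathbb{E}$ with the spatial integral, and this is licensed by the square-integrability established in the preceding remark, so no genuine difficulty arises.
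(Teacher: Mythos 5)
Your proof is correct and follows essentially the same route as the paper: integrate out the uniformly distributed center $X$ and use that a gradient field on $\mathbb{T}^{3}$ has zero spatial mean (the paper reduces this to $\int_{\mathbb{T}^{3}}\nabla G\left(-y\right)dy=0$ after factoring out $\mathbb{E}\left[R\right]\left\Vert \theta\right\Vert _{L^{1}}$, while you apply it directly to $\nabla\left(G\ast\theta_{L}\right)$). Your added attention to integrability and Fubini is a harmless refinement of the same argument.
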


\begin{proof}%
By definition,
\begin{align*}
\mathbb{E}\left[  \left\langle E,\phi\right\rangle \right]   &  =\int%
_{\mathbb{T}^{3}}\int_{\mathbb{T}^{3}}\phi\left(  x\right)  \cdot\nabla
G\left(  x-y\right)  \mathbb{E}\left[  R\left(  \int_{\mathbb{T}^{3}}%
\theta_{L}\left(  y-z\right)  dz\right)  \right]  dydx\\
&  =\left\Vert \theta\right\Vert _{L^{1}}\mathbb{E}\left[  R\right]
\int_{\mathbb{T}^{3}}\int_{\mathbb{T}^{3}}\phi\left(  x\right)  \cdot\nabla
G\left(  x-y\right)  dydx\\
&= \left\Vert \theta\right\Vert _{L^{1}}\mathbb{E}\left[  R\right]\int_{\mathbb{T}^{3}}\phi\left(  x\right)  \cdot
\int_{\mathbb{T}^{3}}\nabla G\left(  -y\right)  dydx=0.
\end{align*}

\end{proof}

\begin{lemma}
For every $\phi,\psi\in L^{2}\left(  \mathbb{T}^{3},\mathbb{R}^{3}\right)  $,
\begin{equation}\label{covariance-electric-field}
\mathbb{E}\left[  \left\langle E,\phi\right\rangle \left\langle E,\psi
\right\rangle \right]  =\sigma^{2}\sum_{k\in\mathbb{Z}_{0}^{3}}\frac{\chi
^{2}\left(  k\right)  }{\left\vert k\right\vert ^{2}}\left\langle
\frac{k\otimes k}{\left\vert k\right\vert ^{2}}\widehat{\phi}\left(  k\right)
,\overline{\widehat{\psi}\left(  k\right)  }\right\rangle
\end{equation}
with
\[
\chi\left(  k\right)  =\sqrt{\int_{0}^{\infty}\left\vert \widehat{\theta
_{\ell}^{T}}\left(  k\right)  \right\vert ^{2}f\left(  \ell\right)  d\ell},%
\]
where $\theta_{\ell}^{T}\left(  x\right)  =\theta_{\ell}\left(  -x\right)  $.
Namely, the covariance function is
\[
Q\left(  x-y\right)  =\sigma^{2}\sum_{k\in\mathbb{Z}_{0}^{3}}\frac{\chi
^{2}\left(  k\right)  }{\left\vert k\right\vert ^{2}}\frac{k\otimes
k}{\left\vert k\right\vert ^{2}}e^{2\pi ik\cdot\left(  x-y\right)  }.
\]

\end{lemma}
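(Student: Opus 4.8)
The plan is to evaluate the second moment directly from the representation $E(x)=R\,\nabla(G\ast\theta_{L})(x-X)$, using the independence of $R$, $L$, $X$. Writing $g_{\ell}:=\nabla(G\ast\theta_{\ell})$, both $\langle E,\phi\rangle$ and $\langle E,\psi\rangle$ carry the same random amplitude $R$, which is independent of $(L,X)$ and satisfies $\mathbb{E}[R^{2}]=\sigma^{2}$. Hence I would first factor out $\sigma^{2}$ and reduce the problem to $\mathbb{E}_{L,X}\big[\langle g_{L}(\cdot-X),\phi\rangle\,\langle g_{L}(\cdot-X),\psi\rangle\big]$, then condition on $L=\ell$ and perform the average over the uniform shift $X$ first.

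Next I would pass to Fourier series on $\T^{3}$. By the convolution theorem and $\widehat{\nabla G}(k)=ik/|k|^{2}$, the shifted field has coefficients $\widehat{g_{\ell}(\cdot-X)}(k)=\frac{ik}{|k|^{2}}\,\widehat{\theta_{\ell}}(k)\,e^{-2\pi ik\cdot X}$, so each pairing becomes a single Fourier sum and the product of the two pairings is a double sum over $k,k'$ weighted by $e^{-2\pi i(k+k')\cdot X}$. Averaging over $X$ uniform on $\T^{3}$ and using the orthogonality relation $\mathbb{E}_{X}\big[e^{-2\pi i(k+k')\cdot X}\big]=\delta_{k',-k}$ collapses the double sum to a single sum over $k\in\Z_{0}^{3}$. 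In that sum the two gradient factors combine into $\frac{(ik)\otimes(-ik)}{|k|^{4}}=\frac{k\otimes k}{|k|^{4}}$, while the two amplitudes combine into $\widehat{\theta_{\ell}}(k)\widehat{\theta_{\ell}}(-k)=|\widehat{\theta_{\ell}}(k)|^{2}$, using that $\theta$ is real.

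Averaging the resulting $\ell$-dependent expression against the density $f$ then produces exactly $\chi^{2}(k)=\int_{0}^{\infty}|\widehat{\theta_{\ell}^{T}}(k)|^{2}f(\ell)\,d\ell$, after noting $|\widehat{\theta_{\ell}^{T}}(k)|^{2}=|\widehat{\theta_{\ell}}(k)|^{2}$ since $\widehat{\theta_{\ell}^{T}}(k)=\widehat{\theta_{\ell}}(-k)$. At this point one has $\sigma^{2}\sum_{k\in\Z_{0}^{3}}\frac{\chi^{2}(k)}{|k|^{4}}\big(k\cdot\widehat\phi(-k)\big)\big(k\cdot\widehat\psi(k)\big)$; relabelling $k\mapsto -k$ in this symmetric sum and using $\widehat\phi(-k)=\overline{\widehat\phi(k)}$ for real $\phi$ rewrites the scalar as $\big\langle\frac{k\otimes k}{|k|^{2}}\widehat\phi(k),\overline{\widehat\psi(k)}\big\rangle$, which is precisely \eqref{covariance-electric-field}. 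The covariance function $Q(x-y)$ is then read off by inverting the Fourier representation, i.e. it is the matrix-valued series whose $k$-th coefficient is $\sigma^{2}\frac{\chi^{2}(k)}{|k|^{2}}\frac{k\otimes k}{|k|^{2}}$.

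I expect the only genuine difficulty to be careful bookkeeping of the complex conjugates and of the reflection $k\mapsto -k$, so that the final scalar matches the pairing with $\overline{\widehat\psi(k)}$ exactly; the $\theta_{\ell}^{T}$ entering $\chi$ is a harmless artifact of this. The other point requiring justification is the interchange of the expectations with the infinite Fourier summation (Fubini/Tonelli and dominated convergence), which is licensed precisely by the standing summability hypothesis $\sum_{k}|k|^{-2}\int_{0}^{\infty}|\widehat{\theta_{\ell}^{T}}(k)|^{2}f(\ell)\,d\ell<\infty$ assumed before the lemma; this same bound guarantees $\mathbb{E}[\langle E,\phi\rangle^{2}]<\infty$, so that all the manipulations are meaningful.
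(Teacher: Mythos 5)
Your proposal is correct and takes essentially the same route as the paper's proof: factor out $\mathbb{E}[R^{2}]=\sigma^{2}$ by independence, exploit the uniform distribution of $X$ together with Fourier orthogonality to collapse everything to a single sum over $k\in\mathbb{Z}_{0}^{3}$, and finally average over $L$ against the density $f$ to produce $\chi^{2}(k)$. The only cosmetic difference is that the paper packages the $X$-average as the $L^{2}(\mathbb{T}^{3})$ inner product of the convolutions $\theta_{L}^{T}\ast(\nabla G)^{T}\ast\phi$ and $\theta_{L}^{T}\ast(\nabla G)^{T}\ast\psi$ and then invokes Parseval, which is exactly your explicit double-sum collapse via $\mathbb{E}_{X}\big[e^{-2\pi i(k+k')\cdot X}\big]=\delta_{k',-k}$.
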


\begin{proof}%
We have
\begin{align*}
\left\langle E,\phi\right\rangle  &  =R\int_{\mathbb{T}^{3}}\int%
_{\mathbb{T}^{3}}\phi\left(  x\right)  \cdot\nabla G\left(  x-y\right)
\theta_{L}\left(  y-X\right)  dydx\\
&  =R\big(  \theta_{L}^{T}\ast\left(  \nabla G\right)  ^{T}\ast\phi\big)
\left(  X\right),
\end{align*}
where $\left(  \nabla G\right)  ^{T}\left(  x\right)  =\nabla G\left(
-x\right)  $. Hence%
\begin{align*}
\mathbb{E}\left[  \left\langle E,\phi\right\rangle \left\langle E,\psi
\right\rangle \right]
&  =\sigma^{2}\mathbb{E}\int_{\mathbb{T}^{3}} \big(
\theta_{L}^{T}\ast\left(  \nabla G\right)  ^{T}\ast\phi\big)  \left(
x\right)  \big(  \theta_{L}^{T}\ast\left(  \nabla G\right)  ^{T}\ast
\psi\big)  \left(  x\right)  dx\\
&  =\sigma^{2}\sum_{k\in\mathbb{Z}_{0}^{3}}\mathbb{E}\left[  \left\vert
\widehat{\theta_{L}^{T}}\left(  k\right)  \right\vert^{2} \right]  \frac
{1}{\left\vert k\right\vert ^{2}}\left\langle \frac{k\otimes k}{\left\vert
k\right\vert ^{2}}\widehat{\phi}\left(  k\right)  ,\overline{\widehat{\psi
}\left(  k\right)  }\right\rangle .
\end{align*}
Using the density function for $L$, we obtain the desired result.
\end{proof}

Consider the space $L_{0}^{2}\left(  \mathbb{T}^{3},\mathbb{R}^{3}\right)  $
(mean zero periodic $L^{2}$-fields), defined on the complex field. A
complete orthonormal system is $\left(  e^{2\pi ik\cdot x}\right)
_{k\in\mathbb{Z}_{0}^{3}}$.

\begin{corollary}
The centered random vector field $E$ (random element of $L_{0}^{2}\left(
\mathbb{T}^{3},\mathbb{R}^{3}\right)  $) has the same covariance function as the
Gaussian centered random vector field%
\[
F\left(  x\right)  =\sigma\sum_{k\in\mathbb{Z}_{0}^{3}}\frac{\chi\left(
k\right)  }{\left\vert k\right\vert }\frac{k}{\left\vert k\right\vert }e^{2\pi
ik\cdot x}Z_{k},%
\]
where $\{Z_{k}\}_k$ are i.i.d. standard normal random variables.
\end{corollary}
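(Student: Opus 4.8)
The plan is to exploit that the claim concerns only second moments: $E$ is the (non‑Gaussian) field generated by a single random blob, while $F$ is a genuine Gaussian field, so ``same covariance function'' means only that their two‑point correlations coincide with the $Q$ computed in the previous lemma. Since $F$ is a linear combination of the independent centered Gaussians $Z_k$ with deterministic coefficients, it is automatically centered, $\E[\langle F,\phi\rangle]=0$, matching $\E[\langle E,\phi\rangle]=0$; thus it remains only to match the covariance.

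First I would check that $F$ is well defined as an element of $L^2_0(\T^3,\R^3)$. By Parseval on the orthonormal system $(e^{2\pi ik\cdot x})_{k\in\Z_0^3}$ and $|k/|k||=1$, one finds $\E\|F\|_{L^2}^2=\sigma^2\sum_{k\in\Z_0^3}\frac{\chi^2(k)}{|k|^2}$, which is finite by the standing summability assumption on $\chi$; hence the defining series converges in $L^2(\Omega\times\T^3)$ and the interchanges of summation and expectation below are justified by dominated convergence.

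The core computation is the two‑point covariance, which I would carry out directly from the series. Expanding $F(x)\otimes\overline{F(y)}$ produces a double sum over $k,l$ carrying the factor $\E[Z_k\overline{Z_l}]$; since the $Z_k$ are real i.i.d. standard normal this equals $\delta_{kl}$, collapsing the double sum to a single one. Using that the coefficients $\chi(k)$ and $k/|k|$ are real and that $\overline{e^{2\pi ik\cdot y}}=e^{-2\pi ik\cdot y}$, this yields
\[
\E\big[F(x)\otimes\overline{F(y)}\big]=\sigma^2\sum_{k\in\Z_0^3}\frac{\chi^2(k)}{|k|^2}\,\frac{k\otimes k}{|k|^2}\,e^{2\pi ik\cdot(x-y)},
\]
which is exactly the function $Q(x-y)$ of the previous lemma. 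Equivalently, testing against $\phi,\psi\in L^2(\T^3,\R^3)$ and using $\int_{\T^3}e^{2\pi ik\cdot x}\phi(x)\,dx=\overline{\hat\phi(k)}$ for real $\phi$, together with the change of index $k\mapsto-k$ (under which $\chi$ is even and $k/|k|$ odd, so the two sign changes cancel), one recovers $\E[\langle F,\phi\rangle\overline{\langle F,\psi\rangle}]=\sigma^2\sum_k\frac{\chi^2(k)}{|k|^2}\langle\frac{k\otimes k}{|k|^2}\hat\phi(k),\overline{\hat\psi(k)}\rangle$, identical to the bilinear form \eqref{covariance-electric-field}.

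The one point demanding care --- and the only genuine obstacle --- is the complex structure. The naive object $\E[F(x)\otimes F(y)]$ carries a factor $e^{2\pi ik\cdot(x+y)}$ and is \emph{not} translation invariant, so it is the Hermitian two‑point function $\E[F(x)\otimes\overline{F(y)}]$ that must be matched; correspondingly, at the level of test functions one must use the Hermitian pairing $\E[\langle F,\phi\rangle\overline{\langle F,\psi\rangle}]$. For the real field $E$ and real test functions $\langle E,\psi\rangle$ is real, so the conjugation in the previous lemma was invisible and the two conventions agree; tracking this conjugation carefully through the Fourier computation is precisely what makes the unconjugated factor $\hat\phi(k)$ pair with $\overline{\hat\psi(k)}$ exactly as in the target expression.
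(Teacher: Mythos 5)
Your proposal is correct and follows essentially the same route as the paper: both arguments hinge on $\mathbb{E}[Z_kZ_l]=\delta_{kl}$ collapsing the double sum, after which the resulting single sum is identified with the expression \eqref{covariance-electric-field} from the preceding lemma (the paper does this directly at the level of the Hermitian pairing $\mathbb{E}\big(\langle F,\phi\rangle\overline{\langle F,\psi\rangle}\big)$, while you additionally record the pointwise kernel $\mathbb{E}\big[F(x)\otimes\overline{F(y)}\big]=Q(x-y)$). Your extra observations --- the $L^2$ well-definedness of the series and the warning that the unconjugated pairing would not be translation invariant --- are sound refinements of details the paper leaves implicit, not a different method.
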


\begin{proof}%
One has
\begin{align*}
\mathbb{E}\big( \langle F,\phi \rangle \overline{\left\langle
F,\psi\right\rangle }\big)
&  =\sigma^{2}\sum_{k\in\mathbb{Z}_{0}^{3}%
}\frac{\chi^{2}\left(  k\right)  }{\left\vert k\right\vert ^{2}} \frac
{1}{\left\vert k\right\vert ^{2}} \langle ke^{2\pi ik\cdot x}%
,\phi \rangle \overline{\left\langle ke^{2\pi ik\cdot x},\psi
\right\rangle }\\
&  =\sigma^{2}\sum_{k\in\mathbb{Z}_{0}^{3}}\frac{\chi^{2}\left(  k\right)
}{\left\vert k\right\vert ^{2}}\frac{1}{\left\vert k\right\vert ^{2}}%
k\cdot\widehat{\phi}\left(  k\right)  k\cdot\overline{\widehat{\psi}\left(
k\right)  }\\
&  =\sigma^{2}\sum_{k\in\mathbb{Z}_{0}^{3}}\frac{\chi^{2}\left(  k\right)
}{\left\vert k\right\vert ^{2}}\left\langle \frac{k\otimes k}{\left\vert
k\right\vert ^{2}}\widehat{\phi}\left(  k\right)  ,\overline{\widehat{\psi
}\left(  k\right)  }\right\rangle,
\end{align*}
which is the same as the right-hand side of \eqref{covariance-electric-field}.
\end{proof}

Let us go back to $L_{0}^{2}\left(  \mathbb{T}^{3},\mathbb{R}^{3}\right)  $
considered on the field of real numbers. Split $\mathbb{Z}_{0}^{3}$ in two
parts, called $\mathbb{Z}_{+}^{3},\mathbb{Z}_{-}^{3}$, as in Example \ref{exa-noises}. Define%
\begin{align*}
F_{\operatorname{real}}\left(  x\right)   &  =\sqrt{2\sigma^{2}}\sum
_{k\in\mathbb{Z}_{+}^{3}}\frac{\chi\left(  k\right)  }{\left\vert k\right\vert
}\frac{k}{\left\vert k\right\vert }\cos\left(  2\pi k\cdot x\right)  Z_{k}\\
&  +\sqrt{2\sigma^{2}}\sum_{k\in\mathbb{Z}_{-}^{3}}\frac{\chi\left(  k\right)
}{\left\vert k\right\vert }\frac{k}{\left\vert k\right\vert }\sin\left(  2\pi
k\cdot x\right)  Z_{k}%
\end{align*}
where, again, $\{Z_{k}\}_k$ are i.i.d. standard normal random variables. We have%
\begin{align*}
 \mathbb{E}\left[  F\left(  x\right) F\left(  y\right)  \right]
&  =2\sigma^{2}\sum_{k\in\mathbb{Z}_{+}^{3}}\frac{\chi^{2}\left(  k\right)
}{\left\vert k\right\vert ^{2}}\frac{k\otimes k}{\left\vert k\right\vert ^{2}%
}\cos\left(  2\pi k\cdot x\right)  \cos\left(  2\pi k\cdot y\right)  \\
&  +2\sigma^{2}\sum_{k\in\mathbb{Z}_{-}^{3}}\frac{\chi^{2}\left(  k\right)
}{\left\vert k\right\vert ^{2}}\frac{k\otimes k}{\left\vert k\right\vert ^{2}%
}\sin\left(  2\pi k\cdot x\right)  \sin\left(  2\pi k\cdot y\right) \\
&=2\sigma^{2}\sum_{k\in\mathbb{Z}_{+}^{3}}\frac{\chi^{2}\left(  k\right)
}{\left\vert k\right\vert ^{2}}\frac{k\otimes k}{\left\vert k\right\vert ^{2}%
}\cos\left(  2\pi k\cdot\left(  x-y\right)  \right),
\end{align*}%
hence%
\begin{align*}
\mathbb{E}\left[  F\left(  x\right) F\left(  y\right)  \right]   &
=2\sigma^{2}\sum_{k\in\mathbb{Z}_{+}^{3}}\frac{\chi^{2}\left(  k\right)
}{\left\vert k\right\vert ^{2}}\frac{k\otimes k}{\left\vert k\right\vert ^{2}%
}\left(  \frac{e^{2\pi ik\cdot\left(  x-y\right)  }+e^{-2\pi ik\cdot\left(
x-y\right)  }}{2}\right)  \\
&  =\sigma^{2}\sum_{k\in\mathbb{Z}_{0}^{3}}\frac{\chi^{2}\left(  k\right)
}{\left\vert k\right\vert ^{2}}\frac{k\otimes k}{\left\vert k\right\vert ^{2}%
}e^{2\pi ik\cdot\left(  x-y\right)  }%
\end{align*}
which gives the same result as the complex case.

\subsection{The noise and Donsker type
argument\label{subsect noise and Donsker}}

Based on the previous subsection and the Donsker type argument described
below, we shall choose in equation (\ref{Vlasov stochastic}) the following
space-dependent Brownian motion%
\begin{align*}
W\left(  x,t\right)   &  =\sqrt{2\tau\sigma^{2}}\sum_{k\in\mathbb{Z}_{+}^{3}%
}\frac{\chi\left(  k\right)  }{\left\vert k\right\vert }\frac{k}{\left\vert
k\right\vert }\cos\left(  2\pi k\cdot x\right)  W_{t}^{k}\\
&  +\sqrt{2\tau\sigma^{2}}\sum_{k\in\mathbb{Z}_{-}^{3}}\frac{\chi\left(
k\right)  }{\left\vert k\right\vert }\frac{k}{\left\vert k\right\vert }%
\sin\left(  2\pi k\cdot x\right)  W_{t}^{k},%
\end{align*}
where $\tau$ is described
below and $\left\{  W_{t}^{k}\right\} _{k\in\mathbb{Z}_{0}^{3}}$ is a family of
independent standard Brownian motions. Its covariance function is%
\[
Q\left(  x-y\right)  =2\tau\sigma^{2}\sum_{k\in\mathbb{Z}_{+}^{3}}\frac
{\chi^{2}\left(  k\right)  }{\left\vert k\right\vert ^{2}}\frac{k\otimes
k}{\left\vert k\right\vert ^{2}}\cos\left(  2\pi k\cdot\left(  x-y\right)
\right)  .
\]

Let us motivate this choice of noise. We start from a space density
$\rho\left(  x,t\right)  $, and we assume that there is a slowly varying
background density $\overline{\rho}\left(  x,t\right)  $, a sequence of increasing times $\left(
t_{n}\right)  _{n=0,1,\ldots}$ and perturbations $\left(  \rho_{n}\left(
x\right)  \right)  _{n=0,1,\ldots}$ such that $\rho\left(  x,t\right)  $ in the
time interval $\left[  t_{n},t_{n+1}\right)  $ is equal to $\overline{\rho
}\left(  x,t\right)  +\rho_{n}\left(  x\right)  $; namely%
\[
\rho\left(  x,t\right)  =\overline{\rho}\left(  x,t\right)  +\sum_{n}\rho
_{n}\left(  x\right)  1_{\left[  t_{n},t_{n+1}\right)  }\left(  t\right)  .
\]

Suppose for simplicity that there is $\tau>0$ such that $t_{n+1}-t_{n}=\tau$
(but the conclusion is similar if, for instance, we assume that the intertimes
$t_{n+1}-t_{n}$ are independent and exponentially distributed random variables
with mean $\tau$). We assume that $\rho_{n}\left(  x\right)  $ are independent
random fields of the form%
\[
\rho_{n}\left(  x\right)  =R_{n}\theta_{L_{n}}\left(  x-X_{n}\right),
\]
where all the random variables $\left(  R_{n},L_{n},X_{n}\right)  _{n=0,1,\ldots}$ are
independent, $X_{n}$ are uniformly distributed in $\mathbb{T}^{3}$, $R_{n}$
and $L_{n}$ are positive with probability density functions $g\left(
r\right)  $ and $f\left(  \ell\right)  $ respectively, $\theta$ is an
integrable function (not necessarily positive), $\theta_{\ell}\left(
x\right)  =\ell^{-3}\theta\left(  \ell^{-1}x\right)  $. The intuition is that
the perturbations have the form $R\theta_{\ell}\left(  x-x_{0}\right)  $,
where the size $R$, the space-scale $\ell$ and the center $x_{0}$ are randomly distributed.

The corresponding electric potential $\phi\left(  x,t\right)  $ is given by%
\begin{align*}
\phi\left(  x,t\right)  &=\overline{\phi}\left(  x,t\right)  +\phi^{\prime
}\left(  x,t\right) \quad \mbox{with} \\
\overline{\phi}\left(  \cdot,t\right)   &  =\left(  -\Delta\right)  ^{-1}%
\rho\left(  \cdot,t\right), \\
\phi^{\prime}\left(  \cdot,t\right)   &  =\sum_{n}\left(  -\Delta\right)
^{-1}\rho_{n}1_{\left[  t_{n},t_{n+1}\right)  }\left(  t\right)  .
\end{align*}
Namely, $\phi\left(  x,t\right)  $ is the sum of a background slowly varying
electric potential $\left(  -\Delta\right)  ^{-1}\rho\left(  \cdot,t\right)  $
plus a fast varying perturbation. For the link with experiments and physical
arguments, it may be possible that the measured fluctuations are for the
electric potential, instead of the space density.

The corresponding electric field $E\left(  x,t\right)  $ is composed of a
background slowly varying electric field
\[
\overline{E}\left(  \cdot,t\right)  =\nabla\left(  -\Delta\right)  ^{-1}%
\rho\left(  \cdot,t\right)
\]
plus a fast varying perturbation
\[
E^{\prime}\left(  \cdot,t\right)  =\sum_{n}\nabla\left(  -\Delta\right)
^{-1}\rho_{n}1_{\left[  t_{n},t_{n+1}\right)  }\left(  t\right)  .
\]
In our preliminary modeling phase (still only heuristic), we replace
$E^{\prime}\left(  \cdot,t\right)  $ by the time derivative of a
space-dependent Brownian motion:%
\[
E^{\prime}\left(  \cdot,t\right)  \longrightarrow \sqrt{\tau
}\partial_{t}W\left(  \cdot,t\right)
\]
multiplied by a factor $\sqrt{\tau}$ which gives us the correct unit of
measure and keeps a trace of the correlation time of the original random
fluctuations. The heuristic procedure for this replacement is that, given
$T>0$ of the form $T=N\tau$, dropping $x$ for notational simplicity,
\[\aligned
\frac{1}{\sqrt{\tau}}\int_{0}^{T}E^{\prime}\left(  t\right)  dt
&=\frac{1}%
{\sqrt{\tau}}\sum_{t_{n+1}\leq T}\nabla\left(  -\Delta\right)  ^{-1}\rho
_{n}\left(  t_{n+1}-t_{n}\right)  +\frac{1}{\sqrt{\tau}}\nabla\left(
-\Delta\right)  ^{-1}\rho_{n+1}\left(  T-t_{n+1}\right) \\
&=\sqrt{\frac TN}\sum_{n\leq N-1}\nabla\left(  -\Delta\right)  ^{-1}\rho_{n}%
\endaligned \]
which converges (by the Central Limit Theorem) to a Gaussian random variable for given
$T$; and (by Donsker theorem) to a Brownian motion $W_{T}$ when $T$ varies,
with covariance operator $\mathbb{Q}$ in $L^{2}\left(  \mathbb{T}%
^{3},\mathbb{R}^{3}\right)  $ given by
\[
\left\langle \mathbb{Q}v,w\right\rangle _{L^{2}\left(  \mathbb{T}%
^{3},\mathbb{R}^{3}\right)  }
=\mathbb{E}\left[  \big\langle \nabla\left(
-\Delta\right)  ^{-1}\rho^{\prime},v \big\rangle _{L^{2}\left(
\mathbb{T}^{3},\mathbb{R}^{3}\right)  } \big\langle \nabla\left(
-\Delta\right)  ^{-1}\rho^{\prime}, w\big\rangle _{L^{2}\left(
\mathbb{T}^{3},\mathbb{R}^{3}\right)  }\right],
\]
where $\rho^{\prime}$ is distributed as $\rho_{n}$. Essential for the result
is the property%
\[
\mathbb{E}\big[  \nabla\left(  -\Delta\right)  ^{-1}\rho^{\prime}\big]  =0,
\]
otherwise we would get, in the scaling limit, an infinite contribution from the mean.

\subsection{Choice for the scaling limit}

We construct now an example of a sequence of Brownian motions which will be
used in the diffusive scaling limit. We prescribe a sequence of random variables $R_{N}$
with $\mathbb{E}\left[  R_{N}^{2}\right]  =\sigma_{N}^{2}$, and a sequence of
functions $f_{N}$ (densities of a sequence $L_{N}$) of the form%
\[
f_{N}\left(  \ell\right)  =\ell_{N}^{-1}1_{\left[  \ell_{N},2\ell_{N}\right]} \left(  \ell\right),
\]
with $\ell_N\to 0$ as $N\to \infty$. The intuition behind this choice is that we have blob-like fluctuations at
space-scale $\ell_{N}$, uniformly and densely distributed on $\mathbb{T}^{3}$,
with small intensity $\sigma_{N}^{2}$. The
noise has the form
\begin{align*}
W_{N}\left(  x,t\right)   &  =\sqrt{2\tau\sigma_{N}^{2}}\sum_{k\in
\mathbb{Z}_{+}^{3}}\frac{\chi_{N}\left(  k\right)  }{\left\vert k\right\vert
}\frac{k}{\left\vert k\right\vert }\cos\left(  2\pi k\cdot x\right)  W_{t}%
^{k}\\
&  +\sqrt{2\tau\sigma_{N}^{2}}\sum_{k\in\mathbb{Z}_{-}^{3}}\frac{\chi
_{N}\left(  k\right)  }{\left\vert k\right\vert }\frac{k}{\left\vert
k\right\vert }\sin\left(  2\pi k\cdot x\right)  W_{t}^{k}, %
\end{align*}
with
\[
\chi_{N}\left(  k\right)  =\sqrt{\ell_{N}^{-1}\int_{\ell_{N}}^{2\ell_{N}%
}\left\vert \widehat{\theta_{\ell}^{T}}\left(  k\right)  \right\vert ^{2}%
d\ell}%
\]
and covariance function%
\[
Q_{N}\left(  x-y\right)  =2\tau\sigma_{N}^{2}\sum_{k\in\mathbb{Z}_{+}^{3}%
}\frac{\chi_{N}^{2}\left(  k\right)  }{\left\vert k\right\vert ^{2}}%
\frac{k\otimes k}{\left\vert k\right\vert ^{2}}\cos\left(  2\pi k\cdot\left(
x-y\right)  \right)  .
\]
In view of the next computation, where $\chi_{N}\left(  k\right)  $ becomes
close to a constant, notice that for finite $N$ this expression is well
defined, just because $\theta$ is assumed to be square integrable.

In the sequel, we assume that $\theta$ is radially symmetric and has compact support on $\T^3$, and thus we can also regard $\theta$ as a function on $\R^3$ which vanishes outside $\big(-\frac12, \frac12\big)^3$.

\begin{lemma}\label{lem-conv-Fourier-coeff}
We have
\[
\lim_{N\rightarrow\infty}\chi_{N}\left(  k\right)  =\left\Vert \theta
\right\Vert _{L^{1}} %
\]
for every $k\in\mathbb{Z}_{0}^{3}$.
\end{lemma}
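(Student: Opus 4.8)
The plan is to reduce the torus Fourier coefficient $\widehat{\theta_\ell^T}(k)$ to the full–space Fourier transform of $\theta$ evaluated at the shrinking frequency $\ell k$, and then let $\ell\to 0$. First, since $\theta$ is radially symmetric one has $\theta_\ell^T=\theta_\ell$, so that $\widehat{\theta_\ell^T}(k)=\widehat{\theta_\ell}(k)$ and it suffices to track $\widehat{\theta_\ell}(k)$. Working with the convention $\widehat{g}(k)=\int_{\T^3}g(x)e^{-2\pi ik\cdot x}\,dx$ consistent with the orthonormal system $\{e^{2\pi ik\cdot x}\}$, the substitution $x=\ell y$ gives
$$\widehat{\theta_\ell}(k)=\int_{\T^3}\ell^{-3}\theta(\ell^{-1}x)\,e^{-2\pi ik\cdot x}\,dx=\int_{\ell^{-1}\T^3}\theta(y)\,e^{-2\pi i(\ell k)\cdot y}\,dy.$$

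The compact support hypothesis is the crucial ingredient in the next step: for $\ell<1$ the support of $\theta$, contained in $\big(-\tfrac12,\tfrac12\big)^3$, lies inside $\ell^{-1}\T^3=\big[-\tfrac1{2\ell},\tfrac1{2\ell}\big]^3$, so the domain of integration may be enlarged to all of $\R^3$ without changing the value. Hence $\widehat{\theta_\ell}(k)=\widehat{\theta}_{\R^3}(\ell k)$, the continuous Fourier transform of $\theta\in L^1(\R^3)$. Since $\theta\in L^1$, the map $\xi\mapsto\widehat{\theta}_{\R^3}(\xi)$ is continuous, and $\widehat{\theta}_{\R^3}(0)=\int_{\R^3}\theta(y)\,dy$, which equals $\|\theta\|_{L^1}$ for the nonnegative blob profile.

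It remains to handle the average over the window $[\ell_N,2\ell_N]$. Writing $\chi_N(k)^2=\ell_N^{-1}\int_{\ell_N}^{2\ell_N}|\widehat{\theta}_{\R^3}(\ell k)|^2\,d\ell$, I observe that for fixed $k$ and $\ell\in[\ell_N,2\ell_N]$ one has $|\ell k|\le 2\ell_N|k|\to 0$, so $\ell k\to 0$ uniformly over the window as $N\to\infty$. By continuity of $\widehat{\theta}_{\R^3}$ at the origin the integrand converges uniformly on $[\ell_N,2\ell_N]$ to $|\widehat{\theta}_{\R^3}(0)|^2=\|\theta\|_{L^1}^2$; since $\ell_N^{-1}\int_{\ell_N}^{2\ell_N}(\cdot)\,d\ell$ is simply the average of the integrand over an interval of length $\ell_N$, it converges to this same limit. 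Taking square roots yields $\chi_N(k)\to\|\theta\|_{L^1}$, as claimed.

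The only delicate point is the identity $\widehat{\theta_\ell}(k)=\widehat{\theta}_{\R^3}(\ell k)$, i.e. the passage from a discrete periodic coefficient to a continuous full–space transform; all the remaining ingredients are elementary facts about continuity of the Fourier transform and averaging over a vanishing window. That passage is legitimate precisely because $\theta$ is supported strictly inside the fundamental cell, so that for small $\ell$ no periodization (aliasing) corrections survive and the rescaled profile still fits in $\T^3$.
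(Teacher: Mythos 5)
Your proof is correct and follows essentially the same route as the paper's: rescale via $x=\ell y$, use the compact support of $\theta$ in $\big(-\tfrac12,\tfrac12\big)^3$ to identify $\widehat{\theta_\ell^T}(k)$ with the Fourier transform of $\theta$ at the frequency $\pm\ell k$ (the paper restricts the integral back to $\T^3$ where you extend to $\R^3$, an immaterial difference), and conclude by continuity of $\widehat\theta$ at the origin. Your explicit treatment of the average over $[\ell_N,2\ell_N]$ and your remark that $\widehat\theta(0)=\|\theta\|_{L^1}$ requires a nonnegative profile are minor elaborations of steps the paper treats as immediate.
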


\begin{proof}
By definition,
  \begin{equation}\label{eq-Fourier-coefficients}
  \aligned
  \widehat{\theta_{\ell}^{T}}\left(  k\right)
  &= \int_{\T^3} \theta_{\ell}^{T}(x) e^{-2\pi ik\cdot x}\, dx
  = \int_{\T^3} \ell^{-3} \theta(-\ell^{-1} x) e^{-2\pi ik\cdot x}\, dx \\
  &= \int_{\T^3_{\ell^{-1}}} \theta(y) e^{-2\pi i \ell k\cdot y}\, dy
  = \int_{\T^3} \theta(y) e^{-2\pi i \ell k\cdot y}\, dy
  = \widehat\theta(-\ell k),
  \endaligned
  \end{equation}
where $\T^3_{\ell^{-1}}= \big[-\frac1{2\ell}, \frac1{2\ell}\big]^3$ and we have used the fact that $\theta$ has compact support in $\big(-\frac12, \frac12\big)^3$.
The desired limit follows immediately from the fact that $\widehat\theta(-\ell k)\to \widehat\theta(0)= \|\theta \|_{L^1}$ as $\ell\to 0$.
\end{proof}

\begin{lemma}\label{lem-converg-covariance}
Assume
\[
\lim_{N\rightarrow\infty}2\sigma_{N}^{2}\sum_{k\in\mathbb{Z}_{+}^{3}}%
\frac{\chi_{N}^{2}\left(  k\right)  }{\left\vert k\right\vert ^{2}}=k_{T}^{2}>0,
\]
then ($I_3$ being the $3\times 3$ unit matrix)
\[
Q_{N}\left(  0\right)  \rightarrow \frac13 \tau k_{T}^{2} I_3.
\]
\end{lemma}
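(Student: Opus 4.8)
The plan is to evaluate $Q_N(0)$ directly from its series representation and then exploit the radial symmetry of the Fourier coefficients to collapse the matrix-valued sum into a scalar multiple of $I_3$. Setting $x=y$ in the expression for $Q_N(x-y)$, every cosine factor becomes $1$, so that
\[
Q_N(0) = 2\tau\sigma_N^2 \sum_{k\in\Z^3_+} \frac{\chi_N^2(k)}{|k|^2}\, \frac{k\otimes k}{|k|^2}.
\]
The first point I would record is that, since $\theta$ is radially symmetric, its Fourier transform $\widehat\theta$ is radially symmetric as well; combined with the identity $\widehat{\theta_\ell^T}(k)=\widehat\theta(-\ell k)$ from \eqref{eq-Fourier-coefficients}, this shows that $\chi_N(k)$ depends on $k$ only through $|k|$. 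Consequently the scalar weight $c_N(k):=\chi_N^2(k)/|k|^2$ is invariant under every sign change $k_i\mapsto -k_i$ and under every permutation of the coordinates of $k$.

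Next I would pass from the half-lattice $\Z^3_+$ to the full punctured lattice $\Z^3_0=\Z^3_+\cup\Z^3_-$. Since both $c_N(k)$ and the matrix $k\otimes k/|k|^2$ are even in $k$, and $\Z^3_+=-\Z^3_-$, one has
\[
\sum_{k\in\Z^3_+} c_N(k)\,\frac{k\otimes k}{|k|^2} = \frac12 \sum_{k\in\Z^3_0} c_N(k)\,\frac{k\otimes k}{|k|^2},
\]
so that the $(i,j)$ entry of $Q_N(0)$ equals $\tau\sigma_N^2\sum_{k\in\Z^3_0} c_N(k)\, k_i k_j/|k|^2$. For $i\ne j$ the sign change $k_i\mapsto -k_i$ fixes $c_N(k)$ but flips the sign of $k_ik_j$, so the sum over $\Z^3_0$ cancels term by term and the off-diagonal entries vanish. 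For the diagonal entries, the permutation invariance of $c_N$ forces the three sums $\sum_k c_N(k)\,k_i^2/|k|^2$ to coincide, and since $(k_1^2+k_2^2+k_3^2)/|k|^2=1$ each of them equals $\tfrac13\sum_{k\in\Z^3_0} c_N(k)$. Hence $Q_N(0)=\tfrac13\tau\sigma_N^2\big(\sum_{k\in\Z^3_0} c_N(k)\big) I_3$.

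Finally I would rewrite $\sum_{k\in\Z^3_0} c_N(k)=2\sum_{k\in\Z^3_+}\chi_N^2(k)/|k|^2$, again by evenness, which identifies the scalar prefactor as $\tfrac13\tau\big(2\sigma_N^2\sum_{k\in\Z^3_+}\chi_N^2(k)/|k|^2\big)$; the standing hypothesis then drives this quantity to $\tfrac13\tau k_T^2$, and the conclusion $Q_N(0)\to\tfrac13\tau k_T^2 I_3$ follows. I do not expect a genuine obstacle here, as the whole argument is a symmetrization; the only step requiring care is to make explicit that the radial symmetry of $\theta$ (assumed just before the lemma) is precisely what guarantees the lattice symmetries of $c_N$, since without it neither the off-diagonal cancellation nor the equidistribution among the diagonal entries would hold.
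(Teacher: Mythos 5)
Your proposal is correct and follows essentially the same route as the paper: both arguments use the radial symmetry of $\theta$ (hence of $\chi_N$) to pass from the half-lattice $\Z^3_+$ to $\Z^3_0$, kill the off-diagonal entries of $Q_N(0)$ by a coordinate sign flip, and identify the three equal diagonal entries with $\tfrac13$ of the scalar sum via $(k_1^2+k_2^2+k_3^2)/|k|^2=1$, after which the hypothesis gives the limit. The only cosmetic difference is that the paper cancels the off-diagonal terms by grouping the four points $(\pm k_1,\pm k_2,k_3)$, while you use a single sign change $k_i\mapsto -k_i$; these are the same symmetrization.
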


\begin{remark}
The series $\sum_{k\in\mathbb{Z}_{+}^{3}}\frac{\chi_{N}^{2}\left(  k\right)
}{\left\vert k\right\vert ^{2}}$ is well defined (finite numbers)\ and
diverges to infinity as $N\to \infty$. The lemma states that we have to take $\sigma_{N}^{2}$
going to zero as the reciprocal of this sequence, in order to have a finite
non zero limit of the diffusion term.
\end{remark}

\begin{proof}[Proof of Lemma \ref{lem-converg-covariance}]
Recall that we assume $\theta\in L^1(\R^3)$ to be radially symmetric, hence so is the function $\xi\mapsto \big|\widehat\theta(\xi)\big|$; by \eqref{eq-Fourier-coefficients} and the definition of $\chi_{N} \left(  k\right)$, we see that the latter is also a radial function in $k$. As a result,
  $$ Q_{N}\left(0\right)  =2\tau\sigma_{N}^{2}\sum_{k\in\mathbb{Z}_{+}^{3}%
  }\frac{\chi_{N}^{2}\left(  k\right)  }{\left\vert k\right\vert ^{2}}%
  \frac{k\otimes k}{\left\vert k\right\vert ^{2}}
  = \tau\sigma_{N}^{2}\sum_{k\in\mathbb{Z}_0^{3}%
  }\frac{\chi_{N}^{2}\left(  k\right)  }{\left\vert k\right\vert ^{2}}%
  \frac{k\otimes k}{\left\vert k\right\vert ^{2}}
  = \frac13 \tau\sigma_{N}^{2}\sum_{k\in\mathbb{Z}_0^{3}%
  }\frac{\chi_{N}^{2}\left(  k\right)  }{\left\vert k\right\vert ^{2}} I_3 .$$
The last equality is classical; we present the proof here for completeness. For any $i,j\in \{1,2,3\}$ with $i\neq j$, say $i=1, j=2$,
  $$Q_{N}^{1,2}\left(0\right)= \tau\sigma_{N}^{2}\sum_{k\in\mathbb{Z}_0^{3}%
  }\frac{\chi_{N}^{2}\left(  k\right)  }{\left\vert k\right\vert ^{2}}%
  \frac{k_1 k_2}{\left\vert k\right\vert ^{2}}; $$
computing the sum over the four points $(k_1, k_2, k_3)$, $(k_1, -k_2, k_3)$, $(-k_1, k_2, k_3)$ and $(-k_1, -k_2, k_3)$, we see that $Q_{N}^{1,2}\left(0\right)=0$; the same is true for other values of $i\neq j$. Next, by symmetry, one has
  $$Q_{N}^{1,1}\left(0\right)=Q_{N}^{2,2}\left(0\right)=Q_{N}^{3,3}\left(0\right)= \frac13 \tau\sigma_{N}^{2}\sum_{k\in\mathbb{Z}_0^{3}%
  }\frac{\chi_{N}^{2}\left(  k\right)  }{\left\vert k\right\vert ^{2}} \frac{k_1^2+k_2^2+k_3^2}{|k|^2}
  = \frac13 \tau\sigma_{N}^{2}\sum_{k\in\mathbb{Z}_0^{3}%
  }\frac{\chi_{N}^{2}\left(  k\right)  }{\left\vert k\right\vert ^{2}}.$$
This completes the proof.
\end{proof}

\begin{lemma}
Under the assumption in Lemma \ref{lem-converg-covariance}, the covariance function $Q_N$ satisfies the conditions in \eqref{condition-covariance}.
\end{lemma}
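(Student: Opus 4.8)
The plan is to verify the two requirements of \eqref{condition-covariance} in turn, following the template of Example \ref{exa-noises}. First I would settle the normalization $Q_N(0)=2\kappa I_3$. The computation carried out in the proof of Lemma \ref{lem-converg-covariance} already shows that, for every fixed $N$, the matrix $Q_N(0)$ equals $2\kappa_N I_3$ with the scalar $2\kappa_N:=\frac13\tau\sigma_N^2\sum_{k\in\Z_0^3}\chi_N^2(k)/|k|^2$, and that $\kappa_N\to\kappa:=\frac16\tau k_T^2>0$. Since \eqref{condition-covariance} asks for exact equality with a single constant $\kappa$, I would absorb the discrepancy by the harmless rescaling $\sigma_N^2\mapsto(\kappa/\kappa_N)\sigma_N^2$; the factor $\kappa/\kappa_N\to1$ is bounded away from $0$ and $\infty$, so it alters none of the limits below. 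After this step one may assume $Q_N(0)=2\kappa I_3$ for all $N$.

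The substantive point is then $\lim_N\|Q_N\|_{L^r}=0$, for which the key preliminary is $\sigma_N^2\to0$. Because $\theta\in L^1(\R^3)$ gives $|\widehat\theta|\le\|\theta\|_{L^1}$, identity \eqref{eq-Fourier-coefficients} yields the uniform bound $\chi_N(k)\le\|\theta\|_{L^1}$, while Lemma \ref{lem-conv-Fourier-coeff} gives $\chi_N^2(k)\to\|\theta\|_{L^1}^2$ for each $k$. Applying Fatou's lemma to the series and using the divergence of $\sum_{k\in\Z_+^3}|k|^{-2}$ in dimension three, I obtain $S_N:=\sum_{k\in\Z_+^3}\chi_N^2(k)/|k|^2\to\infty$; combined with the standing hypothesis $2\sigma_N^2 S_N\to k_T^2$, this forces $\sigma_N^2\to0$.

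With $\sigma_N^2\to0$ in hand, the $L^r$ estimates reproduce those of Example \ref{exa-noises}. Since $\big|\frac{k\otimes k}{|k|^2}\big|=1$ and $\mathrm{Tr}\big(\frac{k\otimes k}{|k|^2}\big)=1$, the $L^\infty$ bound $|Q_N(x)|\le\tau\sigma_N^2\sum_{k\in\Z_0^3}\chi_N^2(k)/|k|^2=6\kappa$ holds for all $x$. By Parseval, the Fourier coefficients of $Q_N$ being $\tau\sigma_N^2\frac{\chi_N^2(k)}{|k|^2}\frac{k\otimes k}{|k|^2}$,
\[
\|Q_N\|_{L^2}^2=(\tau\sigma_N^2)^2\sum_{k\in\Z_0^3}\frac{\chi_N^4(k)}{|k|^4}\le(\tau\sigma_N^2)^2\|\theta\|_{L^1}^4\sum_{k\in\Z_0^3}\frac1{|k|^4},
\]
and since $\sum_k|k|^{-4}<\infty$ in dimension three and $\sigma_N^2\to0$, I conclude $\|Q_N\|_{L^2}\to0$. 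Finally, for $r\in[1,2]$ one has $\|Q_N\|_{L^r}\le\|Q_N\|_{L^2}\to0$ on the finite-measure torus, while for $r\in(2,\infty)$ the interpolation $\|Q_N\|_{L^r}\le\|Q_N\|_{L^2}^{2/r}\|Q_N\|_{L^\infty}^{1-2/r}\le(6\kappa)^{1-2/r}\|Q_N\|_{L^2}^{2/r}\to0$ finishes the argument, exactly as in Example \ref{exa-noises}.

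I expect the main obstacle to be not the routine $L^r$ bookkeeping but the two conceptual points above: reconciling the mere convergence $Q_N(0)\to2\kappa I_3$ of Lemma \ref{lem-converg-covariance} with the exact normalization demanded by \eqref{condition-covariance}, and, more essentially, establishing $\sigma_N^2\to0$, where the three-dimensional lattice divergence $\sum_{k}|k|^{-2}=\infty$ (combined with Fatou's lemma) is what actually drives the decay.
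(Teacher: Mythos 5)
Your proof is correct and follows essentially the same route as the paper: reduce everything to $\sigma_N^2\to 0$, which you obtain via Fatou's lemma and the paper obtains by truncating the sum to $|k|\le M$ and letting $M\to\infty$ (the same idea in different packaging, both resting on the pointwise convergence $\chi_N(k)\to\|\theta\|_{L^1}$ of Lemma \ref{lem-conv-Fourier-coeff} and the divergence of $\sum_{k\in\mathbb{Z}_+^3}|k|^{-2}$), followed by the uniform bound $\chi_N(k)\le\|\theta\|_{L^1}$, an $L^2$ estimate, and $L^2$--$L^\infty$ interpolation exactly as in Example \ref{exa-noises}. One point where you are actually more careful than the paper: its proof simply asserts that the first condition in \eqref{condition-covariance} holds with $\kappa=\frac16\tau k_T^2$, even though Lemma \ref{lem-converg-covariance} only gives the convergence $Q_N(0)\to 2\kappa I_3$ rather than exact equality for each $N$; your rescaling $\sigma_N^2\mapsto(\kappa/\kappa_N)\sigma_N^2$, which preserves all the asymptotics, is a clean way to repair this discrepancy.
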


\begin{proof}
The first property is satisfied by taking $\kappa= \frac16 \tau k_{T}^{2}$. To show the second property, let
 $$ \Gamma^N_k= \sigma_N \frac{\chi_{N} \left(  k\right)  }{\left\vert k\right\vert}, \quad k\in \Z^3_0;$$
we only need to prove that $\|\Gamma^N \|_{\ell^\infty}\to 0$. Indeed, by \eqref{eq-Fourier-coefficients}, one has for any $k$,
  $$|\chi_{N}^2 ( k )|= \ell_{N}^{-1}\int_{\ell_{N}}^{2\ell_{N}} \big\vert \widehat\theta(-\ell k) \big\vert^{2} d\ell \le \|\theta \|_{L^1}^2, $$
thus $\|\Gamma^N \|_{\ell^\infty} \le \sigma_N \|\theta \|_{L^1}$. It remains to show that $\sigma_N\to 0$ as $N\to \infty$. By definition,
  $$\sigma_N \lesssim \Bigg(\sum_{k\in\mathbb{Z}_{+}^{3}} \frac{\chi_{N}^{2}\left(  k\right)  }{\left\vert k\right\vert ^{2}} \Bigg)^{-1/2}
  \le \Bigg(\sum_{k\in\mathbb{Z}_{+}^{3}, |k|\le M} \frac{\chi_{N}^{2}\left(  k\right)  }{\left\vert k\right\vert ^{2}} \Bigg)^{-1/2} $$
for any $M\ge 1$. By Lemma \ref{lem-conv-Fourier-coeff}, letting $N\to \infty$ in the above inequality yields
  $$\limsup_{N\to\infty}\sigma_N \lesssim \Bigg(\sum_{k\in\mathbb{Z}_{+}^{3}, |k|\le M} \frac{\|\theta \|_{L^1}^2}{\left\vert k\right\vert ^{2}} \Bigg)^{-1/2}= \|\theta \|_{L^1}^{-1} \Bigg(\sum_{k\in\mathbb{Z}_{+}^{3}, |k|\le M} \frac{1}{\left\vert k\right\vert ^{2}} \Bigg)^{-1/2}; $$
the last quantity vanishes as $M\to\infty$.
\end{proof}

\subsection{Remarks on collisions, turbulence and statistical mechanics}\label{subsect collisions}

A complicated open problem in the rigorous mathematical analysis of
interacting charged particles is whether a collision term should appear or not
in the kinetic description by a Vlasov type equation; and the consequences
that this fact may have on the statistical properties of particles. Let us
briefly discuss this topic, in order to illustrate the potential interest of
our result in this connection.

Charged particle interaction is long range, mediated by the electric and
magnetic fields, hence a mean field term must appear in the kinetic equation.
The difficult question is whether a collision term should also appear. Indeed,
the interaction kernel is singular at the origin, hence the pairwise
interaction of two very close particles is similar to a collision, or at least
more similar to a collision than to the infinitesimal interaction that
cumulates in a mean field way in the case of smooth long range interactions.
Therefore, some form of collision operator could appear, maybe depending on
the degree of sparseness of the system. In the regime considered in the
outstanding work of S. Serfaty \cite{Serfaty} (the first rigorous work
covering the true Coulomb potential), the collision operator does not arise,
the final equation is purely mean field; but this work adds the mean field
factor $\frac{1}{N}$ in front of the interactions, $N$ being the particle
number, factor that does not explicitly appear in a traditional physical model
of interacting ions and electrons.

When collisions exist, like in ordinary uncharged atoms or molecules of a
gas, in the macroscopic description of the particle density it is natural to
find a Laplacian describing particle diffusion. Each interaction, typically
very short range, produces a quite unpredictable (in the sense of very
unstable) change of direction of the motion of the particle. The accumulation
of these almost random changes leads to a Brownian behavior and to a diffusion
property at the density level. This is rigorously formalized in a number of
works, see in particular \cite{Bodine}.

For ions and electrons, we do not know whether ``Coulomb collisions'' (namely
the relatively strong change of direction of motion occurring when particles
are very close) may give rise to a true diffusive behavior.

Our result proves that a diffusive behavior occurs due to small scale
turbulence. We rigorously identify a set of assumptions under which one can
prove that diffusion (a Laplacian) arises in the phase density. It is
diffusion in velocity, namely a form of randomness in the velocity that may be
a surrogate of the result of collisions.

The diffusive behavior, in turn, may have consequences on statistical
mechanics properties. Uncharged particles usually satisfy a Maxwellian local
equilibrium, which quantifies the wide randomness of velocity directions of
particles, due in particular to collisions. In plasmas, the validity of the
Maxwellian distribution is controversial, since collisions are less efficient;
it is presumably more correct for electrons, possibly less for ions. The work
\cite{Ewart et al} and references quoted therein discuss this topic. The main
point of \cite{Ewart et al} is that turbulence acts as a substitute of
collisions, producing the randomness needed to establish a form of statistical
equilibrium, which however is not Maxwellian. We hope to clarify in future
works if the diffusion in velocity proved to exist in our work may have
consequences on the statistical equilibrium, like the one investigated in
\cite{Ewart et al}.

\bigskip

\noindent\textbf{Acknowledgments.} The research of the first author is funded by the European Union (ERC, NoisyFluid,
No. 101053472). The second author is grateful to the National Key R\&D Program
of China (No. 2020YFA0712700), the National Natural Science Foundation of
China (Nos. 12090010, 12090014). Views and opinions expressed are however those of
the authors only and do not necessarily reflect those of the European Union or
the European Research Council. Neither the European Union nor the granting
authority can be held responsible for them.

\end{document}